\DeclareMathOperator{\Aut}{Aut}
\DeclareMathOperator{\Gal}{Gal}
\DeclareMathOperator{\Ker}{Ker}
\DeclareMathOperator{\Inn}{Inn}
\DeclareMathOperator{\End}{End}
\DeclareMathOperator{\Mod}{Mod}
\DeclareMathOperator{\id}{id}
\DeclareMathOperator{\GL}{GL}
\DeclareMathOperator{\ab}{ab}
\DeclareMathOperator{\Out}{Out}
\DeclareMathOperator{\rec}{rec}
\DeclareMathOperator{\pf}{pf}
\theoremstyle{definition}
\newtheorem{dfn}{Definition}[section]
\newtheorem*{dfn*}{dfn}
\newtheorem{rem}[dfn]{Remark}
\theoremstyle{plain}
\newtheorem{thm}[dfn]{Theorem}
\newtheorem{thma}{Theorem}
\newtheorem*{thm*}{Theorem}
\newtheorem{prop}[dfn]{Proposition}
\newtheorem*{prop*}{Proposition}
\newtheorem{lem}[dfn]{Lemma}
\newtheorem*{lem*}{Lemma}
\newtheorem*{claim*}{Claim}
\newcommand{\Q}{\mathbb{Q}}
\newcommand{\Z}{\mathbb{Z}}
\title[Anabelian aspects of the outer automorphism groups]{Anabelian aspects of the outer automorphism groups of the absolute Galois groups of mixed-characteristic local fields}
\author{Kaiji Kondo}
\date{}
\subjclass[2020]{11S20, 11S31, 11F80} 
\keywords{anabelian geometry, mixed-characteristic local field, absolute Galois group, outer automorphism group, $p$-adic Hodge theory, mapping class group}
\address{Research Institute for Mathematical Sciences, Kyoto University, Kyoto 606-8502, JAPAN}
\email{kkondo@kurims.kyoto-u.ac.jp}
\begin{document}

\begin{abstract}
    In the present paper, we study the outer automorphism groups of the absolute Galois groups of mixed-characteristic local fields from the point of view of anabelian geometry.
    In particular, we show that, under certain mild assumptions, the image of the natural homomorphism from the automorphism group of a mixed-characteristic local field to the outer automorphism group of the associated absolute Galois group is not a normal subgroup.
    Furthermore, we show that, for the absolute Galois group of a mixed-characteristic local field satisfying certain assumptions, there exist a continuous representation and a continuous automorphism of the group such that the former is irreducible, abelian, and crystalline, but the continuous representation obtained as the composite of the former with the latter is not even Hodge-Tate.
    These results significantly generalize previous works by Hoshi and Nishio.
    A key observation in obtaining these results is to focus on the analogy between the mapping class groups of topological surfaces and the outer automorphism groups of the absolute Galois groups of mixed-characteristic local fields.
    To the best of the author's knowledge, this is the first work applying results from the theory of mapping class groups to the anabelian geometry of  mixed-characteristic local fields, going beyond a mere analogy between the two.
\end{abstract}

\maketitle

\tableofcontents

\section*{Introduction}

Let $p$ be a prime number, $k$ a finite extension of $\Q_{p}$, and $\overline{k}$ an algebraic closure of $k$.
We write $G_{k}\stackrel{\mathrm{def}}{=} \Gal(\overline{k}/k)$ for the absolute Galois group of $k$ determined by the algebraic closure $\overline{k}$ and $\Out(G_{k})$ for the group of outer automorphisms of the group $G_{k}$ [or, equivalently, the group
 of outer continuous automorphisms of the profinite group $G_{k}$ --- cf.\ \cite{3}, Proposition 3.3, $\rm(\hspace{.18em}i\hspace{.18em})$, $\rm(\hspace{.08em}ii\hspace{.08em})$].
In the present paper, we study the outer automorphism group $\Out(G_{k})$ from the point of view of anabelian geometry.

A natural and fundamental question is to what extent various kinds of data concerning a field $k$ can be recovered from the group $G_{k}$.
In particular, traditional approaches in anabelian geometry have focused on the question of whether the functor ``taking absolute Galois groups'' is fully faithful.
Unfortunately, this functor from the groupoid of fields isomorphic to finite extensions of $\Q_p$ equipped with algebraic closures to the groupoid of profinite groups is faithful but not full if $p$ is odd [cf., e.g., the discussion given at the final portion of \cite{6}, chapter Chapter V\hspace{-1.2pt}I\hspace{-1.2pt}I, \S5].
For example, if $p$ is an odd prime, and $k/\Q_{p}$ is Galois but not abelian, then there exists a pair $(k^{\prime}, \overline{k^{\prime}})$, where $k^{\prime}$ is a finite extension of $\Q_p$, and $\overline{k^{\prime}}$ is the algebraic closure of $k^{\prime}$, such that $k$ and $k^{\prime}$ are not isomorphic as fields, but the absolute Galois groups $G_k$ and $G_{k^{\prime}} \stackrel{\mathrm{def}}{=} \Gal(\overline{k^{\prime}}/k^{\prime})$ are isomorphic as [topological] groups [cf. \cite{2}, Theorem F, $\rm(\hspace{.18em}i\hspace{.18em})$].

Write $\Aut(k)$ for the group of automorphisms of the field $k$.
Then we have a natural injective homomorphism $\Aut(k) \hookrightarrow \Out(G_{k})$ of groups [cf., e.g., \cite{3}, Proposition 2.1].
In the present paper, let us regard $\Aut(k)$ as a [necessarily finite] subgroup of $\Out(G_{k})$ by means of this injective homomorphism:
\begin{align*}
    \Aut(k) \subset \Out(G_{k}).
\end{align*}
We refer to the subgroup as a \textit{field-theoretic subgroup}.
Here, we note that it is well-known [cf., e.g., the discussion given at the final portion of \cite{6}, Chapter V\hspace{-1.2pt}I\hspace{-1.2pt}I, \S5] that [although a similar equality always holds for a finite extension of $\Q$ by the Neukirch-Uchida theorem --- cf., e.g., \cite{6}, Chapter X\hspace{-1.2pt}I\hspace{-1.2pt}I, Corollary 12.2.2], in general, the equality $\Aut(k)=\Out(G_{k})$ does not hold.
Thus, we are interested in the difference between $\Aut(k)$ and $\Out(G_{k})$.

Let us recall that various ``ring-theoretic characterizations'' of the subgroup $\Aut(k) \subset \Out(G_{k})$ have been studied in works such as \cite{13}, \cite{7}, and \cite{4}.
\begin{thma}[\cite{13}, \cite{7}, \cite{4}]\label{characterization of Aut k}
   Let $\alpha$ be an automorphism of the topological group $G_{k}$.
   Then the following conditions are equivalent:
   \begin{enumerate}
       \item [(1)] The image of $\alpha$ by the natural homomorphism $\Aut(G_{k}) \to \Out(G_{k})$ lies in the subgroup $\Aut(k) \subset \Out(G_{k})$.
       \item [(2)] For every finite dimensional continuous representation $\rho \colon G_{k} \to \GL_{n}(\Q_{p})$ of $G_{k}$ that is Hodge-Tate, the composite $G_{k} \stackrel{\alpha}{\to} G_{k} \stackrel{\rho}{\to} \GL_{n}(\Q_{p})$ is Hodge-Tate.
       \item [(3)] The automorphism $\alpha$ is compatible with the filtration on $G_{k}$ given by the [positively indexed] higher ramification subgroups in the upper numbering.
       \item [(4)] There exists an automorphism of topological groups [but not necessarily of topological fields] $(\widehat{\overline{k}})_{+} \stackrel{\sim}{\longrightarrow} (\widehat{\overline{k}})_{+}$ --- where we write $\widehat{\overline{k}}$ for the $p$-adic completion of $\overline{k}$ and $(\widehat{\overline{k}})_{+}$ for the topological group obtained by forming the underlying additive module of $\widehat{\overline{k}}$ --- that is compatible with the natural action $G_{k} \curvearrowright \widehat{\overline{k}}$ [with respect to $\alpha$].
   \end{enumerate}
\end{thma}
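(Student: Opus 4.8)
The plan is to prove the equivalence by running the cycle $(1)\Rightarrow(4)\Rightarrow(2)\Rightarrow(3)\Rightarrow(1)$, in which the first two arrows are essentially formal, the last rests on a deep theorem of Mochizuki, and the genuine work lies in $(2)\Rightarrow(3)$. For $(3)\Rightarrow(1)$: by Mochizuki's local version of the Grothendieck conjecture, any automorphism of $G_{k}$ compatible with the upper-numbering ramification filtration is induced by a field automorphism, i.e.\ its class lies in $\Aut(k)$. For $(1)\Rightarrow(4)$: a field automorphism $\sigma$ of $k$ fixes $\Q_{p}$ and extends to an element $\tau\in\Aut(\overline{k}/\Q_{p})=G_{\Q_{p}}$ normalizing $G_{k}$; the hypothesis that the class of $\alpha$ lies in $\Aut(k)$ means, after modifying $\alpha$ by an inner automorphism (which affects neither its $\Out$-class nor the validity of (4)), that $\alpha$ is conjugation by such a $\tau$. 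Since $\tau$ acts isometrically for the $p$-adic absolute value, it extends to a continuous ring automorphism $\widehat{\tau}$ of $\widehat{\overline{k}}$, and the underlying additive automorphism of $\widehat{\tau}$ satisfies $\widehat{\tau}(g\cdot x)=(\tau g\tau^{-1})(\widehat{\tau}(x))=\alpha(g)(\widehat{\tau}(x))$, which is exactly (4).

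For $(4)\Rightarrow(2)$: the map $f$ of (4) is additive and bicontinuous, hence automatically $\Q_{p}$-linear, since it is $\Z[1/p]$-linear and $\Z[1/p]$ is dense in $\Q_{p}$. Replacing $g$ by $\alpha^{-1}(g)$ in the compatibility condition exhibits $f$ as a $G_{k}$-equivariant additive isomorphism from $\widehat{\overline{k}}$ with the $\alpha^{-1}$-twisted action onto $\widehat{\overline{k}}$ with its usual action. Now let $\rho$ be a Hodge-Tate representation on an $n$-dimensional $\Q_{p}$-vector space $V$. The substitution $h=\alpha(g)$ identifies the space of $G_{k}$-invariants of $\widehat{\overline{k}}\otimes_{\Q_{p}}V$ computed for $\rho\circ\alpha$ with that of the same tensor product in which $\widehat{\overline{k}}$ carries the $\alpha^{-1}$-twisted action; applying $f\otimes\id_{V}$ then yields an additive, hence $\Q_{p}$-linear, isomorphism of the latter onto $D_{\mathrm{HT}}(\rho):=(\widehat{\overline{k}}\otimes_{\Q_{p}}V)^{G_{k}}$. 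Both sides are $k$-vector spaces --- note $(\widehat{\overline{k}})^{G_{k}}=k$ also for the twisted action, as $\alpha^{-1}$ is surjective --- so comparing $\Q_{p}$-dimensions gives $\dim_{k}D_{\mathrm{HT}}(\rho\circ\alpha)=\dim_{k}D_{\mathrm{HT}}(\rho)=n$, whence $\rho\circ\alpha$ is Hodge-Tate. The only delicate point, that $f$ need not be $k$-linear, is precisely what is absorbed by passing from $k$-dimensions to $\Q_{p}$-dimensions.

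The remaining implication $(2)\Rightarrow(3)$ is the heart of the matter, and my approach would be to reconstruct the upper-numbering ramification filtration of $G_{k}$ from the collection of its Hodge-Tate representations. First I would note that, by Sen's theory, a $p$-adic representation is Hodge-Tate if and only if its Sen operator is semisimple with integer eigenvalues --- for abelian characters this is Serre's condition of being locally algebraic --- and that Hodge-Tateness of a representation of $G_{k}$ is insensitive to restriction to an open subgroup, so that (2) forces $\alpha$ to preserve Hodge-Tateness functorially on the whole lattice of open subgroups of $G_{k}$. Next, via local class field theory, for each open $H=G_{k'}$ the reciprocity map $\rec\colon\widehat{(k')^{\times}}\xrightarrow{\sim}H^{\mathrm{ab}}$ carries the filtration by higher unit groups $\{U_{k'}^{(i)}\}$ onto the upper-numbering ramification filtration on $H^{\mathrm{ab}}$, while a character of $H$ is locally algebraic exactly when, through $\rec$, it coincides on some $U_{k'}^{(i)}$ with an algebraic character of $\mathrm{Res}_{k'/\Q_{p}}\mathbb{G}_{m}$; thus the locally algebraic characters, together with their conductors, encode the jumps of the filtration. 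Finally I would propagate this information between levels using Herbrand's theorem (compatibility of upper-numbering filtrations under passage to open subgroups) and the Hasse-Arf theorem (integrality of the jumps on abelian extensions), anchoring the normalization by the fact that the cyclotomic character is Hodge-Tate of weight $1$, to conclude that $\alpha$ is compatible with the full $\R$-indexed upper-numbering filtration, which is (3).

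I expect the main obstacle to be exactly this last step: passing from ``$\alpha$ preserves the flag of ramification subgroups and the positions of their jumps'' to ``$\alpha$ is compatible with the precise $\R$-valued indexing of the filtration''. Merely knowing the filtration as a descending chain of subgroups is not enough --- this is, after all, why $\Aut(k)\subsetneq\Out(G_{k})$ in general --- so pinning down the numbering forces one to combine local class field theory, Serre's classification of locally algebraic abelian representations, the behaviour of conductors under norm maps, and Herbrand's and Hasse-Arf's theorems across all open subgroups of $G_{k}$ simultaneously, rather than via any single clean computation.
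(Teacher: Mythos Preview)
The paper does not itself prove this theorem: Theorem~A is stated in the Introduction purely as background, attributed to \cite{13}, \cite{7}, \cite{4}, and no argument is given anywhere in the paper. There is therefore no ``paper's own proof'' to compare against; the paper simply invokes these prior works of Mochizuki and Hoshi.

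That said, your cycle $(1)\Rightarrow(4)\Rightarrow(2)\Rightarrow(3)\Rightarrow(1)$ is essentially the route traced out by the cited literature --- $(3)\Rightarrow(1)$ is precisely the main theorem of \cite{13}, the equivalence with $(2)$ is the content of \cite{4}, and the characterization via the $G_{k}$-module $(\widehat{\overline{k}})_{+}$ appears in \cite{7} --- so your overall architecture is sound.

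There is, however, a genuine gap in your $(4)\Rightarrow(2)$. You set $D_{\mathrm{HT}}(\rho):=(\widehat{\overline{k}}\otimes_{\Q_{p}}V)^{G_{k}}$, but this is only the Hodge--Tate weight-$0$ piece; the Hodge--Tate condition is that $\dim_{k}(B_{\mathrm{HT}}\otimes_{\Q_{p}}V)^{G_{k}}=\dim_{\Q_{p}}V$ with $B_{\mathrm{HT}}=\bigoplus_{n\in\Z}\widehat{\overline{k}}(n)$, and your purely additive isomorphism $f$ says nothing a priori about the Tate twists $\widehat{\overline{k}}(n)$ for $n\neq 0$. The repair is to use one further ingredient: the $p$-adic cyclotomic character $\chi_{\mathrm{cyc}}$ of $G_{k}$ is itself group-theoretically reconstructible (this is part of the standard mono-anabelian toolkit, cf.\ \cite{3}, \cite{7}), so $\chi_{\mathrm{cyc}}\circ\alpha=\chi_{\mathrm{cyc}}$. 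Granting this, $f$ tensored with the identity on each $\Z_{p}(n)$ gives a $\Q_{p}[G_{k}]$-linear isomorphism on each graded piece $\widehat{\overline{k}}(n)$, and your dimension count then goes through weight by weight. Without this extra input the implication as you have written it does not close, since matching only the weight-$0$ invariants is far weaker than being Hodge--Tate.
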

As Theorem A shows, the elements of $\Aut(k)$ are characterized as outer automorphisms that preserve various ``ring-theoretic structures and properties'' such as the ramification filtration, the Hodge–Tate-ness of continuous $p$-adic representations, and the continuous $G_{k}$-module $(\widehat{\overline{k}})_{+}$.
However, Theorem A does not reveal the difference between $\Out(G_{k})$ and $\Aut(k)$.

As a certain refinement of some observations concerning the implication $(2) \Rightarrow (1)$ of Theorem A, the following theorem was proved in \cite{11}.

\begin{thma}[\cite{11}]
    Suppose that $p$ is an odd prime number, and that $k/\Q_{p}$ is an abelian extension of even degree $d$.
     Then there exist a $\Q_{p}$-vector space $V$ of dimension $d$, a continuous representation $\rho \colon G_{k}\to \Aut_{\Q_{p}}(V)$ that is irreducible, abelian, and crystalline [hence also Hodge-Tate], and a group automorphism $\varphi \colon G_{k} \stackrel{\sim}{\longrightarrow} G_{k}$ such that
     $\rho \circ \varphi$ is not Hodge-Tate.
\end{thma}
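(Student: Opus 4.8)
The plan is to take $\rho$ to be the $\Q_p$-linearization of a Lubin--Tate character of $k$ and to obtain $\varphi$ from the size of $\Aut(G_k)$ on the metabelian level. Fix a uniformizer $\pi$ of $k$ and an embedding $\iota\colon k\hookrightarrow\overline{k}$; let $\chi_\pi\colon G_k\twoheadrightarrow\mathcal{O}_k^\times$ be the associated Lubin--Tate character, set $V\stackrel{\mathrm{def}}{=}k$ (a $\Q_p$-vector space of dimension $d$), and let $\rho\colon G_k\to\Aut_{\Q_p}(V)$ be $\chi_\pi$ followed by the multiplication action $\mathcal{O}_k^\times\hookrightarrow k^\times\hookrightarrow\Aut_{\Q_p}(k)$. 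First I would verify the three required properties of $\rho$: it is abelian because it factors through $\chi_\pi$; it is crystalline, with Hodge--Tate weight $1$ along $\iota$ and $0$ along the remaining $d-1$ embeddings, by the $p$-adic Hodge theory of the Lubin--Tate formal group; and it is irreducible because $\chi_\pi(G_k)=\mathcal{O}_k^\times$ spans $k$ over $\Q_p$, so any nonzero $G_k$-stable $\Q_p$-subspace of $k$ is a nonzero $k$-submodule, hence equal to $k$.

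Next I would translate the non-Hodge--Tate assertion into a statement about $G_k^{\ab}$. Since $\rho\circ\varphi$ is again abelian, local class field theory identifies it with the $\Q_p$-linearization of the character $\widetilde{\chi_\pi}\circ f$ of $k^\times$, where $\widetilde{\chi_\pi}\stackrel{\mathrm{def}}{=}\chi_\pi\circ\rec_k$ and $f\in\Aut(\widehat{k^\times})$ is the automorphism induced by $\varphi$ via $\rec_k\colon\widehat{k^\times}\xrightarrow{\sim}G_k^{\ab}$. For a one-dimensional representation Hodge--Tate is equivalent to de Rham, hence to local algebraicity: the character must agree on an open subgroup $U\subset\mathcal{O}_k^\times$ with $u\mapsto\prod_\tau\tau(u)^{n_\tau}$ for some integers $n_\tau$ indexed by the embeddings $\tau\colon k\hookrightarrow\overline{k}$. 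As $\widetilde{\chi_\pi}(u)=u^{-1}$ on $\mathcal{O}_k^\times$ (for a suitable normalization), writing $f$ on an open pro-$p$ subgroup of $\mathcal{O}_k^\times$ in logarithmic coordinates as a $\Q_p$-linear automorphism $A$ of $k$ and expanding $\iota\circ A=\sum_\tau c_\tau\,\tau$ in the basis $\{\tau\}$ of $\Hom_{\Q_p}(k,\overline{k})$, one checks that $\rho\circ\varphi$ is Hodge--Tate if and only if every $c_\tau$ lies in $\Z$. It therefore suffices to produce $\varphi\in\Aut(G_k)$ whose induced $A$ violates this integrality --- for instance one inducing on $U$ the map $u\mapsto u^a$ with $a\in\Z_p^\times\setminus\Z$, or multiplication by some $\lambda\in\mathcal{O}_k^\times\setminus\Z_p$; that such an $A$ is genuinely ``non-algebraic'' in this sense uses only that $d\ge2$, so $k\ne\Q_p$.

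The crux --- and the step I expect to be the main obstacle --- is the construction of such a $\varphi$, i.e. showing that the image of the natural map $\Aut(G_k)\to\Aut(G_k^{\ab})$ is large enough to contain an automorphism that is not algebraic on the pro-$p$ part of $\mathcal{O}_k^\times$. Here I would combine the explicit presentation of $G_k$ valid for odd $p$ (Jannsen--Wingberg) with the mapping-class-group-theoretic input that is the main novelty of the paper: exactly as the Dehn--Nielsen--Baer picture exhibits a large group --- the full symplectic group, through the Johnson-type filtration --- inside the outer automorphism group of a surface group, one shows that $\Out(G_k)$ surjects onto a correspondingly large subgroup of $\Aut(G_k^{\ab})$, namely (roughly) the automorphisms preserving the Tate-duality pairing on the relevant rank-$d$ quotient, a pairing which is alternating and perfect precisely when $d$ is even --- which is where that hypothesis enters. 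A generic element of this group has a non-integral entry in the $\iota$-row once expressed in the embedding basis, and pulling it back to an automorphism of $G_k$ yields the required $\varphi$; combined with the first two steps this completes the proof. (One should also note that such a $\varphi$ is automatically not field-theoretic, since by Theorem A field-theoretic automorphisms preserve Hodge--Tate-ness, and that the various finite-level choices --- the torsion of $\mathcal{O}_k^\times$, the subgroup $U$, the passage between $\widehat{k^\times}$ and $k^\times$ --- are harmless, local algebraicity being a condition on a neighbourhood of $1$.)
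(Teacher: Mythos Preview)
Your first two steps --- taking $\rho$ to be the $\Q_p$-linearization of the Lubin--Tate character and reducing ``$\rho\circ\varphi$ not Hodge--Tate'' to a local-algebraicity condition on an open subgroup of $\mathcal{O}_k^\times$ --- are exactly what both \cite{11} and the present paper do (the latter via \cite{11}, Lemma 1.9, in the proof of Theorem~\ref{Aut-intristic Hodge-Tate}). So the setup is right.

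The gap is in the construction of $\varphi$, and your explanation of where ``$d$ even'' enters is not correct. There is no ``Tate-duality pairing on the relevant rank-$d$ quotient'' that is alternating and perfect precisely when $d$ is even; the symplectic form in the mapping-class-group analogy lives on a rank-$2g$ piece with $2g=d-2$ (even $d$) or $2g=d-1$ (odd $d$), it is symplectic in \emph{both} parities, and it does not by itself separate the two cases. Nor does either proof establish a surjection of $\Out(G_k)$ onto a large symplectic-type group acting on $G_k^{\ab}$ and then pick a ``generic'' element; one constructs a single explicit $\varphi$. In the present paper (Theorem~\ref{Aut-intristic Hodge-Tate}, via Lemma~\ref{O_{k^{(d=1)}}} and Theorem~\ref{determination of Ker(Tr)}): for even $d$ the Jannsen--Wingberg relation reads $\sigma x_0\sigma^{-1}=(x_0')^t x_1^{p^s}[x_1,x_2][x_3,x_4]\cdots$, so the ``Dehn twist'' $\varphi(x_2)=x_2x_1$ (other generators fixed) is a well-defined automorphism of $G_k$; one then proves $\Ker(\Tr_{k/\Q_p})=\langle y_1,y_3,\dots,y_d\rangle_{\Q_p}$, so the line $(\Q_p)_+\subset k_+$ has nonzero $y_2$-component, $\varphi_+$ moves it, and $\varphi^\times(\Z_p^\times)\cap\Z_p^\times$ is not open. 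The parity hypothesis enters because for odd $d$ the relation contains the uncontrolled word $x_1'\in\overline{\langle\sigma,\tau,x_1\rangle}$ and no automorphism moving $y_1$ is known (Remark~\ref{odd case}) --- not because of any pairing. The original argument in \cite{11} for Theorem~B is different again and uses no mapping-class-group input: the \emph{abelian} hypothesis furnishes a quadratic intermediate field $k'$ with $G_k\subset G_{k'}$ characteristic, one reduces to $[k':\Q_p]=2$, and there $\varphi$ is written down directly. Your proposal recovers neither route, and even granting a large image in $\Aut(G_k^{\ab})$, checking the ``non-integral $c_\tau$'' condition still requires controlling the change of basis from the $y_i$ to the embeddings --- which is precisely the content of Theorem~\ref{determination of Ker(Tr)} that you have not supplied.
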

In the present paper, we generalize this result to the case where the Galois extension $k/\Q_{p}$ is not necessarily abelian.
That is to say, we obtain the following theorem.
\begin{thma}
   Suppose that $p$ is an odd prime number, and that $k/\Q_{p}$ is a finite Galois extension of even degree $d$.
     Then there exist a $\Q_{p}$-vector space $V$ of dimension $d$, a continuous representation $\rho \colon G_{k}\to \Aut_{\Q_{p}}(V)$ that is irreducible, abelian, and crystalline [hence also Hodge-Tate], and a group automorphism $\varphi \colon G_{k} \stackrel{\sim}{\longrightarrow} G_{k}$ such that
     $\rho \circ \varphi$ is not Hodge-Tate.
\end{thma}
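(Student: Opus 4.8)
The plan is to reduce the statement to constructing two ingredients --- an irreducible, abelian, crystalline $\Q_{p}$-representation $\rho$ of $G_{k}$ of dimension exactly $d=[k:\Q_{p}]$, and a continuous automorphism $\varphi$ of $G_{k}$ --- and then to show that $\rho\circ\varphi$ fails to be Hodge-Tate by an argument about local algebraicity. Let me first note that one cannot simply transport Theorem B across an abstract isomorphism $G_{k'}\xrightarrow{\sim}G_{k}$ with $k'/\Q_{p}$ abelian: such an isomorphism need not preserve Hodge-Tate-ness (this is, in essence, the content of these results), so the construction must be carried out for the given $k$ directly. For the representation, recall that local class field theory gives $G_{k}^{\ab}\cong\widehat{k^{\times}}$, that the image of the inertia subgroup $I_{k}\subset G_{k}$ is the unit part $\mathcal{O}_{k}^{\times}\subset\widehat{k^{\times}}$, and that $\widehat{k^{\times}}\cong\mathcal{O}_{k}^{\times}\times\widehat{\Z}$ after a choice of uniformizer. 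Take $V=k$, regarded as a $d$-dimensional $\Q_{p}$-vector space, and let $\rho$ be the composite $G_{k}\twoheadrightarrow G_{k}^{\ab}\xrightarrow{\chi}\mathcal{O}_{k}^{\times}\hookrightarrow k^{\times}=\Aut_{k}(k)\subset\Aut_{\Q_{p}}(V)$, where $\chi$ is a Lubin-Tate character of $k$. Then $\rho$ is abelian by construction, crystalline because $\chi$ is, and irreducible over $\Q_{p}$: since $\chi$ has open image in $\mathcal{O}_{k}^{\times}$, its image spans $k$ as a $\Q_{p}$-algebra, so no proper $\Q_{p}$-subspace of $V$ is $\rho$-stable. (One could also invoke the existence of such a $\rho$ directly, as in Theorem B.)

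The heart of the matter, and the place where the analogy with mapping class groups enters, is the construction of $\varphi$. An automorphism $\varphi$ of $G_{k}$ induces an automorphism $\overline{\varphi}$ of $G_{k}^{\ab}=\widehat{k^{\times}}$, and by Theorem A, (3), such a $\varphi$ is \emph{not} field-theoretic as soon as $\overline{\varphi}$ fails to respect the filtration of $\mathcal{O}_{k}^{\times}$ by the higher unit groups $1+\mathfrak{m}_{k}^{i}$. The goal is to find $\varphi\in\Aut(G_{k})$ for which $\overline{\varphi}$ restricts, on a finite-index subgroup of the principal units $1+\mathfrak{m}_{k}$, to a $\Q_{p}$-linear automorphism of the $\Q_{p}$-vector space $k$ (via the $p$-adic logarithm) lying outside the ``algebraic'' subgroup $k^{\times}\rtimes\Gal(k/\Q_{p})\subset\GL_{\Q_{p}}(k)$; concretely, one not preserving the eigenspace decomposition of $k\otimes_{\Q_{p}}\overline{\Q_{p}}$ indexed by the embeddings $k\hookrightarrow\overline{\Q_{p}}$. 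To exhibit such a $\varphi$, one uses the Jannsen-Wingberg presentation of $G_{k}$ (available since $p$ is odd) to identify the constraints on lifting an automorphism of $G_{k}^{\ab}$ to $G_{k}$: any such lifting must, up to the induced action on $H^{2}$, be compatible with the cup-product pairing on $H^{1}(G_{k},\F_{p})$ with values in $H^{2}(G_{k},\mu_{p})\cong\F_{p}$ --- the local Tate / Hilbert-symbol pairing, which for $p$ odd is a perfect alternating form. Under the dictionary ``$H^{1}(G_{k})\leftrightarrow H_{1}$ of a surface, cup product $\leftrightarrow$ intersection form, $\Out(G_{k})\leftrightarrow$ mapping class group'', the realization theorem for the symplectic representation of the mapping class group (together with the explicit description of Dehn twists and their effect on homology) shows that a large class of pairing-compatible automorphisms of $\widehat{k^{\times}}$ --- in particular ones acting by suitable symplectic matrices that genuinely mix distinct graded pieces of the unit filtration --- do lift to $\Aut(G_{k})$. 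The even-degree hypothesis is what is needed for this realization step to produce an automorphism that is simultaneously non-field-theoretic and non-algebraic on $1+\mathfrak{m}_{k}$: it governs the parity of the relevant $\F_{p}$-spaces, hence the structure of the symplectic-type datum to be realized. The hypothesis that $k/\Q_{p}$ is Galois supplies the $\Gal(k/\Q_{p})$-symmetry of $G_{k}$ and of $\widehat{k^{\times}}$ through which the abelian case (Theorem B) is leveraged for arbitrary Galois $k$.

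It remains to verify that $\rho\circ\varphi$ is not Hodge-Tate. Since $\rho$ factors through $G_{k}^{\ab}$, so does $\rho\circ\varphi$, and $\rho\circ\varphi=\overline{\rho}\circ\overline{\varphi}$, where $\overline{\rho}\colon\widehat{k^{\times}}\to\Aut_{\Q_{p}}(V)$ sends $u$ to multiplication by $\chi(u)$; in particular $\overline{\rho}$ kills the valuation part and, on $1+\mathfrak{m}_{k}$, agrees up to an isogeny with the standard character of the torus $\mathrm{Res}_{k/\Q_{p}}\mathbb{G}_{m}$. Now apply the criterion underlying the equivalence $(1)\Leftrightarrow(2)$ of Theorem A (a theorem of Serre, via Sen's theory): an abelian $p$-adic representation of $G_{k}$ is Hodge-Tate if and only if it is locally algebraic, i.e.\ its restriction to some open subgroup of $\mathcal{O}_{k}^{\times}\subset G_{k}^{\ab}$ is the restriction of an algebraic character of $\mathrm{Res}_{k/\Q_{p}}\mathbb{G}_{m}$. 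By construction the restriction of $\overline{\rho}\circ\overline{\varphi}$ to every open subgroup of $1+\mathfrak{m}_{k}$ is the standard character precomposed with a fixed non-algebraic $\Q_{p}$-linear automorphism of $k$; its Sen operator is therefore not the differential of any algebraic character, so $\rho\circ\varphi$ is not locally algebraic and hence not Hodge-Tate. The only genuinely difficult step is the construction of $\varphi$ above: showing that the desired filtration-violating, non-algebraic automorphism of $\widehat{k^{\times}}$ actually lifts to $\Aut(G_{k})$ when $k/\Q_{p}$ is Galois but not abelian --- where $G_{k}$ is far more rigid than in the abelian setting --- is exactly what forces one to import realization and generation results from the theory of mapping class groups, and it is here that the even-degree hypothesis is consumed. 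A secondary point requiring care is to check that the failure of local algebraicity persists after passing to every open subgroup of inertia, so that $\rho\circ\varphi$ is honestly not even Hodge-Tate, not merely non-crystalline.
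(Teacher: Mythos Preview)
Your choice of $\rho$ (the Lubin--Tate character viewed as a $d$-dimensional $\Q_{p}$-representation) and your appeal to the local-algebraicity criterion for non-Hodge--Tate-ness match the paper exactly. The gap is in your construction of $\varphi$, which misidentifies where the difficulty lies. Producing automorphisms of $G_{k}$ is \emph{not} a lifting problem: from the Jannsen--Wingberg presentation (Theorem D) one may simply declare $\varphi$ to fix $\sigma,\tau,x_{0},x_{1},x_{3},\ldots,x_{d}$ and send $x_{2}\mapsto x_{2}x_{1}$; the single defining relation is visibly preserved, so this is a well-defined automorphism of $G_{k}$. What is genuinely hard is to determine the effect of such a $\varphi$ on the \emph{arithmetic} line $(\Q_{p})_{+}\subset k_{+}$, since the basis $y_{1},\ldots,y_{d}$ of $k_{+}$ induced by $x_{1},\ldots,x_{d}$ is purely group-theoretic and bears, a priori, no relation whatsoever to the ring structure of $k$. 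The paper's key technical input---which your outline does not identify---is Theorem E: for $d$ even, $\Ker(\Tr_{k/\Q_{p}})=\langle y_{1},y_{3},\ldots,y_{d}\rangle_{\Q_{p}}$. This forces any generator of $(\Q_{p})_{+}$ to have nonzero $y_{2}$-component, whence the explicit $\varphi$ above (a single ``Dehn twist'') satisfies $\varphi_{+}((\Q_{p})_{+})\neq(\Q_{p})_{+}$, which via the local-algebraicity argument gives non-Hodge--Tate-ness of $\rho\circ\varphi$. The Dehn-twist automorphisms $\varphi_{i},\varphi_{i}',\varphi_{i}''$ are used not to realize symplectic matrices or to lift anything, but to \emph{prove Theorem E}: one exploits the $\Out(G_{k})$-invariance of $\Ker(\Tr_{k/\Q_{p}})$ (which is group-theoretically reconstructible) and applies the twists to force each $y_{j}$ with $j\neq 2$ into the kernel.

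Two further points. Your proposed route via the cup product on $H^{1}(G_{k},\F_{p})$ does not work as stated: that pairing lands in $H^{2}(G_{k},\F_{p})$, which vanishes unless $\mu_{p}\subset k$, a hypothesis the theorem does not impose. And your account of where the even-degree hypothesis is consumed is off. It enters through the \emph{shape of the Jannsen--Wingberg relation}: for even $d$ the relation contains $[x_{1},x_{2}]$, so the twist $x_{2}\mapsto x_{2}x_{1}$ is available and acts nontrivially on the $y_{2}$-direction complementary to $\Ker(\Tr_{k/\Q_{p}})$; for odd $d$ the relation contains instead $[x_{1},x_{1}']$ with $x_{1}'\in\overline{\langle\sigma,\tau,x_{1}\rangle}$, Theorem E gives $\Ker(\Tr_{k/\Q_{p}})=\langle y_{2},\ldots,y_{d}\rangle_{\Q_{p}}$, and no automorphism moving $y_{1}$ is known---this, rather than any parity of $\F_{p}$-cohomology, is why the paper restricts to even degree.
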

At the time of writing, it is not clear to the author whether or not a similar assertion to Theorem C in the case where $k/\Q_{p}$ is a Galois extension of odd degree holds.
We will discuss the reason why the proof of Theorem C does not work in the case where the extension $k/\Q_{p}$ is a Galois extension of odd degree in Remark \ref{odd case}.

Here, we review an outline of the proof of Theorem B and recall the technical difficulties to generalize the argument to the situation of Theorem C.

To prove Theorem B, it is important to show that there exists an automorphism $\varphi$ of $G_k$ such that the induced automorphism $\varphi^{\times}$ of $k^{\times}$, which is obtained via the mono-anabelian reconstruction algorithm, does not preserve the subgroup $\Z_p^{\times} \subset k^{\times}$ of $k^{\times}$.
When $k/\Q_p$ is a quadratic extension, we construct such an automorphism of $G_k$ explicitly by applying the techniques developed in \cite{1} [cf.\ Theorem D below].

When $k/\Q_p$ is a finite abelian extension of even degree, the desired existence reduces to the case where $k/\Q_p$ is a quadratic extension in the following way.
If $ k/\Q_p$ is a finite abelian extension of even degree, then there exists a quadratic intermediate field $k^{\prime}$ of $k/\Q_p$ such that
\begin{align*}
    G_k \subset G_{k^{\prime}} \stackrel{\mathrm{def}}{=} \Gal(\overline{k}/k^{\prime})
\end{align*}
is a characteristic subgroup [cf.\ \cite{1}, Lemma 2.6, $\rm(\hspace{.18em}i\hspace{.18em})$].
Thus, we obtain an automorphism $\varphi^{\prime}$ of $G_{k^{\prime}}$ such that $(\varphi^{\prime})^{\times}$ does not preserve the subgroup $\Z_{p}^{\times} \subset (k^{\prime})^{\times}$ of $(k^{\prime})^{\times}$.
Moreover, the restriction $\varphi$ to $G_{k}$ of the automorphism $\varphi^{\prime}$ satisfies the desired property.

However, the above problem can no longer be reduced to the quadratic case in the case where $ k/\Q_p$ is a Galois extension of even degree that is not abelian.
Moreover, the action of $\Aut(G_k)$ on the $\Q_{p}$-vector space $k_+$ obtained by forming the underlying additive module of the field $k$ induced by the mono-anabelian reconstruction algorithm may be complicated.
A large portion of \S1 of the present paper is devoted to overcoming this difficulty.
\\\par
Let us make some preparations to state a key result for the generalization.
The following theorem is fundamental in the study of the outer automorphism groups of the absolute Galois groups of mixed-characteristic local fields.
\begin{thma}[\cite{6}]
    Suppose that $p$ is an odd prime number, and that $k/\Q_{p}$ is a finite extension of degree $d$ and of residue degree $f$.
   Then there exist $\sigma$, $\tau$, $x_{0}, \ldots, x_{d}\in G_{k}$, positive integers $s$, $t$, an element $x_{0}^{\prime} \in \overline{\langle\tau, x_{0} \rangle}$, and an element $x_{1}^{\prime} \in \overline{\langle \sigma,\tau,x_{1}\rangle}$, where ``$\overline{\langle S \rangle}$'' denotes the closed subgroup of $G_{k}$ topologically generated by $S$, such that the following conditions hold:
    \begin{enumerate}
    \item [(1)] $G_{k}$ is presented as the profinite group topologically generated by $\sigma$, $\tau$, $x_{0}, \ldots, x_{d}\in G_{k}$ and subject to the relations described in the conditions (3) and (4) below.
  \item [(2)] The wild inertia subgroup $P_{k}$ of $G_{k}$ is topologically normally generated by $x_{0},\ldots,x_{d}$.
  \item [(3)] The elements $\sigma$, $\tau$ satisfy the relation $\sigma \tau \sigma^{-1}=\tau^{p^{f}}$.
  \item [(4)] In addition, the generators satisfy one further relation: 
   \begin{enumerate}
    \item for even $d$,
    \begin{align*}
        \sigma x_{0} \sigma^{-1}=(x_{0}^{\prime})^{t} x_{1}^{p^s}[x_{1},x_{2}][x_{3},x_{4}]\cdots[x_{d-1},x_{d}];
    \end{align*}
    \item for odd $d$, 
    \begin{align*}
         \sigma x_{0} \sigma^{-1}=(x_{0}^{\prime})^{t} x_{1}^{p^s}[x_{1},x_{1}^{\prime}][x_{2},x_{3}]\cdots[x_{d-1},x_{d}].
    \end{align*}
  \end{enumerate}
 
\end{enumerate}
    \end{thma}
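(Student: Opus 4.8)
The plan is to recover the Jannsen--Wingberg presentation of $G_{k}$ (the substance of \cite{6}, Chapter VII, \S5): build $G_{k}$ as an extension of its tame quotient by its wild inertia subgroup, describe each of these two pieces separately, and then match Galois cohomology to force the single remaining relation into the stated shape. First I would record the exact sequence
\[
1 \longrightarrow P_{k} \longrightarrow G_{k} \longrightarrow G_{k}/P_{k} \longrightarrow 1 ,
\]
where $P_{k}$ is the wild inertia subgroup. The quotient $G_{k}/P_{k}$ is the Galois group of the maximal tamely ramified extension of $k$; it is topologically generated by a Frobenius lift $\sigma$ and a generator $\tau$ of tame inertia subject to $\sigma\tau\sigma^{-1}=\tau^{p^{f}}$, which produces the generators $\sigma,\tau$ and the relation in (3). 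Since $k^{\mathrm{ur}}$ is quasi-algebraically closed, $\mathrm{cd}_{p}(P_{k})=1$, so $P_{k}$ is a free pro-$p$ group; hence, as a closed normal subgroup of $G_{k}$, it is controlled by the $G_{k}/P_{k}$-module $P_{k}^{\ab}$, and one checks that it is topologically normally generated in $G_{k}$ by $d+1$ elements $x_{0},\dots,x_{d}$, which gives (2).

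To pin down the relations I would compute $H^{\bullet}(G_{k},\F_{p})$: local class field theory computes $\dim_{\F_{p}}H^{1}(G_{k},\F_{p})$ from $k^{\times}/(k^{\times})^{p}$, and local Tate duality gives $\mathrm{cd}_{p}(G_{k})=2$, $H^{2}(G_{k},\F_{p})\cong\mu_{p}(k)^{\vee}$, and a non-degenerate cup-product pairing on $H^{1}$. Feeding this into the Demushkin--Serre--Labute classification identifies the maximal pro-$p$ quotient $G_{k}(p)$: it is free pro-$p$ of rank $d+1$ when $\mu_{p}\not\subseteq k$, and a Demushkin group of rank $d+2$ with the known invariant --- governed by the largest $s$ with $\mu_{p^{s}}\subseteq k$ --- when $\mu_{p}\subseteq k$. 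This is where the exponent $p^{s}$ and the ``Demushkin tail'' $[x_{1},x_{2}][x_{3},x_{4}]\cdots[x_{d-1},x_{d}]$ (resp.\ its odd-$d$ analogue, involving $x_{1}'$) of relation (4) come from.

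With these ingredients the strategy is to prove that the profinite group $\widetilde{G}$ presented by the asserted generators and relations is isomorphic to $G_{k}$: check that $\widetilde{G}$ has the correct tame quotient; that the closure of the normal subgroup generated by the images of $x_{0},\dots,x_{d}$ is free pro-$p$ with the right module structure on its abelianization, so that reducing relation (4) modulo commutators exhibits $(x_{0}')^{t}x_{1}^{p^{s}}$ as recording the $\sigma$-action on $P_{k}^{\ab}$, with $x_{0}'\in\overline{\langle\tau,x_{0}\rangle}$ and $x_{1}'\in\overline{\langle\sigma,\tau,x_{1}\rangle}$; that $\widetilde{G}$ has the same $\F_{\ell}$-cohomology and the same maximal pro-$\ell$ quotient as $G_{k}$ for every prime $\ell$ --- for $\ell\neq p$ this is immediate from the tame relation, for $\ell=p$ it is the Demushkin/free pro-$p$ group above --- and then invoke a rigidity principle to the effect that a profinite group of cohomological dimension $2$ with the prescribed tame quotient, wild inertia, and cohomological duality is unique up to isomorphism, so that $\widetilde{G}\cong G_{k}$. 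The main obstacle is exactly this last step: the simultaneous bookkeeping that fixes $x_{0}',x_{1}',s,t$ and verifies the rigidity statement requires the full $p$-adic Hodge/cyclotomic input --- in particular the ramification of $\Q_{p}(\mu_{p^{\infty}})/\Q_{p}$ and the values of the cyclotomic character on decomposition and inertia --- and it is precisely this that makes the Jannsen--Wingberg theorem delicate; accordingly I would cite it in the form stated rather than reprove it here.
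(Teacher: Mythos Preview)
Your proposal is correct and lands in the same place as the paper: both treat this as a black-box citation. The paper's proof of the corresponding statement (Theorem~\ref{generator and relation of group of MLF-type}) is a single sentence invoking \cite{6}, Theorem~7.5.14, together with \cite{3}, Proposition~3.6, with no further argument. Your expository sketch of the Jannsen--Wingberg strategy (tame/wild decomposition, Demushkin classification of $G_k(p)$, cohomological rigidity) is accurate and helpful context, but since you too conclude by citing the result rather than reproving it, the two approaches are effectively identical.
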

We write $\widehat{k^{\times}}$ for the profinite completion of the multiplicative group of $k$ and $k_{+}$ for the $\Q_{p}$-vector space obtained by forming the underlying additive module of $k$.
When $p$ is odd, we fix topological generators $\sigma$, $\tau$, $x_{0}, \ldots, x_{d}$ of $G_{k}$ as in Theorem D.
Let $F \colon G_{k} \to k_{+}$ be the homomorphism obtained as the composite of the natural surjection $G_{k} \twoheadrightarrow G_{k}^{\ab}$, the inverse of the isomorphism induced by the local reciprocity map $\widehat{k^{\times}} \stackrel{\sim}{\longrightarrow} G_{k}^{\ab}$, the projection $\widehat{k^{\times}} \twoheadrightarrow \mathcal{O}_{k}^{\times}$ determined by a choice of a uniformizer of $\mathcal{O}_{k}$, and the $p$-adic logarithm $\log_{k} \colon \mathcal{O}_{k}^{\times} \to k_{+}$.
Moreover, if $p$ is odd, then, for each $i \in \{1, 2, \ldots, d\}$, we write $y_i \in k_{+}$ for the image of $x_i$ by the map $F$.
Note that, when $d>1$ and $p$ is odd, the $d$ elements $y_{1}, \ldots, y_{d}$ form a basis of the $\Q_{p}$-vector space $k_{+}$ [cf.\ \cite{1}, Lemma 1.2; \cite{1}, Lemma 1.3; \cite{3}, Proposition 3.11, $\rm(i\hspace{-.08em}v\hspace{-.06em})$].

We obtain the following result by considering the mono-anabelian reconstruction algorithm, together with the ``Dehn twists'' in $\Out(G_{k})$ [cf., e.g., \cite{14}, \cite{20}].
This result is interesting in its own right and, moreover, plays an important role in the proof of Theorem C.
\begin{thma}
    Suppose that the finite extension $k/\Q_{p}$ is of degree $d>1$, and that $p$ is odd.
    We write $\Tr_{k/\Q_{p}} \colon k_{+} \to (\Q_{p})_{+}$ for the trace map of the extension $k/\Q_{p}$.
    Then we obtain the following equality of sub-$\Q_{p}$-vector spaces of $k_{+}$:
    \begin{align*}
    \Ker(\Tr_{k/\Q_{p}}) = \left\{
    \begin{aligned}
    &\langle y_{1},y_{3},\ldots,y_{d} \rangle_{\Q_{p}}\ (d : even)\\
    &\langle y_{2},y_{3},\ldots,y_{d} \rangle_{\Q_{p}}\  (d : odd),
    \end{aligned}
    \right.\ 
    \end{align*}
    where ``$\langle S \rangle_{\Q_{p}}$'' denotes the sub-$\Q_{p}$-vector space of $k_{+}$ generated by $S$.
\end{thma}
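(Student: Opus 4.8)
The plan is to reduce the assertion, via a dimension count and local class field theory, to determining the image of each generator $x_i$ under the $p$-adic cyclotomic character of $k$, and then to extract that information from the relation of Theorem D together with the construction of the generators.

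First, both sides are $\Q_p$-subspaces of $k_+$ of dimension $d-1$ --- the left-hand side because $k/\Q_p$ is separable, so $\Tr_{k/\Q_p}$ is a nonzero $\Q_p$-linear functional; the right-hand side because $y_1,\dots,y_d$ is a basis of $k_+$ and exactly one of these vectors is omitted --- so it suffices to show that $\Tr_{k/\Q_p}(y_i)=0$ for each $i$ occurring on the right-hand side.

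Second, I would rewrite this condition class-field-theoretically. The $p$-adic logarithm satisfies $\Tr_{k/\Q_p}\circ\log_k=\log_{\Q_p}\circ N_{k/\Q_p}$ on $\mathcal{O}_k^\times$; by functoriality of the reciprocity map, $N_{k/\Q_p}$ corresponds to the restriction $G_k^{\ab}\to G_{\Q_p}^{\ab}$; and any $x\in P_k$ has $\rec_k^{-1}(\overline{x})\in 1+\mathfrak{m}_k$ (the pro-$p$ part of $\mathcal{O}_k^\times$), so that $N_{k/\Q_p}(\rec_k^{-1}(\overline{x}))\in 1+p\Z_p$, a subgroup on which $\log_{\Q_p}$ is injective because $p$ is odd. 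Putting these together, for any $x\in P_k$ the equality $\Tr_{k/\Q_p}(F(x))=0$ is equivalent to $x\in\Ker\chi$, where $\chi$ is the $p$-adic cyclotomic character (and one uses $\chi(P_k)\subseteq 1+p\Z_p$, which holds because $\Q_p(\mu_p)/\Q_p$ is tame, whence $\Q_p(\mu_p)\subseteq k^{\mathrm{tame}}$ and $P_k$ fixes $\mu_p$). In particular, $\Tr_{k/\Q_p}(y_i)=0$ if and only if $x_i\in\Ker\chi$. Moreover, since $F\colon G_k\to k_+$ is a group homomorphism it kills commutators; applying $F$ to the relation of Theorem D, and using $F(\tau)=0$ --- because $\tau$ is tame, so $\rec_k^{-1}(\overline{\tau})$ is a prime-to-$p$ root of unity, which $\log_k$ annihilates --- together with $x_0'\in\overline{\langle\tau,x_0\rangle}$ (which forces $F(x_0')\in\Z_p\cdot F(x_0)$), one obtains $(1-tc)\,F(x_0)=p^s y_1$ for some $c\in\Z_p$. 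Since $y_1\neq 0$, this makes $F(x_0)$ and $y_1$ nonzero and $\Q_p$-proportional, so $x_0\in\Ker\chi$ if and only if $\Tr_{k/\Q_p}(y_1)=0$.

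It then remains to decide which of the topological normal generators $x_0,\dots,x_d$ of $P_k$ lie in $\Ker\chi$: concretely, the claim is that $\chi$ is trivial on $x_i$ for every $i\neq 2$ when $d$ is even, and trivial on $x_i$ for every $i\ge 2$ when $d$ is odd (with $x_0$ and $x_1$ linked by the preceding computation). This is the crux, and the only point where the explicit shape of the relation --- not merely its abelianization --- is used. I would establish it by entering the construction of the Jannsen--Wingberg presentation recorded in \cite{6}: the generators of $P_k$ come from a basis of $1+\mathfrak{m}_k$ adapted to the norm map $N_{k/\Q_p}$ and to the $p$-power roots of unity in $k$; the factor $(x_0')^t x_1^{p^s}$ records the datum of $\mu_{p^\infty}(k)$; and the surface-type word $[x_1,x_2][x_3,x_4]\cdots$ --- respectively $[x_1,x_1'][x_2,x_3]\cdots$ --- records the Demushkin-type cup-product pairing in the cohomology of $G_k$, so that the generator singled out by $\chi$ is exactly the symplectic partner of the ``cyclotomic'' generator, with the parity of the genus of the word determining its index. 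I expect transporting $\chi$ (equivalently $N_{k/\Q_p}$) through this explicit construction to be the main obstacle, and it is precisely here that the analogy with the action of mapping class groups on homology --- Dehn twists acting as transvections for the intersection form --- is to be used, both to put the generators into a normal form adapted to $\chi$ and to identify the distinguished index.
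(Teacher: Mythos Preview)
Your reduction to the condition $x_i\in\Ker\chi$ is correct and elegant, but the final step --- determining which $x_i$ lie in $\Ker\chi$ --- has a genuine gap. The relation in Theorem~D, once abelianized, imposes no constraint whatsoever on $\chi(x_i)$ for the indices $i$ appearing only inside commutators: in the even case, for instance, $\chi(x_3),\dots,\chi(x_d)$ are invisible to the abelianized relation, and your computation linking $F(x_0)$ to $y_1$ already exhausts what the relation yields at the abelian level. Proposing to ``enter the construction'' of the Jannsen--Wingberg presentation is not a proof; that construction is long, the generators it produces are highly non-canonical, and even if one succeeded in computing $\chi$ on one particular system of generators, the theorem is asserted for an \emph{arbitrary} fixed system satisfying the relations of Theorem~D --- and the very Dehn-twist automorphisms you mention show that many such systems exist. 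Your idea of using Dehn twists ``to put the generators into a normal form adapted to $\chi$'' cannot work as stated, because those automorphisms do not preserve $\chi$: indeed, the failure of $\Aut(G_k)$ to preserve $\chi$-related structure is the theme of the entire paper.

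The paper's argument is quite different, and the ingredient you are missing is Lemma~\ref{preservation Ker(Tr)}: the subspace $\Ker(\Tr_{k/\Q_p})\subset k_+$ is invariant under the action $\alpha\mapsto\alpha_+$ of \emph{every} automorphism of $G_k$, a fact coming from the mono-anabelian reconstruction of the trace/transfer map rather than from the presentation. Granting this, one writes down Dehn-twist automorphisms $\varphi_i,\varphi_i',\varphi_i''$ of $G_k$ explicitly from the presentation, computes that each $(\varphi)_+$ acts on $k_+$ as a transvection in the basis $y_1,\dots,y_d$, and then observes that for any $v\in\Ker(\Tr)$ the differences $(\varphi)_+(v)-v$ must again lie in $\Ker(\Tr)$. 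Iterating these constraints and using $\dim\Ker(\Tr)=d-1$ forces first $y_3,\dots,y_d$ (resp.\ $y_2,\dots,y_d$), and then the remaining vector via one further twist, into the kernel. So the Dehn twists are used not to compute $\chi$ on the generators, but to exploit an invariance of the kernel that holds uniformly over all choices of Jannsen--Wingberg generators --- which is also why the conclusion is insensitive to which system one fixed.
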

 We obtain a useful description of the sub-$\Q_{p}$-vector space $(\Q_{p})_{+} \subset k_{+}$ in terms of the basis $y_{1}, \ldots, y_{d}$ by applying Theorem E.
This description enables us to apply the techniques developed in \cite{11} to prove Theorem C, which is the first main result of the present paper.

Here, we note that the generators and relations of $G_{k}$ in Theorem D are non-canonical, and that the proof of Theorem D is purely group-theoretic. 
Moreover, it is nontrivial to relate these generators and relations of $G_{k}$ to the ring structure of $k$. 
From this perspective, it is of independent interest the issue of whether or not, under rather mild assumptions such as those in Theorem E, one can obtain a result that directly connects the generators and relations of $G_{k}$ in Theorem D with the ring structure of $k$.
\\\par
Next, let us discuss the second main theorem of the present paper.
First, we recall that one interesting problem  from the perspective of algorithmic anabelian geometry is the issue of whether or not the subgroup $\Aut(k) \subset \Out(G_{k})$ can be recovered functorially and group-theoretically from $G_{k}$.  
For example, as shown in \cite{2}, Theorem 6.12, under suitable assumptions on $k$, the set of $\Out(G_{k})$-conjugates of the subgroup $\Aut(k) \subset \Out(G_{k})$ can be recovered functorially and group-theoretically.

\begin{thma}[\cite{2}]
Suppose that $k$ is Galois-specifiable [cf.\ \cite{2}, Definition 6.1].  
Then the set of $\Out(G_{k})$-conjugates of the subgroup $\Aut(k) \subset \Out(G_{k})$ is equal to the set of strictly quasi-geometric subgroups of $\Out(G_{k})$ [cf.\ \cite{2}, Definition 6.5, (i), (ii)].
In particular, the set of $\Out(G_{k})$-conjugates of the subgroup $\Aut(k) \subset \Out(G_{k})$ can be recovered functorially and group-theoretically.
\end{thma}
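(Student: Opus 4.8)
The plan is to establish the two inclusions between the set $\mathcal{C}$ of $\Out(G_{k})$-conjugates of the field-theoretic subgroup $\Aut(k) \subset \Out(G_{k})$ and the set $\mathcal{Q}$ of strictly quasi-geometric subgroups of $\Out(G_{k})$, and then to deduce the final assertion formally. As a preliminary observation, for an automorphism $\alpha$ of the topological group $G_{k}$, transporting the ring structure of $\overline{k}$ along $\alpha$ produces an algebraically closed field equipped with a continuous $G_{k}$-action whose fixed field $k^{\alpha}$ is a finite extension of $\Q_{p}$ with $G_{k^{\alpha}}=G_{k}$ as topological groups, and under this identification $\alpha\, \Aut(k)\, \alpha^{-1}$ becomes the field-theoretic subgroup $\Aut(k^{\alpha}) \subset \Out(G_{k})$. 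Thus $\mathcal{C}$ is precisely the set of field-theoretic subgroups attached to finite extensions of $\Q_{p}$ whose absolute Galois group coincides with $G_{k}$.

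For the inclusion $\mathcal{C} \subseteq \mathcal{Q}$, I would first verify that $\Aut(k)$ itself satisfies the conditions of \cite{2}, Definition 6.5, (i), (ii). Each of these conditions is formulated in purely group-theoretic terms; to check them one applies the mono-anabelian reconstruction algorithm --- which recovers from $G_{k}$ the invariants $p$, $d=[k:\Q_{p}]$, the residue data, the cyclotome, and the higher ramification filtration in the upper numbering --- together with the various characterizations of $\Aut(k)$ provided by Theorem A (for instance, as the subgroup of $\Out(G_{k})$ consisting of the classes of automorphisms preserving Hodge--Tate-ness, equivalently the ramification filtration). Since strict quasi-geometricity refers only to the abstract group $\Out(G_{k})$, it is stable under conjugation by $\Out(G_{k})$, so every member of $\mathcal{C}$ lies in $\mathcal{Q}$.

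The hard part is the reverse inclusion $\mathcal{Q} \subseteq \mathcal{C}$. Given a strictly quasi-geometric subgroup $H \le \Out(G_{k})$, I would use the data built into \cite{2}, Definition 6.5, to lift $H$ to a suitable subgroup of $\Aut(G_{k})$ and run the reconstruction algorithm ``twisted by $H$'', so as to produce a finite extension $k_{H}/\Q_{p}$ together with an isomorphism $G_{k_{H}} \stackrel{\sim}{\to} G_{k}$ carrying $\Aut(k_{H})$ onto $H$; equivalently, $H$ is the field-theoretic subgroup attached to some $k_{H}$ with $G_{k_{H}} \cong G_{k}$. The remaining, and genuinely delicate, step is to upgrade this to the assertion that $H$ is $\Out(G_{k})$-conjugate to $\Aut(k)$. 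This is exactly the point at which the hypothesis that $k$ is Galois-specifiable (\cite{2}, Definition 6.1) is indispensable: as recalled in the Introduction, non-isomorphic mixed-characteristic local fields may share an absolute Galois group, so in the absence of such a hypothesis there is no reason for the various $\Aut(k_{H})$ to be conjugate. I expect that Galois-specifiability constrains the arithmetic of $k$ --- in effect, the Galois-closure data and the resulting rigidity of the $\Gal(\overline{\Q_{p}}/\Q_{p})$-picture --- strongly enough to force any such $k_{H}$ to produce a field-theoretic subgroup $\Out(G_{k})$-conjugate to $\Aut(k)$; proving this rigidity, presumably by combining the group-theoretic reconstruction of the ramification filtration with the constraints imposed by Galois-specifiability, is the main obstacle.

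Finally, the ``in particular'' follows formally. Since being a strictly quasi-geometric subgroup of $\Out(G_{k})$ is a group-theoretic condition, any isomorphism of profinite groups $G_{k} \stackrel{\sim}{\to} G_{k^{\prime}}$ induces an isomorphism $\Out(G_{k}) \stackrel{\sim}{\to} \Out(G_{k^{\prime}})$ carrying $\mathcal{Q}$ onto the corresponding set for $G_{k^{\prime}}$; by the equality $\mathcal{C}=\mathcal{Q}$ just established, it therefore carries the set of $\Out(G_{k})$-conjugates of $\Aut(k)$ onto that of $\Aut(k^{\prime})$, which is the claimed functorial and group-theoretic recovery.
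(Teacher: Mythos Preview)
The paper does not contain a proof of this statement: Theorem~F is stated in the Introduction as a result quoted from \cite{2}, Theorem 6.12, and no argument for it is given anywhere in the body of the paper. So there is no ``paper's own proof'' to compare your proposal against.

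As for the proposal itself, it is a reasonable outline of the expected shape of such an argument, but it is not a proof. You yourself flag the essential gap: the reverse inclusion $\mathcal{Q} \subseteq \mathcal{C}$ requires showing that any strictly quasi-geometric subgroup arises as a conjugate of $\Aut(k)$, and you write that ``proving this rigidity \ldots\ is the main obstacle'' without indicating how to overcome it. The forward inclusion $\mathcal{C} \subseteq \mathcal{Q}$ is also only asserted, not checked: you say ``I would first verify that $\Aut(k)$ itself satisfies the conditions of \cite{2}, Definition 6.5, (i), (ii)'' but do not carry this out, and whether Theorem~A alone suffices depends on exactly what those conditions are. If you want a genuine proof, you will need to consult \cite{2} directly for the definition of strict quasi-geometricity and for the argument of Theorem 6.12 there; the present paper only uses the result as background and does not reproduce the machinery needed to establish it.
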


It follows formally that if one can establish a functorial group-theoretic algorithm that recovers the subgroup $\Aut(k) \subset \Out(G_{k})$ from $G_{k}$, then it is necessary that the subgroup $\Aut(k) \subset \Out(G_{k})$ is a normal subgroup [cf.\ Remark \ref{summary of main result}].
However, the subgroup $\Aut(k) \subset \Out(G_{k})$ is not necessarily a normal subgroup in general as shown in \cite{2} and \cite{1}.
\begin{thma}[\cite{2}]
    Suppose that $p$ is an odd prime number, and that $k=\Q_{p}(\zeta_{p}, \sqrt[p]{p})$, where $\zeta_{p} \in \overline{k}$ is a primitive $p$-th root of unity, and $\sqrt[p]{p} \in \overline{k}$ is a $p$-th root of $p$. 
    Then the subgroup $\Aut(k) \subset \Out(G_{k})$ is not a normal subgroup.
\end{thma}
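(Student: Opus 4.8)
The plan is to invoke the characterization of the field-theoretic subgroup in Theorem~A --- concretely condition (3), compatibility with the upper-numbering ramification filtration $\{G_{k}^{u}\}_{u>0}$ --- and to exhibit an automorphism $\varphi$ of $G_{k}$ together with an element $a\in\Aut(k)$ for which $\varphi a\varphi^{-1}\notin\Aut(k)$; this already forces $\varphi\,\Aut(k)\,\varphi^{-1}\neq\Aut(k)$. The underlying picture is that conjugation by $\varphi$ carries $\{G_{k}^{u}\}_{u>0}$ --- whose stabilizer in $\Out(G_{k})$ is $\Aut(k)$ by Theorem~A --- to the (a priori different) filtration $\{\varphi(G_{k}^{u})\}_{u>0}$, whose stabilizer is $\varphi\,\Aut(k)\,\varphi^{-1}$; and $\varphi a\varphi^{-1}\notin\Aut(k)$ says precisely that $\Aut(k)$ fails to stabilize this transported filtration. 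So it is enough to produce such a pair $(\varphi,a)$.

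First I would pin down $\Aut(k)$. Since $k/\Q_{p}$ is Galois and every field automorphism of a $p$-adic local field is automatically continuous, $\Aut(k)=\Gal(k/\Q_{p})\cong(\Z/p\Z)\rtimes(\Z/p\Z)^{\times}=\Hol(\Z/p\Z)$, a non-abelian group of order $p(p-1)$ whose unique subgroup of order $p$ is $\Gal(k/\Q_{p}(\zeta_{p}))$. I would take $a$ to be an outer representative of a generator of this order-$p$ subgroup, namely the automorphism fixing $\zeta_{p}$ and sending $\sqrt[p]{p}\mapsto\zeta_{p}\sqrt[p]{p}$; as an element of $\Aut(G_{k})$ it is conjugation by a lift $\widetilde{a}\in\Gal(\overline{k}/\Q_{p}(\zeta_{p}))$ of $a$ (which normalizes $G_{k}$, since $G_{k}\triangleleft\Gal(\overline{k}/\Q_{p}(\zeta_{p}))$). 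The point to retain is that $a$ --- like every element of $\Aut(k)$ --- fixes $\Q_{p}$, hence preserves $\Z_{p}^{\times}\subset k^{\times}$ and $(\Q_{p})_{+}\subset k_{+}$, and (by Theorem~A) the whole ramification filtration.

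Next comes the construction of $\varphi$, which is the heart of the matter. Here I would work with the explicit presentation of $G_{k}$ in Theorem~D --- for $k=\Q_{p}(\zeta_{p},\sqrt[p]{p})$ one has $d=[k:\Q_{p}]=p(p-1)$, which is even, so the ``even'' relation of Theorem~D applies --- and build $\varphi$ as a ``twist'' of the generators $\sigma,\tau,x_{0},\dots,x_{d}$ that respects the defining relations, in the spirit of the Dehn twists appearing in the mapping-class-group dictionary of the present paper. The construction must be adapted to how one intends to detect that $\varphi a\varphi^{-1}$ escapes $\Aut(k)$: one may either (i) arrange $\varphi$ to act nontrivially on $G_{k}^{\ab}$ and then, via the map $F$ and the description of $(\Q_{p})_{+}\subset k_{+}$ in terms of the basis $y_{1},\dots,y_{d}$ afforded by Theorem~E, check that the automorphism of $\widehat{k^{\times}}\cong G_{k}^{\ab}$ induced by $\varphi a\varphi^{-1}$ fails to preserve $\Z_{p}^{\times}$; or (ii) use a twist supported on the wild inertia subgroup $P_{k}$ (topologically normally generated by $x_{0},\dots,x_{d}$) and detect the failure directly on the ramification filtration (condition (3) of Theorem~A). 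Either way, the hypothesis $k=\Q_{p}(\zeta_{p},\sqrt[p]{p})$ is what makes the computation feasible: because $\zeta_{p}\in k$ and $p\in(k^{\times})^{p}$, the Kummer-theoretic and cyclotomic data locating $p$, $\Z_{p}^{\times}$, and $\Q_{p}^{\times}$ inside $\widehat{k^{\times}}$ are explicit, and the ramification filtration of $k/\Q_{p}$ is small and explicitly computable. Having arranged $\varphi$ so that $\varphi a\varphi^{-1}$ fails to preserve $\Z_{p}^{\times}$ (equivalently, the ramification filtration), Theorem~A yields $\varphi a\varphi^{-1}\notin\Aut(k)$, hence the non-normality of $\Aut(k)$ in $\Out(G_{k})$.

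The main obstacle is precisely this middle step. The generators and relations of $G_{k}$ in Theorem~D are non-canonical and only loosely tied to the ring structure of $k$, so the substance of the proof is to verify that a suitably chosen group-theoretic twist $\varphi$ genuinely dislocates the field-theoretic subgroup --- i.e.\ that conjugating the concrete $a$ by $\varphi$ produces an automorphism whose action on $\widehat{k^{\times}}$ (equivalently, on $\{G_{k}^{u}\}_{u>0}$) provably cannot coincide with that of any $b\in\Aut(k)$. The features that make $\Q_{p}(\zeta_{p},\sqrt[p]{p})$ tractable are the combination of: (a) $\zeta_{p}\in k$ and $p\in(k^{\times})^{p}$, which render the relevant Kummer and cyclotomic modules fully explicit; (b) the small, explicitly computable ramification filtration of $k/\Q_{p}$; and (c) the non-abelian automorphism group $\Aut(k)\cong\Hol(\Z/p\Z)$, which supplies an $a$ whose $\varphi$-conjugate is readily detected as non-field-theoretic. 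The routine but lengthy verifications --- that $\varphi$ respects the relations of Theorem~D, and the local-class-field-theory computation of its effect on $G_{k}^{\ab}$ --- I would carry out separately. This non-normality is, of course, consistent with Theorem~F, which recovers group-theoretically only the $\Out(G_{k})$-conjugacy class of $\Aut(k)$, never the subgroup itself.
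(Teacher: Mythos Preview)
The statement (Theorem~G) is quoted from \cite{2}, and the paper does not give a fresh self-contained proof; it recalls in Remark~\ref{group-theoretic but not ring theoretic isom} the essence of the argument from \cite{2}, which is quite different from what you propose. That argument exploits the existence of a ``twin'' field $k_{1}=\Q_{p}(\zeta_{p},\sqrt[p]{p+1})$: by Jarden--Ritter one has $G_{k_{1}}\cong G_{k_{0}}$ as groups while $k_{1}\not\cong k_{0}$ as fields, and transporting $\Gal(k_{1}/\Q_{p}(\zeta_{p}))\subset\Out(G_{k_{1}})$ across any such isomorphism lands outside $N_{\Out(G_{k_{0}})}(\Aut(k_{0}))$ (this is \cite{2}, Corollary~8.6). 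No generators-and-relations computation is involved; the non-normality is read off from the failure of the absolute Galois group to determine the isomorphism class of the field.

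Your route --- build a Dehn-twist-style $\varphi$ from the Jannsen--Wingberg presentation and detect the failure on $k^{\times}$ or $k_{+}$ via the mono-anabelian reconstruction --- is not the \cite{2} approach but rather the method the present paper develops to prove its \emph{generalization} (Theorem~\ref{non-normality when d_{k} is even}); and indeed Remark~\ref{various non-normality results} states explicitly that this method is ``substantially different from the approach taken in \cite{2}''. So your plan is sound and, once completed, would give an alternative proof of Theorem~G as a special case of Theorem~\ref{non-normality when d_{k} is even}. The gap is that you defer precisely the part that carries all the weight: locating $(\Q_{p})_{+}$ inside $k_{+}$ relative to the $y_{i}$-basis so that a concrete twist (in the paper, $x_{2}\mapsto x_{2}x_{1}$ with the other generators fixed) visibly moves it. In the paper this is the content of Theorem~\ref{determination of Ker(Tr)} together with Lemma~\ref{O_{k^{(d=1)}}}, and it is not routine --- it is the main technical contribution of \S1. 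Your option (ii), working directly with the ramification filtration, is not pursued in the paper and would require separate input; the paper's actual mechanism is your option (i), via Theorem~\ref{Q^{×}-preserving} (normalizer elements are $\Q_{p}^{\times}$-preserving) rather than via Theorem~A directly.
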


\begin{thma}[\cite{1}]
    Suppose that $p$ is an odd prime number, and that $k/\Q_{p}$ is a finite abelian extension of even degree.
    Then the set of $\Out(G_{k})$-conjugates of the subgroup $\Aut(k) \subset \Out(G_{k})$ is infinite.
    In particular, the subgroup $\Aut(k) \subset \Out(G_{k})$ is not normal, and there exist infinitely many distinct [necessarily finite] subgroups of $\Out(G_{k})$ isomorphic to $\Aut(k)$.
\end{thma}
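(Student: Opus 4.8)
The plan is to show that the normalizer $N_{\Out(G_{k})}(\Aut(k))$ has infinite index in $\Out(G_{k})$. Since the set of $\Out(G_{k})$-conjugates of $\Aut(k)$ is in natural bijection with the coset space $\Out(G_{k})/N_{\Out(G_{k})}(\Aut(k))$, this gives the first assertion; and the ``in particular'' assertions then follow at once, as a normal subgroup is its own unique conjugate, while any two distinct conjugates are abstractly isomorphic to the finite group $\Aut(k)$.

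First I would reduce to the case $[k:\Q_{p}]=2$. Since $k/\Q_{p}$ is abelian of even degree, there is a quadratic intermediate field $k'$ of $k/\Q_{p}$ with $G_{k}\subset G_{k'}$ a characteristic subgroup [\cite{1}, Lemma~2.6, (i)]. Restriction along this inclusion induces a homomorphism $\Aut(G_{k'})\to\Aut(G_{k})$; given representatives of elements $\varphi'_{1},\varphi'_{2},\dots\in\Out(G_{k'})$ for which the subgroups $\varphi'_{n}\,\Aut(k')\,(\varphi'_{n})^{-1}$ are pairwise distinct, one restricts them to $G_{k}$ and, using the functoriality of local class field theory along $G_{k}\subset G_{k'}$ — under which the transfer $G_{k'}^{\ab}\to G_{k}^{\ab}$ corresponds to the natural inclusion $\widehat{(k')^{\times}}\hookrightarrow\widehat{k^{\times}}$, which is injective and carries $\widehat{\Q_{p}^{\times}}$ into $\widehat{\Q_{p}^{\times}}$ — checks that the resulting elements of $\Out(G_{k})$ conjugate $\Aut(k)$ to pairwise distinct subgroups. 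Thus the theorem for $k$ follows from the theorem for $k'$.

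So assume $[k:\Q_{p}]=2$. Via the local reciprocity isomorphism $G_{k}^{\ab}\stackrel{\sim}{\longrightarrow}\widehat{k^{\times}}$, the natural action of $\Out(G_{k})$ on $G_{k}^{\ab}$ transports to a functorial action of $\Out(G_{k})$ on $\widehat{k^{\times}}$, whose restriction to the field-theoretic subgroup $\Aut(k)=\Gal(k/\Q_{p})$ coincides — by functoriality of local class field theory — with the tautological Galois action, and is faithful there. Hence, if $\varphi\in\Out(G_{k})$ normalizes $\Aut(k)$, the induced automorphism $\varphi^{\times}$ of $\widehat{k^{\times}}$ commutes with the image of the nontrivial element of $\Gal(k/\Q_{p})$, and therefore fixes the subgroup of $\Gal(k/\Q_{p})$-invariants, which (as $p$ is odd) equals $\widehat{\Q_{p}^{\times}}\subset\widehat{k^{\times}}$. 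It therefore suffices to exhibit an element $\delta\in\Out(G_{k})$ of infinite order such that the subgroups $(\delta^{\times})^{n}(\widehat{\Q_{p}^{\times}})\subseteq\widehat{k^{\times}}$, $n\in\Z$, are pairwise distinct: for such $\delta$ no power $\delta^{n}$ with $n\neq 0$ lies in $N_{\Out(G_{k})}(\Aut(k))$, and in fact the cosets $\delta^{n}\,N_{\Out(G_{k})}(\Aut(k))$ are pairwise distinct, so there are infinitely many conjugates.

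To build $\delta$ I would use a presentation of $G_{k}$ as in Theorem~D (so $d=2$), the homomorphism $F\colon G_{k}\to k_{+}$, and the basis $y_{1},y_{2}$ of $k_{+}$. The element $\delta$ should be a ``Dehn twist'': an automorphism of $G_{k}$ obtained by modifying the generators $\sigma,\tau,x_{0},x_{1},x_{2}$ compatibly with the single defining relation of Theorem~D for even $d$ — for instance by twisting one of the $x_{i}$ by a suitable power of an element of the relevant closed subgroup, such as $\overline{\langle\tau,x_{0}\rangle}$ or $\overline{\langle\sigma,\tau,x_{1}\rangle}$ — together with a verification, by exhibiting an explicit inverse, that the assignment is indeed an automorphism, and, by inspecting the induced automorphism of $G_{k}^{\ab}$, that it is non-inner of infinite order. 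One then computes $\delta^{\times}$ via $F$ and local class field theory and finds that the twisting parameter is additive in $n$, so that the subgroups $(\delta^{\times})^{n}(\widehat{\Q_{p}^{\times}})$ are genuinely pairwise distinct; the input that makes this visible is the position of the line $(\Q_{p})_{+}\subset k_{+}$ relative to $y_{1},y_{2}$, which Theorem~E supplies: for $d=2$ it gives $\Ker(\Tr_{k/\Q_{p}})=\langle y_{1}\rangle_{\Q_{p}}$, and since $\Tr_{k/\Q_{p}}$ does not vanish on $(\Q_{p})_{+}$ this determines $(\Q_{p})_{+}$ — equivalently $\Q_{p}^{\times}\subset k^{\times}$ — to the precision needed. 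I expect the main obstacle to be exactly this last step: constructing the Dehn twist $\delta$ so as to respect the non-canonical relation of Theorem~D, and — the more delicate point — translating its effect on $G_{k}$ into its effect on the arithmetically meaningful subgroup $\Q_{p}^{\times}\subset k^{\times}$, i.e.\ bridging the group-theoretic presentation of Theorem~D and the ring structure of $k$; this bridging is precisely what Theorem~E and the accompanying analysis of the $\Aut(G_{k})$-action on $k_{+}$ are designed to provide. A secondary, more routine obstacle is the bookkeeping with the functorialities needed for the reduction to the quadratic case.
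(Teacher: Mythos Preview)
Your strategy is the original one of Hoshi--Nishio \cite{1}, which the paper sketches in its Introduction: reduce to a quadratic intermediate field $k'$ via the characteristic inclusion $G_k\subset G_{k'}$, then in the quadratic case use that $\Aut(k')\cong\Z/2\Z$ forces normalizer $=$ centralizer, so every element of $N_{\Out(G_{k'})}(\Aut(k'))$ acts on $\widehat{(k')^\times}$ preserving the Galois-invariants $\widehat{\Q_p^\times}$, and finally exhibit a Dehn twist whose powers move $\widehat{\Q_p^\times}$. The paper itself proves the stronger Theorem~\ref{non-normality when d_{k} is even} (absolutely Galois, even degree) \emph{without} reducing to a quadratic subfield. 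Its normalizer criterion is established in full generality (Theorem~\ref{Q^{×}-preserving}) via slimness: Proposition~\ref{ab. gal normalizer} identifies $N_{\Out(G_k)}(\Aut(k))$ with $\Aut_{G_k}(G_{\Q_p})/\Inn(G_k)$, so every element of the normalizer extends to an automorphism of $G_{\Q_p}$, which by the $d=1$ reconstruction (Lemma~\ref{d(G)=1 type}) acts trivially on $\Q_p^\times$. Your ``normalizer $=$ centralizer'' shortcut is clean but confined to the abelian situation; the slimness argument is precisely what enables the passage from Theorem~H to Theorem~I.

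One concrete correction on the Dehn twist you leave open: it is not built from the auxiliary subgroups $\overline{\langle\tau,x_0\rangle}$ or $\overline{\langle\sigma,\tau,x_1\rangle}$ appearing in the relation, but is simply
\[
\varphi(\sigma)=\sigma,\quad \varphi(\tau)=\tau,\quad \varphi(x_0)=x_0,\quad \varphi(x_1)=x_1,\quad \varphi(x_2)=x_2x_1
\]
(Lemma~\ref{O_{k^{(d=1)}}}); the defining relation is preserved because $[x_1,x_2x_1]=[x_1,x_2]$. Combined with Theorem~\ref{determination of Ker(Tr)}, which gives $\Ker(\Tr_{k/\Q_p})=\langle y_1,y_3,\dots,y_{d_k}\rangle_{\Q_p}$, the line $(\Q_p)_+\subset k_+$ is spanned by a vector of the form $\alpha_1y_1+y_2+\alpha_3y_3+\cdots+\alpha_{d_k}y_{d_k}$, on which $(\varphi^n)_+$ acts by $\alpha_1\mapsto\alpha_1+n$; hence $(\varphi^n)_+$ moves $(\Q_p)_+$ for every $n\neq0$, giving infinitely many conjugates directly in $\Out(G_k)$ with no reduction to $d_k=2$.
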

Theorem G and Theorem H were proved by different techniques.
In the present paper, we prove the non-normality of $\Aut(k) \subset \Out(G_{k})$ in a more general form that includes both Theorem G and Theorem H.
The following result is the second main theorem of the present paper.

\begin{thma}
     Suppose that $p$ is an odd prime number, and that $k/\Q_p$ is a finite Galois extension of degree greater than one.
     Then the set of $\Out(G_{k})$-conjugates of the subgroup $\Aut(k) \subset \Out(G_{k})$ is infinite.
    In particular, the subgroup $\Aut(k) \subset \Out(G_{k})$ is not normal, and there exist infinitely many distinct [necessarily finite] subgroups of $\Out(G_{k})$ isomorphic to $\Aut(k)$.
\end{thma}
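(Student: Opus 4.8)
Since $k/\Q_{p}$ is Galois, the subgroup $\Aut(k)=\Gal(k/\Q_{p})\subset\Out(G_{k})$ is finite, so the set of its $\Out(G_{k})$-conjugates is in natural bijection with $\Out(G_{k})/N$, where $N\stackrel{\mathrm{def}}{=}N_{\Out(G_{k})}(\Aut(k))$. Hence it suffices to produce elements $\varphi_{1},\varphi_{2},\ldots\in\Out(G_{k})$ lying in pairwise distinct cosets of $N$, i.e.\ with $\varphi_{m}^{-1}\varphi_{n}\notin N$ for $m\neq n$. The plan is to take for the $\varphi_{n}$ the classes of suitable ``Dehn twists'' on the presentation of $G_{k}$ supplied by Theorem D, and to separate their cosets of $N$ via the mono-anabelian reconstruction of $k_{+}$.

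The first step is the following rigidity observation. The mono-anabelian reconstruction of $k_{+}$ from $G_{k}$ is functorial and trivial on inner automorphisms [$k_{+}$ being the $G_{k}$-invariant part of the reconstructed $G_{k}$-module $(\widehat{\overline{k}})_{+}$], and the induced automorphisms of $k_{+}$ are $\Q_{p}$-linear [a continuous group automorphism of $k_{+}$ is $\Z_{p}$-linear, hence $\Q_{p}$-linear]. This yields a homomorphism $\Out(G_{k})\to\GL(k_{+})$, $\psi\mapsto\psi_{+}$, which carries $\Aut(k)$ onto the image of the tautological action of $\Gal(k/\Q_{p})$ on $k_{+}$. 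If $\psi\in N$, then $\psi_{+}$ normalizes that image, hence preserves its space of invariants, which is $(\Q_{p})_{+}=\langle 1\rangle_{\Q_{p}}\subset k_{+}$ [as $k^{\Gal(k/\Q_{p})}=\Q_{p}$]. Therefore it is enough to exhibit automorphisms $\delta_{n}$ of $G_{k}$ ($n\in\Z$) whose images $(\delta_{n})_{+}\in\GL(k_{+})$ send $(\Q_{p})_{+}$ to pairwise distinct lines: for then $\delta_{m}^{-1}\delta_{n}\in N$ would force $(\delta_{n})_{+}((\Q_{p})_{+})=(\delta_{m})_{+}((\Q_{p})_{+})$.

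The twists are read off from Theorem D. Fix a commutator $[x_{a},x_{b}]$ occurring in the relation of Theorem D(4) such that $x_{b}$ occurs nowhere else in that relation [so necessarily $b\geq 2$; in particular $x_{b}$ is unrelated to $x_{1}^{p^{s}}$, $x_{0}'$ and $x_{1}'$], and define $\delta_{n}$ on the generators by $x_{b}\mapsto x_{b}x_{a}^{n}$ and the identity on every other generator. The identity $[x_{a},x_{b}x_{a}^{n}]=[x_{a},x_{b}]$ shows that $\delta_{n}$ respects the relation of Theorem D(4); the relation of Theorem D(3) and the topological normal generation of $P_{k}$ are untouched; and $\delta_{-n}$ is an inverse, so $\delta_{n}\in\Aut(G_{k})$. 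As $x_{a},x_{b}$ lie in the wild inertia $P_{k}$, which is characteristic in $G_{k}$, the images in $G_{k}^{\ab}\cong\widehat{k^{\times}}$ of $x_{a}$, $x_{b}$ and $\delta_{n}(x_{b})$ all land in the pro-$p$ part of $\rec(\mathcal{O}_{k}^{\times})$, where the reconstruction of $k_{+}$ agrees with $\log_{k}\circ\rec^{-1}$ — with no choice of uniformizer — so that $(\delta_{n})_{+}(y_{b})=y_{b}+ny_{a}$ and $(\delta_{n})_{+}(y_{i})=y_{i}$ for $i\neq b$. Writing $1=\sum_{i=1}^{d}c_{i}y_{i}$ in the basis $y_{1},\ldots,y_{d}$ of $k_{+}$, we get $(\delta_{n})_{+}(1)=1+nc_{b}\,y_{a}$, which spans pairwise distinct lines as $n$ ranges over $\Z$ provided $c_{b}\neq 0$ and $y_{a}\notin\langle 1\rangle_{\Q_{p}}$. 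When $d$ is even, choose $(a,b)=(1,2)$: by Theorem E, $\Ker(\Tr_{k/\Q_{p}})=\langle y_{1},y_{3},y_{4},\ldots,y_{d}\rangle_{\Q_{p}}$, whereas $\Tr_{k/\Q_{p}}(1)=d\neq 0$, so $c_{2}\neq 0$ and $y_{1}\in\Ker(\Tr_{k/\Q_{p}})$, hence $y_{1}\notin\langle 1\rangle_{\Q_{p}}$. Thus the classes $[\delta_{n}]$ give infinitely many distinct $\Out(G_{k})$-conjugates of $\Aut(k)$, and, being conjugate to the finite group $\Aut(k)$, they are in particular infinitely many distinct finite subgroups of $\Out(G_{k})$ isomorphic to $\Aut(k)$.

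For odd $d$ the same scheme works with $[x_{a},x_{b}]$ chosen among $[x_{2},x_{3}],[x_{4},x_{5}],\ldots,[x_{d-1},x_{d}]$ — by Theorem E, $\Ker(\Tr_{k/\Q_{p}})=\langle y_{2},\ldots,y_{d}\rangle_{\Q_{p}}$, so each of these $y_{a}$ avoids $\langle 1\rangle_{\Q_{p}}$ — as soon as some $c_{i}$ with $i\geq 2$ is non-zero, equivalently as soon as $1\notin\langle y_{1}\rangle_{\Q_{p}}$. The main obstacle, and the one place where the odd case is genuinely more delicate than the even one, is to rule out the degenerate possibility $\langle 1\rangle_{\Q_{p}}=\langle y_{1}\rangle_{\Q_{p}}$; note that abelianizing the relation of Theorem D(4) for odd $d$ already shows $F(x_{0})\in\langle y_{1}\rangle_{\Q_{p}}$, so one must pin down precisely how the distinguished wild generator $x_{1}$ of Theorem D sits inside $G_{k}^{\ab}\cong\widehat{k^{\times}}$ relative to $\rec(\Q_{p}^{\times})$. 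I expect this to be resolved either by a closer analysis of the construction underlying Theorem D, or — in the degenerate case — by an auxiliary twist supported on the handle $[x_{1},x_{1}']$ together with a compensating modification of $\sigma$ and $\tau$ preserving the relation of Theorem D(3).
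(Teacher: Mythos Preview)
Your treatment of the even-degree case is correct and essentially reproduces the paper's argument (Theorem~\ref{non-normality when d_{k} is even} via Lemma~\ref{O_{k^{(d=1)}}} and Theorem~\ref{determination of Ker(Tr)}). Your rigidity step --- observing directly that $\psi\in N_{\Out(G_{k})}(\Aut(k))$ forces $\psi_{+}$ to normalize the image of $\Aut(k)$ in $\GL(k_{+})$ and hence to preserve its invariant line $(\Q_{p})_{+}$ --- is in fact a pleasant shortcut compared to the paper's route through Proposition~\ref{ab. gal normalizer} and Theorem~\ref{Q^{×}-preserving}.

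The odd-degree case, however, is \emph{not} proved in your proposal. You correctly isolate the obstruction: all of the available Dehn twists on the commutators $[x_{2},x_{3}],\ldots,[x_{d-1},x_{d}]$ fix $y_{1}$, so if it happens that $\langle 1\rangle_{\Q_{p}}=\langle y_{1}\rangle_{\Q_{p}}$ then none of them moves $(\Q_{p})_{+}$, and your argument collapses. Your closing paragraph only \emph{expects} this to be resolvable, either by a finer analysis of the Jannsen--Wingberg presentation or by an auxiliary twist on the handle $[x_{1},x_{1}']$. But $x_{1}'$ is not a free generator --- it lies in $\overline{\langle\sigma,\tau,x_{1}\rangle}$ --- so there is no obvious twist supported there, and the paper itself records (Remark~\ref{odd case}) that it is \emph{unknown} whether any $\alpha\in\Aut(G_{k})$ satisfies $\alpha_{+}(y_{1})\neq y_{1}$ when $d_{k}$ is odd. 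So the gap you identify is real and, as far as the paper is concerned, unresolved by elementary twist arguments.

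The paper handles the odd case by an entirely different mechanism (Theorem~\ref{non-normality when d_{k} is odd}). Rather than exhibiting explicit elements outside $N$, it compares $p_{k}$-adic Lie group dimensions inside $\GL_{d_{k}}(\Q_{p_{k}})$. The image of $\Out(G_{k})$ under $\Phi\colon\Out(G_{k})\to\GL_{d_{k}}(\Q_{p_{k}})$ contains $\mathrm{Sp}_{2g}(\Z_{p_{k}})$ (this is where the mapping class group enters, via Theorem~\ref{The Symplectic Representation}), a closed subgroup of dimension $2g^{2}+g$; on the other hand $\Phi(Z_{\Out(G_{k})}(\Aut(k)))$ sits inside $Z_{\GL_{d_{k}}(\Q_{p_{k}})}(\Aut(k))\cong(\Q_{p_{k}}[\Aut(k)])^{\times}$, of dimension at most $d_{k}=2g+1$. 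For $g\geq 2$ the inequality $2g^{2}+g>2g+1$ forces $[\Out(G_{k}):N]=\infty$; the residual case $g=1$ (i.e.\ $d_{k}=3$) is dispatched by noting that $\mathrm{Sp}_{2}(\Z_{p_{k}})$ is not virtually abelian. This argument is non-constructive --- it does not produce explicit coset representatives as your even-degree argument does --- but it sidesteps precisely the degeneracy you could not rule out.
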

We apply essentially different arguments to prove Theorem I depending on whether $[k \colon \Q_{p}]$ is even or odd.
Suppose that $[k \colon \Q_{p}]$ is even.
Let $\varphi$ be an automorphism of $G_{k}$ as in Theorem C.
We write $\varphi_{+}$ for the automorphisms, which is induced from $\varphi$ by the mono-anabelian reconstruction algorithm.
Suppose that $k/\Q_{p}$ is a finite Galois extension.
It follows from the discussion in \S1 that the automorphism $\varphi_{+}$ does not preserve the $\Q_{p}$-subspace $(\Q_{p})_{+} \subset k_{+}$ of $k_{+}$.
We prove Theorem I in the case where $k/\Q_{p}$ is a finite Galois extension of even degree by combining this result with the argument of \cite{1}.

On the other hand, when $k/\Q_{p}$ is of odd degree, since it is not clear to the author whether or not there exists an element of $\Out(G_{k})$ that does not preserve the $\Q_{p}$-subspace $(\Q_{p})_{+} \subset k_{+}$, the same technique cannot be applied.
A key observation in the proof of Theorem I in the case where $k/\Q_{p}$ is a finite Galois extension of odd degree greater than one is obtained by focusing on the analogy between $\Out(G_{k})$ and the mapping class group of a surface, as developed in \cite{14}, \cite{20}, and related works.

In the present paper, we prove Theorem I by comparing, from the perspective of the natural $p$-adic Lie group structures, the image of the ``subgroup of $\Out(G_{k})$ corresponding to the mapping class group'' [cf.\ Remark \ref{mapping class group meaning}] by the group homomorphism $\Out(G_{k}) \to \Aut_{\Q_{p}}(k_{+})$ induced by the mono-anabelian reconstruction algorithm of \cite{3}, Proposition 3.11, $\rm(i\hspace{-.08em}v\hspace{-.06em})$,
with the image of $Z_{\Out(G_{k})}(\Aut(k))$ by the same homomorphism.

Here, we note that the case of even degree can also be proved by the same method as in the odd degree case. 
However, as discussed above, we instead apply another method, which has the additional advantage of providing explicit elements in $\Out(G_{k}) \setminus N_{\Out(G_{k})}(\Aut(k))$ [cf. Remark \ref{odd-case}].
\\\par
Although \cite{14} and \cite{20} studied certain analogies between the outer automorphism groups of absolute Galois groups of mixed-characteristic local fields and the mapping class groups of topological surfaces, there have been very few results that apply this analogy to objects reflecting the ring structure of $k$. 
In the present paper, some results from the theory of mapping class groups are applied to prove the non-normality of field-theoretic subgroups and to establish nontrivial phenomena concerning $p$-adic representations. 
To the best of the author's knowledge, this is the first work that applies results from the theory of mapping class groups to the anabelian geometry of mixed-characteristic local fields in a way that goes beyond a mere analogy.

\setcounter{section}{-1}

\section{Notational conventions}

\noindent{\bf Monoids}. In the present paper, every ``monoid'' is assumed to be commutative. Let $M$ be a monoid.
[The monoid operation of $M$ will be written multiplicatively]. We shall write $M^{\pf}$ for the \textit{perfection} of $M$ [i.e., the monoid obtained by
forming the inductive limit of the inductive system of monoids
\begin{align*}
    \cdots \to M \to M \to \cdots 
\end{align*}
given by assigning to each positive integer $n$ a copy of $M$, which we write for $I_{n}$, and to each two positive integers $n$, $m$ such that $n$ divides $m$ the homomorphism $I_{n}=M \to M=I_{m}$ given by multiplication by $m/n$].
\vspace{0.3cm}

\noindent {\bf Groups}.
If $G$ is a group, and $H \subset G$ is a subgroup of $G$, then we shall write $N_{G}(H) \subset G$ for the \textit{normalizer} of $H$ in $G$.
Moreover, we shall write $Z_{G}(H)$ for the \textit{centralizer} of $H$ in $G$.
\vspace{0.3cm}

\noindent {\bf Topological groups}.
If $G$ is a topological group, then we shall write $G^{\ab}$  for the \textit{abelianization} of $G$ [i.e., the quotient of $G$ by the closure of the commutator subgroup of $G$].
\vspace{0.3cm}

\noindent {\bf Rings}. In the present paper, every ``ring'' is assumed to be unital, associative, and commutative. If $R$ is a ring, then we shall write $R_{+}$ for the underlying additive group of $R$ and $R^{\times} \subset R$  for the multiplicative group of units of $R$.
\vspace{0.3cm}

\noindent {\bf Mixed-characteristic local fields}. We shall refer to a field isomorphic to a finite extension of $\Q_{p}$, for some prime number $p$, as an \textit{MLF}. Here, ``MLF'' is to be understood as an abbreviation for ``mixed-characteristic local field''. 
Let $k$ be an MLF and $\overline{k}$ an algebraic closure of $k$.
Then we shall write
\begin{itemize}
    \item $\mathcal{O}_{k}$ for the ring of integers of $k$,
    \item  $\mathfrak{m}_{k} \subset \mathcal{O}_{k}$ for the maximal ideal of $\mathcal{O}_{k}$,
    \item $\underline{k} \stackrel{\mathrm{def}}{=}\mathcal{O}_{k}/\mathfrak{m}_{k}$ for the residue field of $\mathcal{O}_{k}$,
    \item $k^{(d=1)} \subset k$ for the [uniquely determined] minimal MLF contained in $k$,
     \item $p_{k}$ for the residue characteristic of $k$,
     \item $d_{k}\stackrel{\mathrm{def}}{=}[k \colon k^{(d=1)}]$ for the degree of the finite extension $k/k^{(d=1)}$,
     \item $f_{k}\stackrel{\mathrm{def}}{=}[\underline{k} \colon \underline{k}^{(d=1)}]$ for the degree of the finite extension $\underline{k}/\underline{k}^{(d=1)}$ [where we write $\underline{k}^{(d=1)}$ for the residue field of the ring of integers of the MLF $k^{(d=1)}$],
    \item $G_{k} \stackrel{\mathrm{def}}{=} \Gal(\overline{k}/k)$ for the absolute Galois group of $k$ determined by the algebraic closure $\overline{k}$,
    \item $G_{k^{(d=1)}}\stackrel{\mathrm{def}}{=}\Gal(\overline{k}/k^{(d=1)})$ for the absolute Galois group of $k^{(d=1)}$ determined by the algebraic closure $\overline{k}$,
    \item  $I_{k} \subset G_{k}$ for the inertia subgroup of $G_{k}$,
    \item $P_{k} \subset I_{k}$ for the wild inertia subgroup of $G_{k}$, 
    \item $\widehat{k^{\times}}$ for the profinite completion of the multiplicative group $k^{\times}$ of $k$,
    \item  $\rec_{k} \colon \widehat{k^{\times}} \stackrel{\sim}{\longrightarrow} G_{k}^{\ab}$ for the isomorphism induced by the reciprocity homomorphism $k^{\times} \hookrightarrow G_{k}^{\ab}$ in local class field theory,
    \item  $\log_{k} \colon \mathcal{O}_{k}^{\times} \to k_{+}$ for the $p_{k}$-adic logarithm, and
    \item $\mathcal{I}_{k}\stackrel{\mathrm{def}}{=}(2p_{k})^{-1}\log_{k}(\mathcal{O}_{k}^{\times}) \subset k_{+}$ for the log-shell of $k$.
\end{itemize}

\noindent {\bf Groups of MLF-type}.
We shall refer to a group isomorphic to the absolute Galois group of an MLF as a \textit{group of MLF-type}.
Let $G$ be a group of MLF-type.
We shall say that a subgroup of $G$ is \textit{open} if it is of finite index.
It follows from \cite{9}, Theorem 0.1, and an elementary argument from general topology that the open subgroups of $G$ determine the structure of a profinite group of $G$, and that this topology of $G$ is a uniquely determined topology of $G$ that gives rise to a structure of profinite group on $G$.
In the remainder of the present paper, we equip $G$ with this profinite group structure.
In particular, an arbitrary isomorphism between groups of MLF-type as abstract groups is necessarily an isomorphism of profinite groups.

Moreover, let us recall [cf.\ \cite{3}, Definition 3.5, Proposition 3.6, Definition 3.10, Proposition 3.11] that there exist functorial group-theoretic algorithms for constructing, from a group of MLF-type $G$,
\begin{itemize}
    \item a prime number $p(G)$,
    \item positive integers $d(G)$, $f(G)$,
    \item subgroups $P(G) \subset I(G) \subset G$ of $G$,
    \item subgroups $\mathcal{O}^{\times}(G) \subset k^{\times}(G) \subset G^{\ab}$, where we write $\rec_{G}$ for the final inclusion $k^{\times}(G) \subset G^{\ab}$, and
    \item a $\Q_{p(G)}$-vector space $k_{+}(G)$
\end{itemize}
 which ``correspond'' to
\begin{itemize}
    \item the prime number $p_{k}$,
    \item the positive integers $d_{k}$, $f_{k}$,
    \item the subgroups $P_{k} \subset I_{k} \subset G_{k}$ of $G_{k}$,
    \item the subgroups $\mathcal{O}_{k}^{\times} \subset k^{\times} \stackrel{\rec_{k}}{\hookrightarrow} G_{k}^{\ab}$, and
    \item the $\Q_{p_{k}}$-vector space $k_{+}$,
\end{itemize}
respectively.

Moreover,  it follows from \cite{3}, Proposition 3.11, $\rm(\hspace{.18em}i\hspace{.18em})$, $\rm(i\hspace{-.08em}v\hspace{-.06em})$; \cite{1}, Lemma 1.2, that we have natural homomorphisms
\begin{align*}
    \Aut(G) \to \Aut(k^{\times}(G)),\ \ \ \Aut(G) \to \Aut_{\Q_{p(G)}}(k_{+}(G)).
\end{align*}
 Since the automorphism of $G^{\ab}$ induced by an inner automorphism of $G$ is trivial, it follows from the constructions of $k^{\times}(G)$ and $k_{+}(G)$ that the automorphisms of $k^{\times}(G)$ and $k_{+}(G)$ induced by an inner automorphism of $G$ are trivial.
  Thus, the above two homomorphisms determine group homomorphisms
  \begin{align*}
      \Out(G) \to \Aut(k^{\times}(G)),\ \ \ \Out(G) \to \Aut_{\Q_{p(G)}}(k_{+}(G)).
  \end{align*}

Let $\alpha$ be an element of $\Out(G)$. 
We write $\alpha^{\times}$ [respectively, $\alpha_{+}$] for the image of $\alpha$ by the homomorphism $\Out(G) \to \Aut(k^{\times}(G))$ [respectively, $\Out(G) \to \Aut_{\Q_{p(G)}}(k_{+}(G))$].
In the present paper, we call $\alpha^{\times}$ and $\alpha_{+}$ \textit{the automorphisms induced from $\alpha$ by the mono-anabelian reconstruction algorithms}.

Let $k$ be an MLF and $\alpha$ an element of $\Out(G_{k})$.
By abuse of notation, we shall denote by $\alpha_{+} \colon k_{+} \stackrel{\sim}{\longrightarrow} k_{+}$, $\alpha^{\times} \colon k^{\times} \stackrel{\sim}{\longrightarrow} k^{\times}$ the respective images of $\alpha_{+} \colon k_{+}(G_{k}) \stackrel{\sim}{\longrightarrow} k_{+}(G_{k})$, $\alpha^{\times} \colon k^{\times}(G_{k}) \stackrel{\sim}{\longrightarrow} k^{\times}(G_{k})$ by the isomorphisms $\Aut(k_{+}(G_{k})) \stackrel{\sim}{\longrightarrow} \Aut(k_{+})$, $\Aut(k^{\times}(G_{k})) \stackrel{\sim}{\longrightarrow} \Aut(k^{\times})$ induced by the natural isomorphisms $k_{+} \stackrel{\sim}{\longrightarrow} k_{+}(G_{k})$, $k^{\times} \stackrel{\sim}{\longrightarrow} k^{\times}(G_{k})$ [cf.\ \cite{3}, Proposition 3.11, $\rm(\hspace{.18em}i\hspace{.18em})$, $\rm(i\hspace{-.08em}v\hspace{-.06em})$].

\newpage

\section{A group-theoretic description of the kernels of absolute trace maps and the application to the study of Aut-intrinsic Hodge-Tate representations}
In the present \S1, let $k$ be an MLF, $\overline{k}$ an algebraic closure of $k$, and $G$ a group of MLF-type.
In the present \S1, we show that the kernel of the trace map for the extension $k/k^{(d=1)}$ can be described ``group-theoretically''.
 Moreover, by means of this group-theoretic description of the kernel of the trace map for $k/k^{(d=1)}$, together with classical techniques from $p$-adic representation theory, we show that if $p_{k}$ is odd, $d_{k}$ is even, and $k$ is an absolutely Galois MLF, i.e., if $k/k^{(d=1)}$ is a Galois extension, then there exist a $\Q_{p_{k}}$-vector space $V$ of dimension $d_{k}$, a continuous representation $\rho \colon G_{k}\to \Aut_{\Q_{p_{k}}}(V)$ that is irreducible, abelian, and crystalline [hence also Hodge-Tate], and a group automorphism $\varphi \colon G_{k} \stackrel{\sim}{\longrightarrow} G_{k}$ such that
     $\rho \circ \varphi$ is not Hodge-Tate.
This result generalizes the main theorem of \cite{11}.

\begin{thm}[Jannsen-Wingberg]\label{generator and relation of group of MLF-type} 
   Suppose that $p(G)$ is an odd prime number.
   Then there exist $\sigma$, $\tau$, $x_{0}, \ldots, x_{d(G)}\in G$, positive integers $s$, $t$, an element $x_{0}^{\prime} \in \overline{\langle\tau, x_{0} \rangle}$; and an element $x_{1}^{\prime} \in \overline{\langle \sigma,\tau,x_{1}\rangle}$, where ``$\overline{\langle S \rangle}$'' denotes the closed subgroup of $G$ topologically generated by $S$, such that the following conditions hold:
    \begin{enumerate}
     \item [(1)] $G$ is presented as the profinite group topologically generated by $\sigma$, $\tau$, $x_{0}, \ldots, x_{d(G)}\in G$ and subject to the relations described in the conditions (3) and (4) below.
  \item [(2)] The closed normal subgroup $P(G)$ of $G$ is topologically normally generated by $x_{0},\ldots,x_{d(G)}$.
  \item [(3)] The elements $\sigma$, $\tau$ satisfy the relation $\sigma \tau \sigma^{-1}=\tau^{p(G)^{f(G)}}$.
  \item [(4)] In addition, the generators satisfy one further relation: 
   \begin{enumerate}
    \item for even $d(G)$,
    \begin{align*}
        \sigma x_{0} \sigma^{-1}=(x_{0}^{\prime})^{t} x_{1}^{p(G)^s}[x_{1},x_{2}][x_{3},x_{4}]\cdots[x_{d(G)-1},x_{d(G)}];
    \end{align*}
    \item for odd $d(G)$, 
    \begin{align*}
         \sigma x_{0} \sigma^{-1}=(x_{0}^{\prime})^{t} x_{1}^{p(G)^s}[x_{1},x_{1}^{\prime}][x_{2},x_{3}]\cdots[x_{d(G)-1},x_{d(G)}].
    \end{align*}
  \end{enumerate}
 
\end{enumerate}
    
\end{thm}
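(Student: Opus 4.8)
Fix an MLF $k$ with $G_{k}\cong G$ and an algebraic closure $\overline{k}$, so that $P(G)$ is identified with the wild inertia subgroup $P_{k}\subset G_{k}$, $d(G)$ with $[k:\Q_{p(G)}]$, and $f(G)$ with the residue degree. The plan is to analyze $G$ through the extension
\[ 1 \longrightarrow P(G) \longrightarrow G \longrightarrow G^{\mathrm{tame}} \longrightarrow 1, \]
where $G^{\mathrm{tame}}:=G/P(G)$ is the maximal tamely ramified quotient, feeding in two classical facts — one about the quotient, one about the kernel. On the one hand, $G^{\mathrm{tame}}$ is topologically generated by a Frobenius lift $\sigma$ and a generator $\tau$ of tame inertia subject to the single relation $\sigma\tau\sigma^{-1}=\tau^{p(G)^{f(G)}}$; this produces the generators $\sigma,\tau$ and relation (3), and exhibits $G^{\mathrm{tame}}$ as a metabelian profinite group with $\mathrm{cd}_{p(G)}(G^{\mathrm{tame}})\le 1$, its $p(G)$-Sylow subgroups being procyclic. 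On the other hand, by a classical theorem of Iwasawa the kernel $P(G)=\Gal(\overline{k}/k^{\mathrm{tr}})$, where $k^{\mathrm{tr}}$ denotes the maximal tamely ramified extension of $k$, is a free pro-$p(G)$ group of countably infinite rank; hence $\mathrm{cd}_{p(G)}(G)\le 2$, and the presentation problem reduces to describing the conjugation action of $G^{\mathrm{tame}}$ on $P(G)$ together with a normal generating set. The content of the theorem is that, despite the infinite rank of $P(G)$, a \emph{single} further relation suffices; establishing this, and determining its precise shape, is where the real work lies.

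First I would fix the normal generators and account for the counts. Running the Hochschild--Serre spectral sequence for the displayed extension — using $H^{i}(P(G),\F_{p(G)})=0$ for $i\ge 2$ (freeness) and $\mathrm{cd}_{p(G)}(G^{\mathrm{tame}})\le 1$ — yields $H^{1}(G,\F_{p(G)})\cong\F_{p(G)}\oplus H^{1}(P(G),\F_{p(G)})^{G^{\mathrm{tame}}}$ and $H^{2}(G,\F_{p(G)})\cong H^{1}(G^{\mathrm{tame}},H^{1}(P(G),\F_{p(G)}))$. Comparing with the values supplied by local class field theory and Tate local duality, namely $\dim H^{1}(G,\F_{p(G)})=d(G)+1+\varepsilon$ and $\dim H^{2}(G,\F_{p(G)})=\varepsilon$, where $\varepsilon\in\{0,1\}$ records whether a primitive $p(G)$-th root of unity lies in $k$, one reads off that $P(G)$ is topologically normally generated in $G$ by $d(G)+1$ elements $x_{0},\dots,x_{d(G)}$ — this is (2) — and that exactly one relation beyond (3) is to be expected. (When $\varepsilon=0$ this is not the minimal presentation, but it keeps the statement uniform.)

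The heart of the argument — and the step I expect to be the main obstacle — is to pin down the precise word $w$ with $\sigma x_{0}\sigma^{-1}=w$ and to prove that (3) together with this relation is a \emph{complete} presentation. For completeness I would form the abstract profinite group $\overline{G}$ topologically generated by $\sigma,\tau,x_{0},\dots,x_{d(G)}$ subject to (3) and $\sigma x_{0}\sigma^{-1}=w$; since these relations hold in $G$, there is a continuous surjection $\overline{G}\twoheadrightarrow G$, and one shows it is an isomorphism by checking that it induces an isomorphism on $H^{1}(-,\F_{p(G)})$ and an injection on $H^{2}(-,\F_{p(G)})$ and then invoking the standard rigidity argument for profinite groups with $\mathrm{cd}_{p(G)}\le 2$ (which amounts to showing that its kernel has no nontrivial pro-$p(G)$ quotient). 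The shape of $w$ itself — a factor $x_{1}^{p(G)^{s}}$ followed by a product of commutators $[x_{1},x_{2}][x_{3},x_{4}]\cdots$ (and $[x_{1},x_{1}']\,[x_{2},x_{3}]\cdots$ when $d(G)$ is odd) — is the Demushkin--Serre--Labute normal form for the single defining relator of a pro-$p$ Poincar\'e-duality group of dimension two; it is most transparent when $\varepsilon=1$, where the maximal pro-$p(G)$ quotient of $G$ is literally a Demushkin group, and in general it is forced by the symplectic structure that Tate local duality — alternating precisely because $p(G)$ is odd — imposes on the degree-one cup product, exactly the mechanism by which the surface relation $\prod_{i}[a_{i},b_{i}]=1$ encodes the intersection form on $H_{1}$. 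The correction factors $(x_{0}')^{t}$ and $x_{1}^{p(G)^{s}}$, and the extra $x_{1}'$ in the odd-degree case, absorb the non-free part of $\widehat{k^{\times}}$ — the $p(G)$-power roots of unity in $k$ and the procyclic ``$\sigma$-direction'' — and determining the exponents $s,t$ and verifying that nothing further is hidden is the delicate bookkeeping that constitutes the bulk of the Jannsen--Wingberg argument. (The hypothesis that $p(G)$ is odd is essential here: for $p=2$ the duality pairing need not be alternating, and that case requires separate treatment.)
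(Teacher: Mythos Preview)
The paper does not actually prove this theorem: its entire proof is a citation to \cite{6}, Theorem 7.5.14 (Neukirch--Schmidt--Wingberg), together with \cite{3}, Proposition 3.6. Your sketch, by contrast, is a genuine outline of the Jannsen--Wingberg argument itself --- the tame/wild d\'evissage, Iwasawa's theorem on the free pro-$p$ structure of $P(G)$, the Hochschild--Serre/Tate-duality generator-and-relator count, and the Demushkin normal form for the single extra relator --- and as such it is correct and considerably more informative than what the paper supplies, though (as you yourself acknowledge) the delicate bookkeeping that pins down $s$, $t$, $x_{0}'$, $x_{1}'$ is only gestured at rather than carried out.

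The one point that the paper's proof handles and that you leave implicit is the reason the second citation, \cite{3}, Proposition 3.6, is needed. The theorem is stated for an \emph{abstract} group $G$ of MLF-type, with $p(G)$, $d(G)$, $f(G)$, $P(G)$ defined via the mono-anabelian reconstruction algorithm, whereas the result in \cite{6} is proved for a concrete $G_{k}$ in terms of the field-theoretic invariants $p_{k}$, $d_{k}$, $f_{k}$, $P_{k}$. Your opening move ``Fix an MLF $k$ with $G_{k}\cong G$'' is enough to produce generators satisfying the relations, but one must still check that the reconstructed invariants match the field-theoretic ones under any such isomorphism --- i.e., that $p(G)=p_{k}$, $d(G)=d_{k}$, $f(G)=f_{k}$, and that $P(G)$ is carried to $P_{k}$. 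This is exactly the content of \cite{3}, Proposition 3.6; without it, the exponents $p(G)^{f(G)}$ and $p(G)^{s}$ appearing in (3) and (4), as well as the number $d(G)+1$ of generators in (2), would not be well-defined intrinsically.
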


\begin{proof}
    This assertion follows from \cite{6}, Theorem 7.5.14, together with \cite{3}, Proposition 3.6.
\end{proof}

In the remainder of the present paper, we apply the notational conventions introduced in the statement of Theorem \ref{generator and relation of group of MLF-type} in each of the situations in which $p(G)$ is assumed to be odd.
Moreover, for each $i=1,2,\ldots,d(G)$, write $y_{i} \in k_{+}(G)$ for the image of $x_{i}$ in $k_{+}(G)$ by the composite $P(G) \hookrightarrow I(G) \to \mathcal{O}^{\times}(G) \to k_{+}(G)$ [cf.\ conditions (1), (2) of Theorem \ref{generator and relation of group of MLF-type}; \cite{3}, Definition 3.10, $\rm(\hspace{.18em}i\hspace{.18em})$, $\rm(\hspace{.08em}ii\hspace{.08em})$, $\rm(\hspace{.06em}v\hspace{.06em})$].

We recall that the topological group $k_{+}(G)$ has a natural structure of $\Q_{p(G)}$-vector space of dimension $d(G)$ [cf.\ \cite{1}, Lemma 1.2].
Moreover, one verifies easily that the isomorphism of topological groups $k_{+}(G_{k}) \stackrel{\sim}{\longrightarrow} k_{+}$ of \ \cite{3}, Proposition 3.11, $\rm(i\hspace{-.08em}v\hspace{-.06em})$ is also an isomorphism of $\Q_{p_{k}}$-vector spaces [here, we have $p_{k} = p(G_{k})$ --- cf.\ \cite{3}, Proposition 3.6].
When $d(G)>1$ and $p(G)$ is odd, the $d(G)$ elements $y_{1},\ldots,y_{d(G)}$ defined in the preceding paragraph form a basis of the $\Q_{p(G)}$-vector space $k_{+}(G)$ [cf.\ \cite{1}, Lemma 1.3].

\begin{lem}\label{preservation Ker(Tr)}
    Let $\alpha$ be an element of $\Out(G_k)$, and let $\alpha_{+}$ be the automorphism of the $\Q_{p_k}$-vector space $k_{+}$ induced from $\alpha$ by the mono-anabelian reconstruction algorithm.  
Then we have $\alpha_{+}(\Ker(\Tr_{k/k^{(d=1)}})) = \Ker(\Tr_{k/k^{(d=1)}})$.
\end{lem}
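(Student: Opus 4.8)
The plan is to exhibit $\Ker(\Tr_{k/k^{(d=1)}})\subset k_+$ as a subspace that can be reconstructed functorially and group-theoretically from $G_k$, and then to conclude by the functoriality built into the mono-anabelian reconstruction: since $\alpha_+$ is, by definition, the automorphism of $k_+\ (\cong k_+(G_k))$ induced from $\alpha$ by these algorithms, it will preserve any subspace that is itself produced by a functorial group-theoretic algorithm. The crucial point --- and the reason the statement holds even though the \emph{line} $(k^{(d=1)})_+=\Q_{p_k}\cdot 1\subset k_+$ is in general not preserved by $\Out(G_k)$ --- is that the trace \emph{functional}, and hence its kernel, can be recovered up to a nonzero scalar from the $p_k$-adic cyclotomic character of $G_k$, which manifestly is group-theoretic; the trace ``sees'' the minimal subfield $k^{(d=1)}$ only through $\chi_{\mathrm{cyc}}$, not through an actual field embedding $k^{(d=1)}\hookrightarrow k$.

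Concretely, I would start from the identity
\[
\Tr_{k/k^{(d=1)}}\circ\log_k=\log_{k^{(d=1)}}\circ N_{k/k^{(d=1)}}
\]
on $\mathcal{O}_k^\times$ --- which is just ``$\log$ of a product is a sum of $\log$'s'', applied to $N_{k/k^{(d=1)}}=\prod_\iota\iota$ and $\Tr_{k/k^{(d=1)}}=\sum_\iota\iota$ over the embeddings $\iota$ of $k$ over $k^{(d=1)}$ --- together with the standard compatibility of the local reciprocity map with norms, which gives
\[
\chi_{\mathrm{cyc}}\circ\rec_k\big|_{\mathcal{O}_k^\times}=\big(N_{k/k^{(d=1)}}\big)^{\pm 1}
\]
(using that $\chi_{\mathrm{cyc}}\circ\rec_{k^{(d=1)}}$ restricts on $\mathcal{O}_{k^{(d=1)}}^\times$ to the identity or its inverse). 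Since $\log_k$ induces an isomorphism of $\Q_{p_k}$-vector spaces $\mathcal{O}_k^\times\otimes_{\Z_{p_k}}\Q_{p_k}\xrightarrow{\ \sim\ }k_+$, the two identities together show that $\Ker(\Tr_{k/k^{(d=1)}})\subset k_+$ is precisely the image under $\log_k$ of the $\Q_{p_k}$-span of the kernel of the homomorphism $\log_{p_k}\circ\chi_{\mathrm{cyc}}\circ\rec_k\colon\mathcal{O}_k^\times\to\Q_{p_k}$.

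I would then note that everything on the right-hand side is produced by the functorial group-theoretic algorithms attached to a group of MLF-type $G$: the prime $p(G)$, the chain $\mathcal{O}^\times(G)\subset k^\times(G)\subset G^{\ab}$ with the inclusion $\rec_G$, the reconstructed logarithm map $\mathcal{O}^\times(G)\to k_+(G)$, the $\Q_{p(G)}$-vector space $k_+(G)$, and the cyclotomic character $\chi_{\mathrm{cyc}}\colon G\to\Z_{p(G)}^\times$ [cf.\ \cite{3}]. Moreover, for any $\widetilde{\alpha}\in\Aut(G)$ the reconstructed ``cyclotome'' is a free rank-one $\widehat{\Z}$-module, so $\widetilde{\alpha}$ acts on it through a scalar in $\widehat{\Z}^\times$, which commutes with every $\chi_{\mathrm{cyc}}(g)$; hence $\chi_{\mathrm{cyc}}\circ\widetilde{\alpha}=\chi_{\mathrm{cyc}}$ (and in any event a $\widehat{\Z}^\times$-power of $\chi_{\mathrm{cyc}}$ has the same kernel after composition with $\log_{p(G)}$). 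Thus the hyperplane of $k_+(G)$ cut out by $\log_{p(G)}\circ\chi_{\mathrm{cyc}}\circ\rec_G$ is stable under $\Aut(G)$, hence under $\Out(G)$; transporting along the natural isomorphism $k_+\xrightarrow{\ \sim\ }k_+(G_k)$ and using the description of $\Ker(\Tr_{k/k^{(d=1)}})$ just obtained yields $\alpha_+(\Ker(\Tr_{k/k^{(d=1)}}))=\Ker(\Tr_{k/k^{(d=1)}})$.

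I do not expect a serious conceptual obstacle; the work is in (i) verifying the two identities with the correct normalizations and (ii) checking that the remaining normalization ambiguities --- the sign in $(N_{k/k^{(d=1)}})^{\pm1}$, the choice of generator of the cyclotome, the normalization of $\rec_k$ --- do not affect the subspace, which they do not, since each modifies the functional $\log_{p_k}\circ\chi_{\mathrm{cyc}}\circ\rec_k$ only by multiplication by a unit of $\Z_{p_k}$ and therefore leaves its kernel unchanged. The one substantive input is the $\Aut(G)$-equivariance of the reconstruction of the cyclotomic character (``cyclotomic rigidity''), which is exactly what makes the minimal subfield $k^{(d=1)}$ visible enough to pin down $\Ker(\Tr_{k/k^{(d=1)}})$; this is already available in \cite{3}.
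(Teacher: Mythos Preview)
Your argument is correct and is essentially the content of the cited reference: \cite{1}, Lemma~2.3,~(ii), proves exactly that $\Ker(\Tr_{k/k^{(d=1)}})$ is $\Out(G_k)$-stable by identifying it, via $\log_k$ and the norm compatibility of local class field theory, with the kernel of the homomorphism $\mathcal{O}_k^\times\otimes\Q_{p_k}\to\Q_{p_k}$ determined by the $p_k$-adic cyclotomic character, and then invoking cyclotomic rigidity. So your route coincides with the one the paper defers to; the only comment is that your justification of $\chi_{\mathrm{cyc}}\circ\widetilde{\alpha}=\chi_{\mathrm{cyc}}$ could be phrased more directly by noting that the $p(G)$-part of $I(G)/P(G)$ furnishes a functorial cyclotome, which makes the equivariance immediate.
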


\begin{proof}
    This assertion follows immediately from \cite{1}, Lemma 2.3, $\rm(\hspace{.08em}ii\hspace{.08em})$.
\end{proof}

With the above preparations in place, we can now prove the following assertion, which is the most technically substantial result of the present paper:
\begin{thm}\label{determination of Ker(Tr)}
    Suppose that $p_k$ is odd and that $d_k \geq 2$.
    Then the following equality of sub-$\Q_{p_{k}}$-vector spaces of $k_{+}$ holds:
    \begin{align*}
    \Ker(\Tr_{k/k^{(d=1)}}) = \left\{
\begin{aligned}
&\langle y_{1},y_{3},\ldots,y_{d_{k}} \rangle_{\Q_{p_{k}}}\ (\text{if $d_{k}$ is even})\\
&\langle y_{2},y_{3},\ldots,y_{d_{k}} \rangle_{\Q_{p_{k}}}\  (\text{if $d_{k}$ is odd}),
\end{aligned}
\right.\ 
\end{align*}
 where ``$\langle S \rangle_{\Q_{p_{k}}}$'' denotes the sub-$\Q_{p_{k}}$-vector space of $k_{+}$ generated by ``$S$''.
Here, by abuse of notation, for each integer $i$ satisfying $1 \leq i \leq d_{k}$, we write $y_{i} \in k_{+}$ for the image of $y_{i} \in k_{+}(G_{k})$ by the isomorphism $k_{+}(G_{k}) \stackrel{\sim}{\longrightarrow} k_{+}$ of \cite{3}, Proposition 3.11, $\rm(i\hspace{-.08em}v\hspace{-.06em})$.
\end{thm}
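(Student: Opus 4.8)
The plan is to deduce the asserted equality from a containment of $\Q_{p_k}$-subspaces of equal dimension, and to obtain that containment by exhibiting enough ``Dehn twists'' in $\Out(G_k)$ --- each one modifying a single Jannsen--Wingberg generator by another --- and playing them off against the $\Out(G_k)$-invariance of $\Ker(\Tr_{k/k^{(d=1)}})$ supplied by Lemma \ref{preservation Ker(Tr)}. First I would note that $\Tr_{k/k^{(d=1)}} \colon k_{+} \to (k^{(d=1)})_{+}$ is surjective, since $k/k^{(d=1)}$ is separable, so that $\Ker(\Tr_{k/k^{(d=1)}})$ is a hyperplane in $k_{+}$, of $\Q_{p_k}$-dimension $d_k - 1$. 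As $y_1, \dots, y_{d_k}$ is a basis of $k_{+}$, the right-hand side of the asserted equality is spanned by $d_k - 1$ of these basis vectors, so it too has dimension $d_k - 1$; hence it suffices to prove that $\Tr_{k/k^{(d=1)}}(y_i) = 0$ for every index $i$ occurring on the right-hand side, i.e. for every $i \neq 2$ if $d_k$ is even and for every $i \neq 1$ if $d_k$ is odd.

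The driving mechanism is the following elementary remark. If $\alpha \in \Aut(G_k)$ induces an automorphism $\alpha_{+}$ of $k_{+}$ with $\alpha_{+}(y_a) = y_a + y_b$ for some distinct $a, b \in \{1, \dots, d_k\}$ and $\alpha_{+}(y_i) = y_i$ for every other $i \in \{1, \dots, d_k\}$, then $\Tr_{k/k^{(d=1)}}(y_b) = 0$. Indeed, by Lemma \ref{preservation Ker(Tr)} the invertible $\Q_{p_k}$-linear map $\alpha_{+}$ preserves the hyperplane $\Ker(\Tr_{k/k^{(d=1)}})$, so the linear functionals $\Tr_{k/k^{(d=1)}} \circ \alpha_{+}$ and $\Tr_{k/k^{(d=1)}}$ have the same kernel, whence $\Tr_{k/k^{(d=1)}} \circ \alpha_{+} = \lambda \cdot \Tr_{k/k^{(d=1)}}$ for some $\lambda \in \Q_{p_k}^{\times}$; evaluating at $y_i$ with $i \neq a$ gives $(\lambda - 1)\Tr_{k/k^{(d=1)}}(y_i) = 0$, and evaluating at $y_a$ gives $(\lambda - 1)\Tr_{k/k^{(d=1)}}(y_a) = \Tr_{k/k^{(d=1)}}(y_b)$. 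If $\lambda = 1$, the second identity yields $\Tr_{k/k^{(d=1)}}(y_b) = 0$; if $\lambda \neq 1$, the first identities give $\Tr_{k/k^{(d=1)}}(y_i) = 0$ for all $i \neq a$, in particular for $i = b$.

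It remains to construct such automorphisms from the presentation of Theorem \ref{generator and relation of group of MLF-type}. For each commutator $[x_i, x_j]$ with $i, j \in \{2, \dots, d_k\}$ occurring in relation (4), I would consider the two assignments on the generating set that fix every generator except one and send, respectively, $x_j \mapsto x_j x_i$ and $x_i \mapsto x_i x_j$; and, when $d_k$ is even, for the remaining commutator $[x_1, x_2]$, the single assignment $x_2 \mapsto x_2 x_1$ fixing all other generators. Each such assignment preserves relation (3) trivially, and preserves relation (4) as well: the modified generator occurs in relation (4) only inside the commutator of its own pair, where $[x_i, x_j x_i] = [x_i, x_j]$ and $[x_i x_j, x_j] = [x_i, x_j]$; the power term $x_1^{p_k^{s}}$ is unaffected because $x_1$ is never itself modified, only ever used as a modifier (in the even case); and $x_0' \in \overline{\langle \tau, x_0 \rangle}$, $x_1' \in \overline{\langle \sigma, \tau, x_1 \rangle}$ are fixed, since the topological generators $\sigma, \tau, x_0, x_1$ of these closed subgroups all lie among the fixed generators. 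Hence each assignment extends to a continuous endomorphism of $G_k$, and inverting the modifier furnishes a two-sided inverse, so each assignment is an automorphism. By functoriality of the mono-anabelian reconstruction algorithm --- applied to the functorial homomorphism $P(G_k) \hookrightarrow I(G_k) \to \mathcal{O}^{\times}(G_k) \to k_{+}(G_k)$, whose value at $x_i$ is $y_i$ --- the induced automorphism $\alpha_{+}$ of $k_{+}$ sends $y_a \mapsto y_a + y_b$ (where $x_a$ is the modified generator and $x_b$ the modifier) and fixes every other $y_i$. Feeding each of these automorphisms into the remark above yields $\Tr_{k/k^{(d=1)}}(y_b) = 0$ for every index $b$ that ever occurs as a modifier; reading off relation (4) in each parity, the set of modifier indices is precisely $\{1, 3, 4, \dots, d_k\}$ when $d_k$ is even and $\{2, 3, \dots, d_k\}$ when $d_k$ is odd, which together with the dimension count of the first paragraph proves the theorem.

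The step I expect to be most delicate is the verification that these twists are genuinely automorphisms of $G_k$: this requires checking, pair by pair, that modifying a generator disturbs no other occurrence of it in relation (4) --- in particular that in the even case $x_1$, which also sits in the term $x_1^{p_k^{s}}$, is never modified but only used as a modifier --- and that the auxiliary elements $x_0'$, $x_1'$ are genuinely left fixed. The attendant combinatorial point, that the resulting modifier indices exhaust $\{1, \dots, d_k\} \setminus \{2\}$ (resp. $\{1, \dots, d_k\} \setminus \{1\}$), likewise hinges on the parity-dependent shape of relation (4), and is exactly what singles out the index $2$ for even $d_k$ and the index $1$ for odd $d_k$.
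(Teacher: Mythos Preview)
Your proof is correct and, in fact, cleaner than the paper's own argument. Both proofs rest on the same ingredients --- the $\Out(G_k)$-invariance of $\Ker(\Tr_{k/k^{(d=1)}})$ (Lemma \ref{preservation Ker(Tr)}) and Dehn-twist automorphisms built from the Jannsen--Wingberg presentation --- but they deploy them differently. The paper fixes a nonzero element $v \in \Ker(\Tr_{k/k^{(d=1)}})$, expands it in the basis $y_1,\dots,y_{d_k}$, and then runs a case-by-case induction (its Claim (a)--(d)) to force each $y_i$ into the kernel; to propagate information from one commutator pair $[a_i,b_i]$ to the adjacent one it needs a third family of twists $\varphi_i''$ that intertwine two neighbouring handles, and it also treats $d_k=2$ separately. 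Your proportionality observation --- that $\Tr \circ \alpha_+ = \lambda \cdot \Tr$ whenever $\alpha_+$ preserves the hyperplane $\Ker(\Tr)$ --- short-circuits all of this: it extracts $\Tr(y_b)=0$ directly from a single intra-pair twist $y_a \mapsto y_a + y_b$, so the linking twists $\varphi_i''$ are never needed, no induction is required, and the case $d_k=2$ falls out with no special handling. What the paper's approach buys in exchange is explicitness about the twists $\varphi_i''$, which it later reuses (via Remark \ref{mapping class group meaning} and the symplectic representation) in the proof of Theorem \ref{non-normality when d_{k} is odd}; your streamlined argument for Theorem \ref{determination of Ker(Tr)} simply does not require them.
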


\begin{proof}
    Since $k/k^{(d=1)}$ is a finite separable extension, the map $\Tr_{k/k^{(d=1)}} \colon k_{+} \to k^{(d=1)}_{+}$ is a surjective $\Q_{p_{k}}$-linear map. Consequently, we have the following equality:
    \begin{align*}
        \dim\Ker(\Tr_{k/k^{(d=1)}}) = d_{k} - 1.
    \end{align*}
    
    First, we prove Theorem \ref{determination of Ker(Tr)} in the case where $d_{k} = 2$.
    In this case, we have $\dim \Ker(\Tr_{k/k^{(d=1)}})=1$.
    Fix a $\Q_{p_{k}}$-basis $\alpha_1y_1+\alpha_2y_2$ of $\Ker(\Tr_{k/k^{(d=1)}})$, where $\alpha_{1}, \alpha_{2} \in \Q_{p_{k}}$.
    If $\alpha_2 = 0$, then since $\alpha_1 y_1 \neq 0$, it follows that $\alpha_1 \neq 0$.
    Thus, we conclude that $y_1 \in \Ker(\Tr_{k/k^{(d=1)}})$.
    Next, suppose that $\alpha_{2} \neq 0$.
    Observe that it follows from Theorem \ref{generator and relation of group of MLF-type} that there exists an element $\varphi \in \Aut(G_k)$ satisfying the following equalities:
    \begin{align*}
        \varphi(\sigma)=\sigma,\ \varphi(\tau)=\tau,\ \varphi(x_{0})=x_{0},\ \varphi(x_{1})=x_{1},\ \varphi(x_{2})=x_{2}x_{1}.
    \end{align*}
    It follows from the construction of the natural homomorphism $\Out(G_{k}) \to \Aut_{\Q_{p_{k}}}(k_{+})$ [cf.\ \S 0, \textrm{Groups of MLF-type}] that we obtain $\varphi_{+}(\alpha_{1}y_{1} + \alpha_{2}y_{2}) = (\alpha_{1} + \alpha_{2})y_{1} + \alpha_{2}y_{2}$.
    Moreover, it follows from Lemma \ref{preservation Ker(Tr)} that $\varphi_{+}(\alpha_{1}y_{1} + \alpha_{2}y_{2}) \in \Ker(\Tr_{k/k^{(d=1)}})$.
    Thus, we obtain that
    \begin{align*}
        \alpha_{2}y_{1}=\varphi_{+}(\alpha_{1}y_{1} + \alpha_{2}y_{2})-(\alpha_{1}y_{1} + \alpha_{2}y_{2}) \in \Ker(\Tr_{k/k^{(d=1)}}).
    \end{align*}
    Since $\alpha_{2} \neq 0$, we obtain $y_{1} \in \Ker(\Tr_{k/k^{(d=1)}})$.
    Thus, Theorem \ref{determination of Ker(Tr)} in the case where $d_k = 2$ follows.
    
    Next, we prove Theorem \ref{determination of Ker(Tr)} in the case where $d_{k} \geq 3$.
    Suppose that $d_{k} \geq 3$.
    We define the integer $g$ to be 
   \begin{align*}
   g \stackrel{\mathrm{def}}{=} \left\{
\begin{aligned}
&\frac{d_{k}-2}{2}\ (\text{if $d_{k}$ is even})\\
&\frac{d_{k}-1}{2}\  (\text{if $d_{k}$ is odd}).
\end{aligned}
\right.\ 
\end{align*}
For each integer $i$ satisfying $1 \leq i \leq g$, 
we put
\begin{align*}
    a_{i} \stackrel{\mathrm{def}}{=} \left\{
\begin{aligned}
&x_{2i+1}\ (\text{if $d_{k}$ is even})\\
&x_{2i}\  (\text{if $d_{k}$ is odd}),
\end{aligned}
\right.\ 
b_{i} \stackrel{\mathrm{def}}{=} \left\{
\begin{aligned}
&x_{2i+2}\ (\text{if $d_{k}$ is even})\\
&x_{2i+1}\  (\text{if $d_{k}$ is odd}).
\end{aligned}
\right.\ 
\end{align*}

Thus, it follows from Theorem \ref{generator and relation of group of MLF-type} that, for each integer $i$ satisfying $1 \leq i \leq g$, there exist automorphisms $\varphi_{i}$, $\varphi^{\prime}_{i}$ of $G_k$ satisfying the following equalities:
\begin{align*}
    &\varphi_{i}(\sigma)=\sigma,\ \varphi_{i}(\tau)=\tau,\ \varphi_{i}(a_{j})=a_{j},\ \varphi_{i}(b_{j^{\prime}})=b_{j^{\prime}},\ \varphi_{i}(b_{i}) = b_{i}a_{i},\\
    &\varphi^{\prime}_{i}(\sigma)=\sigma,\ \varphi^{\prime}_{i}(\tau)=\tau,\ \varphi^{\prime}_{i}(a_{j^{\prime}})=a_{j^{\prime}},\ \varphi^{\prime}_{i}(b_{j})=b_{j},\ \varphi^{\prime}_{i}(a_{i}) = a_{i}b_{i}^{-1},\\
    &\varphi_{i}(x_{j})=x_{j},\ \varphi^{\prime}_{i}(x_{j})=x_{j},\ (j=0,1,2\ \text{if $d_{k}$ is even})\\
    &\varphi_{i}(x_{j})=x_{j},\ \varphi^{\prime}_{i}(x_{j})=x_{j},\ (j=0,1\ \text{if $d_{k}$ is odd})
\end{align*}
where $j$ ranges over the integers satisfying $1 \leq j \leq g$, and 
$j^{\prime}$ ranges over the integers satisfying $1 \leq j^{\prime} \leq g$ and $j^{\prime} \neq i$.  
Moreover, when $d_{k} \geq 5$, for each integer $i$ satisfying $1 \leq i\leq g - 1$, there exists an automorphism $\varphi_{i}^{\prime\prime}$ of $G_k$ satisfying the following equalities:
\begin{align*}
   &\varphi_{i}^{\prime\prime}(\sigma)=\sigma,\ \varphi_{i}^{\prime\prime}(\tau)=\tau,\ \varphi_{i}^{\prime\prime}(a_{j})=a_{j},\ \varphi_{i}^{\prime\prime}(b_{j^{\prime}})=b_{j^{\prime}},\ \varphi_{i}^{\prime\prime}(a_{i})=a_{i}b_{i}^{-1}a_{i+1}b_{i+1}a_{i+1}^{-1},\\
   &\varphi_{i}^{\prime\prime}(b_{i})=a_{i+1}b_{i+1}^{-1}a_{i+1}^{-1}b_{i}a_{i+1}b_{i+1}a_{i+1}^{-1},\ \varphi_{i}^{\prime\prime}(a_{i+1})=a_{i+1}b_{i+1}^{-1}a_{i+1}^{-1}b_{i}a_{i+1},\\
   &\varphi^{\prime\prime}_{i}(x_{j})=x_{j},\ (j=0,1,2\ \text{if $d_{k}$ is even})\\
    &\varphi^{\prime\prime}_{i}(x_{j})=x_{j},\ (j=0,1\ \text{if $d_{k}$ is odd})
\end{align*}
where $j$ ranges over the integers satisfying $1 \leq j \leq g$ and $j \neq i,\ i+1$, and 
$j^{\prime}$ ranges over the integers satisfying $1 \leq j^{\prime} \leq g$ and $j^{\prime} \neq i$.
[See Remark \ref{mapping class group meaning} for the motivation of the definitions of $\varphi_{i}$, $\varphi^{\prime}_{i}$, $\varphi^{\prime\prime}_{i}$.]

Let $v \neq 0$ be an element of $\Ker(\Tr_{k/k^{(d=1)}})$.
It follows from \cite{1}, Lemma 1.3, that $v$ can be uniquely expressed as a $\Q_{p_{k}}$-linear combination of $y_1, \ldots, y_{d_k}$.
\begin{enumerate}
    \item [(1)] The case where $d_{k}$ is even.
\end{enumerate}
When $v$ is expressed as a $\Q_{p_{k}}$-linear combination of $y_1, \ldots, y_{d_k}$, we shall write $c_i$ [respectively, $e_{i}$] for the coefficient of $y_{2i+1}$ [respectively $y_{2i+2}$].
We first show that $y_3, \ldots, y_{d_k} \in \Ker(\Tr_{k/k^{(d=1)}})$.  
To this end, we prove the following claim.
\begin{claim*}
        \begin{enumerate}
       The following assertions hold:
    \item[$(a)$] If $e_i \neq 0$, then $y_{2i+1},\ y_{2i+2} \in \Ker(\Tr_{k/k^{(d=1)}})$.
    
    \item[$(b)$] If $c_i \neq 0$, then $y_{2i+1},\ y_{2i+2} \in \Ker(\Tr_{k/k^{(d=1)}})$.
    
    \item[$(c)$] If $c_i \neq 0$, then $y_{2i+3} \in \Ker(\Tr_{k/k^{(d=1)}})$.
    
    \item[$(d)$] If $c_{i+1} \neq 0$, then $y_{2i+1} \in \Ker(\Tr_{k/k^{(d=1)}})$.
\end{enumerate}
\end{claim*}
We first prove assertion (a).
Suppose that $e_{i} \neq 0$ holds.
It follows from the definition of the map $\Out(G_{k}) \to \Aut(k_{+})$ and Lemma \ref{preservation Ker(Tr)} that the following equality holds:
\begin{align*}
    e_{i}y_{2i+1}=(\varphi_{i})_{+}(v)-v \in \Ker(\Tr_{k/k^{(d=1)}}).
\end{align*}
Since $e_{i} \neq 0$, it follows that $y_{2i+1} \in \Ker(\Tr_{k/k^{(d=1)}})$.
Therefore, it follows from Lemma \ref{preservation Ker(Tr)} that we obtain that
\begin{align*}
    -y_{2i+2}=(\varphi^{\prime}_{i})_{+}(y_{2i+1})-y_{2i+1} \in \Ker(\Tr_{k/k^{(d=1)}}).
\end{align*}
This completes the proof of assertion (a).

Next, we prove assertion (b).
Suppose that $c_{i} \neq 0$.
It follows from the definition of the map $\Out(G_{k}) \to \Aut(k_{+})$ and Lemma \ref{preservation Ker(Tr)} that the following equality holds:
\begin{align*}
    -c_{i}y_{2i+2}=(\varphi^{\prime}_{i})_{+}(v)-v \in \Ker(\Tr_{k/k^{(d=1)}}).
\end{align*}
Since $c_{i} \neq 0$, it follows that $y_{2i+2} \in \Ker(\Tr_{k/k^{(d=1)}})$ holds.
Therefore, it follows from Lemma \ref{preservation Ker(Tr)} that we obtain that
\begin{align*}
    y_{2i+1}=(\varphi_{i})_{+}(y_{2i+2})-y_{2i+2} \in \Ker(\Tr_{k/k^{(d=1)}}).
\end{align*}
This completes the proof of assertion (b).

Next, we prove assertion (c).
Suppose that $c_{i} \neq 0$.
Then it follows from assertion (b) that $y_{2i+1},\ y_{2i+2} \in \Ker(\Tr_{k/k^{(d=1)}})$.
Moreover, it follows from the definition of the map $\Out(G_{k}) \to \Aut_{\Q_{p_{k}}}(k_{+})$ that the following equality holds:
\begin{align*}
    (\varphi^{\prime\prime}_{i})_{+}(v)-v=(c_{i+1}-c_{i})(y_{2i+2}-y_{2i+4}).
\end{align*}
Therefore, it follows from Lemma \ref{preservation Ker(Tr)} that we obtain that
\begin{align*}
    y_{2i+4}=(\varphi^{\prime\prime}_{i})_{+}(y_{2i+1})-y_{2i+1}+y_{2i+2} \in \Ker(\Tr_{k/k^{(d=1)}}).
\end{align*} 
Consequently, it follows from assertion (a) that $y_{2i+3} \in \Ker(\Tr_{k/k^{(d=1)}})$.
This completes the proof of assertion (c).

Next, we prove assertion (d).
Suppose that $c_{i+1} \neq 0$.
Then it follows from assertion (b) that $y_{2i+3},\ y_{2i+4} \in \Ker(\Tr_{k/k^{(d=1)}})$.
Moreover, it follows from the definition of the map $\Out(G_{k}) \to \Aut_{\Q_{p_{k}}}(k_{+})$ that the following equality holds:
\begin{align*}
    (\varphi^{\prime\prime}_{i})_{+}(v)-v=(c_{i+1}-c_{i})(y_{2i+2}-y_{2i+4}).
\end{align*}
Therefore, it follows from Lemma \ref{preservation Ker(Tr)} that we obtain that
\begin{align*}
    y_{2i+2}=-(\varphi^{\prime\prime}_{i})_{+}(y_{2i+3})+y_{2i+3}+y_{2i+4} \in \Ker(\Tr_{k/k^{(d=1)}}).
\end{align*} 
Consequently, it follows from assertion (a) that $y_{2i+1} \in \Ker(\Tr_{k/k^{(d=1)}})$.
This completes the proof of assertion (d).

We now return to the proof of Theorem \ref{determination of Ker(Tr)} in the case where $d_{k}$ is even.
Since $\dim \Ker(\Tr_{k/k^{(d=1)}}) = d_k - 1 \geq 3$ [cf.\ the assumption that $d_{k} \geq 3$ and $d_{k}$ is even],  
it follows that there exists some $1 \leq i \leq g$ such that either $c_i \neq 0$ or $e_i \neq 0$.  
It follows from the above claim by a sort of induction that we have $y_3, \ldots, y_{d_k} \in \Ker(\Tr_{k/k^{(d=1)}})$.

We extend the linearly independent system $\{ y_{3},\ldots,y_{d_{k}}\}$ to a basis $\{ y_{3},\ldots,y_{d_{k}}, w\}$ of $\Ker(\Tr_{k/k^{(d=1)}})$ over $\Q_{p_{k}}$.
We may take $w$ as of the form $\alpha_{1}y_{1} + \alpha_{2}y_{2}$ where $\alpha_{1},\alpha_{2} \in \Q_{p_{k}}$.
If $\alpha_2 = 0$, then there is nothing to prove. 
Suppose that $\alpha_2 \neq 0$.
Let $\varphi$ denote the automorphism of $G_{k}$ defined similarly to ``$\varphi$'' that appears in the proof of the case $d_{k} = 2$.
Then it follows from Lemma \ref{preservation Ker(Tr)} and the definition of the map $\Out(G_{k}) \to \Aut_{\Q_{p_{k}}}(k_{+})$ that the following equality holds:
\begin{align*}
    \alpha_{2}y_{1}=\varphi_{+}(\alpha_{1}y_{1} + \alpha_{2}y_{2})-(\alpha_{1}y_{1} + \alpha_{2}y_{2}) \in \Ker (\Tr_{k/k^{(d=1)}}).
\end{align*}
This completes the proof of the case where $d_{k}$ is even.
\begin{enumerate}
    \item [(2)] The case where $d_{k}$ is odd.
\end{enumerate}
In this case, by applying a similar argument to the argument applied in (1), we obtain $y_{2}, \ldots, y_{d_{k}} \in \Ker(\Tr_{k/k^{(d=1)}})$.
This completes the proof of Theorem \ref{determination of Ker(Tr)}.
\end{proof}

\begin{rem}
    In traditional algorithmic anabelian geometry, non-canonical objects such as generators and relations have not played significant roles. 
    In contrast, Theorem \ref{determination of Ker(Tr)}—by fully exploiting generators and relations in addition to conventional methods—establishes a nontrivial connection between these generators and relations and the ring structure of $k$.
    For these reasons, Theorem \ref{determination of Ker(Tr)} is meaningful in its own right.
\end{rem}

\begin{lem}\label{O_{k^{(d=1)}}}
     Suppose that $p_{k}$ is odd, and that $d_{k}$ is even.
     Let $\varphi$ be the automorphism of $G_{k}$ defined by the following equalities [cf.\ Theorem \ref{generator and relation of group of MLF-type}]:
    \begin{align*}
    \varphi(\sigma)=\sigma,\ \varphi(\tau)=\tau,\ \varphi(x_{2})=x_{2}x_{1},\ \varphi(x_{i})=x_{i}\ (i \neq 2).
    \end{align*}
    Then the following assertions hold:
    \begin{enumerate}
        \item [(1)] For all $n \in \Z \setminus \{0\}$, it holds that $\varphi^{n}_{+}(k^{(d=1)}_{+}) \neq k^{(d=1)}_{+}$.
        \item [(2)] The intersection $\varphi^{\times}(\mathcal{O}_{k^{(d=1)}}^{\times}) \cap \mathcal{O}_{k^{(d=1)}}^{\times}$ is not open in $\mathcal{O}_{k^{(d=1)}}^{\times}$
    \end{enumerate}
\end{lem}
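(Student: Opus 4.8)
The plan is to reduce both assertions to one elementary computation in the basis $y_{1},\ldots,y_{d_{k}}$ of $k_{+}$ [cf.\ \cite{1}, Lemma 1.3]: first determine the action of $\varphi_{+}$ on this basis, then use Theorem \ref{determination of Ker(Tr)} to express the line $k^{(d=1)}_{+}\subset k_{+}$ in these coordinates, deduce (1) by comparing coordinates, and finally derive (2) from (1) by passing through $\log_{k}$.

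First I would record the effect of $\varphi$ on $k_{+}$. Note that $\varphi$ is a well-defined element of $\Aut(G_{k})$ by Theorem \ref{generator and relation of group of MLF-type}: it fixes $\sigma,\tau$ and every $x_{i}$ with $i\neq 2$, so relation (3) and every commutator in relation (4) other than $[x_{1},x_{2}]$ are preserved, while $[x_{1},x_{2}]$ is carried to $[x_{1},x_{2}x_{1}]=[x_{1},x_{2}]$, and $\varphi$ is invertible with inverse determined by $x_{2}\mapsto x_{2}x_{1}^{-1}$. Since the composite $P(G_{k})\hookrightarrow I(G_{k})\to\mathcal{O}^{\times}(G_{k})\to k_{+}$ defining the $y_{i}$ is a homomorphism into an additive group and intertwines $\varphi$ with $\varphi_{+}$ [cf.\ \S0, \textrm{Groups of MLF-type}; \cite{3}, Proposition 3.11], we get $\varphi_{+}(y_{i})=y_{i}$ for $i\neq 2$ and $\varphi_{+}(y_{2})=y_{1}+y_{2}$, hence $\varphi_{+}^{n}(y_{2})=ny_{1}+y_{2}$ and $\varphi_{+}^{n}(y_{i})=y_{i}$ $(i\neq 2)$ for every $n\in\Z$.

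Next I would locate $k^{(d=1)}_{+}$. By Theorem \ref{determination of Ker(Tr)}, $\Ker(\Tr_{k/k^{(d=1)}})$ is the $\Q_{p_{k}}$-span of the $y_{i}$ with $i\neq 2$; on the other hand $k^{(d=1)}_{+}$ is a one-dimensional $\Q_{p_{k}}$-subspace of $k_{+}$, and it is not contained in $\Ker(\Tr_{k/k^{(d=1)}})$ because $\Tr_{k/k^{(d=1)}}(1)=d_{k}\neq 0$. Hence a generator of $k^{(d=1)}_{+}$ has nonzero $y_{2}$-coordinate, so after rescaling $k^{(d=1)}_{+}=\langle v\rangle_{\Q_{p_{k}}}$ with $v=y_{2}+\sum_{i\neq 2}c_{i}y_{i}$, $c_{i}\in\Q_{p_{k}}$. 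For assertion (1): by the previous paragraph $\varphi_{+}^{n}(v)=v+ny_{1}$, and if $\varphi_{+}^{n}(k^{(d=1)}_{+})=k^{(d=1)}_{+}$ then $v+ny_{1}\in\langle v\rangle_{\Q_{p_{k}}}$; comparing $y_{2}$-coordinates forces the scalar to be $1$, whence $ny_{1}=0$ in $k_{+}$ and $n=0$.

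For assertion (2), which I would deduce from (1): since $p_{k}$ is odd, $\log_{k}$ maps $1+p_{k}\Z_{p_{k}}\subset\mathcal{O}_{k^{(d=1)}}^{\times}$ isomorphically onto the open subgroup $p_{k}\Z_{p_{k}}\subset k^{(d=1)}_{+}$. Suppose $U\stackrel{\mathrm{def}}{=}\varphi^{\times}(\mathcal{O}_{k^{(d=1)}}^{\times})\cap\mathcal{O}_{k^{(d=1)}}^{\times}$ were open in $\mathcal{O}_{k^{(d=1)}}^{\times}=\Z_{p_{k}}^{\times}$. Then $U\supseteq 1+p_{k}^{N}\Z_{p_{k}}$ for some $N\geq 1$, so $\log_{k}(U)\supseteq p_{k}^{N}\Z_{p_{k}}$ spans $k^{(d=1)}_{+}$ over $\Q_{p_{k}}$; but functoriality of the reconstruction algorithm gives $\log_{k}\circ\varphi^{\times}=\varphi_{+}\circ\log_{k}$ on $\mathcal{O}_{k}^{\times}$, whence
\begin{align*}
\log_{k}(U)\ \subseteq\ \log_{k}\bigl(\varphi^{\times}(\mathcal{O}_{k^{(d=1)}}^{\times})\bigr)\ =\ \varphi_{+}\bigl(\log_{k}(\mathcal{O}_{k^{(d=1)}}^{\times})\bigr)\ \subseteq\ \varphi_{+}(k^{(d=1)}_{+}).
\end{align*}
Therefore $k^{(d=1)}_{+}\subseteq\varphi_{+}(k^{(d=1)}_{+})$, so $\varphi_{+}(k^{(d=1)}_{+})=k^{(d=1)}_{+}$ by a dimension count, contradicting (1) with $n=1$; hence $U$ is not open. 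The only non-formal ingredients are the two naturality statements — that the reconstructed map $P(G_{k})\to k_{+}$ intertwines $\varphi$ with $\varphi_{+}$, and that $\log_{k}\circ\varphi^{\times}=\varphi_{+}\circ\log_{k}$ — both instances of the functoriality of the mono-anabelian algorithms of \cite{3}; this is the point to treat with care, while the remainder is linear algebra over $\Q_{p_{k}}$ together with the standard structure of $\mathcal{O}_{k}^{\times}$ in odd residue characteristic. I anticipate no serious obstacle.
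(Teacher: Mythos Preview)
Your proposal is correct and follows essentially the same line as the paper: for (1) both arguments use Theorem \ref{determination of Ker(Tr)} to see that a generator of $k^{(d=1)}_{+}$ has nonzero $y_{2}$-coordinate and then compute $\varphi_{+}^{n}$ directly on the basis, and for (2) both reduce to (1) via the one-dimensionality of $k^{(d=1)}_{+}$ after passing through $\log_{k}$ --- the paper packages this last passage via the perfection construction of \cite{3}, Lemma 1.2, whereas you unwind it by hand, but the content is identical.
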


\begin{proof}
   First, we verify assertion (1).
   Since $d_{k}$ is even, it follows from Theorem \ref{determination of Ker(Tr)} that $\Ker(\Tr_{k/k^{(d=1)}}) = \langle y_{1},y_{3},\ldots,y_{d_{k}} \rangle_{\Q_{p_{k}}}$.
    Moreover, since $k^{(d=1)}_{+}$ is torsion-free, and $\Tr_{k/k^{(d=1)}}(x) = d_k x$ for any $x \in k^{(d=1)}_{+}$, it follows that
    \begin{align*}
        \Ker(\Tr_{k/k^{(d=1)}}) \cap k^{(d=1)}_{+} = 0.
    \end{align*}
    Thus, the 1-dimensional sub-$\Q_{p_{k}}$-vector space $k^{(d=1)}_{+}$ has a basis consisting of an element of the form $\alpha_1y_1 + y_2 + \alpha_{3}y_{3}+ \cdots + \alpha_{d_k}y_{d_k}$, where $\alpha_{i}$ is an element of $\Q_{p_{k}}$ for each $i \in \{1,2,\ldots,d_{k}\}$.
    Then one verifies easily from the definition of $\varphi$ that for $n \in \Z \setminus \{0\}$
    \begin{align*} (\varphi^{n})_{+}(\alpha_{1}y_{1}+y_{2}+ \alpha_{3}y_{3}+ \cdots+\alpha_{d_{k}} y_{d_{k}}) = (\alpha_{1}+n)y_{1}+y_{2} +\alpha_{3}y_{3}+\cdots+\alpha_{d_{k}}y_{d_{k}} \notin k^{(d=1)}_{+}.
    \end{align*}
    This completes the proof of assertion (1).

   Next, we verify assertion (2).
   Since $\mathcal{O}_{k^{(d=1)}}^{\times}$ is a profinite group, to show that the subgroup $\varphi^{\times}(\mathcal{O}_{k^{(d=1)}}^{\times}) \cap \mathcal{O}_{k^{(d=1)}}^{\times} \subset \mathcal{O}_{k^{(d=1)}}^{\times}$ is not open in $\mathcal{O}_{k^{(d=1)}}^{\times}$, it suffices to show that it is not a subgroup of finite index.
   Moreover, it follows from \cite{3}, Lemma 1.2, $\rm(\hspace{.18em}i\hspace{.18em})$, $\rm(i\hspace{-.08em}v\hspace{-.06em})$, and the definition of perfection [cf.\ \S 0, Notational conventions, Monoids] that it suffices to show $\varphi_{+}(k^{(d=1)}_{+}) \cap k^{(d=1)}_{+} = 0$ in order to prove that the subgroup 
   \begin{align*}
       \varphi^{\times}(\mathcal{O}_{k^{(d=1)}}^{\times}) \cap \mathcal{O}_{k^{(d=1)}}^{\times} \subset \mathcal{O}_{k^{(d=1)}}^{\times}
   \end{align*}
   is not of finite index. 
    It immediately follows from assertion (1) and the fact that the subspace $k_{+}^{(d=1)}$ is a one-dimensional $\Q_{p_k}$-vector space.  
    Thus, the subgroup $\varphi^{\times}(\mathcal{O}_{k^{(d=1)}}^{\times}) \cap \mathcal{O}_{k^{(d=1)}}^{\times}$ is not of finite index in $\mathcal{O}_{k^{(d=1)}}^{\times}$.
    This completes the proof of assertion (2).
\end{proof}

\begin{dfn}
    We shall say that the MLF $k$ is an \textit{absolutely Galois} MLF if the extension $k/k^{(d=1)}$ is a Galois extension. 
\end{dfn}

\begin{dfn}
    Let $V$ be a $\Q_{p_{k}}$-vector space of finite dimension and $\rho \colon G_{k} \to \Aut_{\Q_{p_{k}}}(V)$ a continuous representation.
    Then we shall say that $\rho$ is \textit{Aut-intrinsically Hodge-Tate} if, for an arbitrary [continuous] automorphism $\alpha$ of $G_{k}$, the composite $\rho \circ \alpha \colon G_{k} \to \Aut_{\Q_{p_{k}}}(V)$ is Hodge-Tate.
\end{dfn}

\begin{dfn}
    Suppose that $k$ is an absolutely Galois MLF.
    Let $\pi \in \mathcal{O}_{k}$  be a uniformizer of $\mathcal{O}_{k}$ and $\sigma$ an element of $\Gal(k/k^{(d=1)})$.
    Then we shall write
    \begin{align*}
        \chi_{\pi,\sigma} \colon G_{k}^{\ab} \stackrel{\rec_{k}^{-1}}{\to} \widehat{k^{\times}} \twoheadrightarrow \mathcal{O}_{k}^{\times} \stackrel{\sigma}{\to} \mathcal{O}_{k}^{\times},
    \end{align*}
   where the second arrow is the projection determined by $\pi$.
\end{dfn}

With the above preparations, we now prove the first main theorem of the present paper.

\begin{thm}\label{Aut-intristic Hodge-Tate}
    Suppose that $p_{k}$ is odd, that $d_{k}$ is even, and that $k$ is an absolutely Galois MLF.
     Then there exist a $\Q_{p_{k}}$-vector space $V$ of dimension $d_{k}$, a continuous representation $\rho \colon G_{k }\to \Aut_{\Q_{p_{k}}}(V)$ that is irreducible, abelian, and crystalline [hence also Hodge-Tate], but not Aut-intrinsically Hodge-Tate.
\end{thm}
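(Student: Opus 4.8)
The plan is to deduce the statement from Lemma \ref{O_{k^{(d=1)}}} (hence, ultimately, from Theorem \ref{determination of Ker(Tr)}) together with the classical structure theory of abelian Hodge-Tate characters, in the spirit of \cite{11}. First I would fix a uniformizer $\pi$ of $\mathcal{O}_k$, let
\begin{align*}
\psi\colon G_k^{\ab}\xrightarrow{\ \rec_k^{-1}\ }\widehat{k^{\times}}\twoheadrightarrow\mathcal{O}_k^{\times}
\end{align*}
be the composite of the inverse of $\rec_k$ with the projection determined by $\pi$ --- i.e., the character ``$\chi_{\pi,\id}$'' --- and take $V\stackrel{\mathrm{def}}{=}k_{+}$, with $G_k$ acting $k$-linearly through multiplication by $\psi$ (composed with $G_k\twoheadrightarrow G_k^{\ab}$). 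Then $\dim_{\Q_{p_k}}V=d_k$; the resulting $\rho$ is abelian, since it factors through $G_k^{\ab}$; it is irreducible, since $\rho(G_k)$ consists of the multiplication operators by the elements of $\psi(G_k)=\mathcal{O}_k^{\times}$, whose $\Q_{p_k}$-linear span is the field $k$ [as $1+\mathfrak{m}_k\subseteq\mathcal{O}_k^{\times}$ while $\mathfrak{m}_k$ spans $k_{+}$ over $\Q_{p_k}$], acting irreducibly on the one-dimensional $k$-vector space $V=k_{+}$; and it is crystalline, since $\psi$ coincides, up to sign, with the Lubin-Tate character attached to $\pi$ [equivalently, $V$ is, up to a twist, the rational $\pi$-adic Tate module of the Lubin-Tate formal group over $\mathcal{O}_k$ associated with $\pi$, which is crystalline].

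Next I would take $\varphi\in\Aut(G_k)$ to be exactly the automorphism of Lemma \ref{O_{k^{(d=1)}}}, so that $\varphi_{+}(k^{(d=1)}_{+})\neq k^{(d=1)}_{+}$. Writing $\varphi^{\ab}$ for the automorphism of $G_k^{\ab}$ induced by $\varphi$, the composite $\rho\circ\varphi$ factors through $G_k^{\ab}$ and corresponds to the character $\psi\circ\varphi^{\ab}\colon G_k^{\ab}\to k^{\times}$; moreover, by the construction of $\psi$ and of the mono-anabelian reconstruction homomorphisms $\varphi^{\times}$, $\varphi_{+}$ [cf.\ \S0, \textrm{Groups of MLF-type}; \cite{3}, Lemma 1.2], one has
\begin{align*}
\log_{k}\!\big((\psi\circ\varphi^{\ab})(\rec_k(u))\big)=\log_{k}\!\big(\varphi^{\times}(u)\big)=\varphi_{+}(\log_{k}u)\qquad(u\in\mathcal{O}_k^{\times}).
\end{align*}
Now I would argue by contradiction: suppose $\rho\circ\varphi$ is Hodge-Tate. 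After extending scalars along $k\hookrightarrow\widehat{\overline{k}}$, the representation $\rho\circ\varphi$ decomposes as the direct sum, over the $d_k$ embeddings $\sigma\colon k\hookrightarrow\overline{k}$ over $k^{(d=1)}$ --- which, since $k$ is an absolutely Galois MLF, are precisely the elements of $\Gal(k/k^{(d=1)})$ --- of the characters $\sigma\circ\psi\circ\varphi^{\ab}$; hence each of these is Hodge-Tate, in particular so is $\psi\circ\varphi^{\ab}$ itself. By the classical theory of Hodge-Tate characters of MLFs [cf., e.g., \cite{11} and the references therein], $\psi\circ\varphi^{\ab}$ is then locally algebraic: there exist integers $(m_{\sigma})_{\sigma\in\Gal(k/k^{(d=1)})}$ with $(\psi\circ\varphi^{\ab})(\rec_k(u))=\prod_{\sigma}\sigma(u)^{m_{\sigma}}$ for all $u$ in some open subgroup of $\mathcal{O}_k^{\times}$. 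Applying $\log_{k}$ and using the displayed identity, the two $\Q_{p_k}$-linear endomorphisms $\varphi_{+}$ and $v\mapsto\sum_{\sigma}m_{\sigma}\sigma(v)$ of $k_{+}$ agree on an open subgroup of $k_{+}$, hence coincide. But each $\sigma\in\Gal(k/k^{(d=1)})$ fixes $k^{(d=1)}_{+}$ pointwise, so $v\mapsto\sum_{\sigma}m_{\sigma}\sigma(v)$ carries $k^{(d=1)}_{+}$ into itself; therefore $\varphi_{+}(k^{(d=1)}_{+})\subseteq k^{(d=1)}_{+}$, and hence $\varphi_{+}(k^{(d=1)}_{+})=k^{(d=1)}_{+}$ by bijectivity of $\varphi_{+}$ --- contradicting Lemma \ref{O_{k^{(d=1)}}}, (1). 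Thus $\rho\circ\varphi$ is not Hodge-Tate, i.e., $\rho$ is not Aut-intrinsically Hodge-Tate, as desired.

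The main obstacle is already behind us: it is Lemma \ref{O_{k^{(d=1)}}} --- hence Theorem \ref{determination of Ker(Tr)} --- which produces an automorphism of $G_k$ whose associated $\Q_{p_k}$-linear automorphism of $k_{+}$ genuinely moves the line $k^{(d=1)}_{+}$; this is exactly where the hypotheses ``$p_k$ odd'' and ``$d_k$ even'' are used. The remaining ingredients are standard facts from $p$-adic Hodge theory: that an abelian Hodge-Tate character of an MLF is locally algebraic, and the compatibility --- arising from the reconstruction of $k_{+}$ from $\mathcal{O}_k^{\times}$ via the $p_k$-adic logarithm --- between $\varphi_{+}$ and the ``derivative on the units'' of $\psi\circ\varphi^{\ab}$. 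One should also verify the normalization identifying $\chi_{\pi,\id}$ with a sign power of the Lubin-Tate character, so that the crystallinity of $\rho$ is unambiguous; this is classical.
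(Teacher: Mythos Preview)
Your proposal is correct and follows essentially the same route as the paper: the same Lubin--Tate--type representation $\rho$ on $V=k_{+}$ via $\chi_{\pi,\id}$, the same automorphism $\varphi$ from Lemma \ref{O_{k^{(d=1)}}}, and the same contradiction mechanism. The only difference is in packaging: where the paper invokes \cite{11}, Lemma 1.9, as a black box to conclude that an open subgroup of $\mathcal{O}_{k^{(d=1)}}^{\times}$ is carried into $\mathcal{O}_{k^{(d=1)}}^{\times}$ and then appeals to part (2) of Lemma \ref{O_{k^{(d=1)}}}, you instead unpack that lemma into its underlying ingredient --- the local algebraicity of abelian Hodge--Tate characters --- pass to $k_{+}$ via $\log_k$, and derive the contradiction directly from part (1). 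Since part (2) is itself deduced from part (1), the two arguments are equivalent; your version is slightly more self-contained.
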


\begin{proof}
    Let $\pi \in \mathcal{O}_{k}$ be a uniformizer of $\mathcal{O}_{k}$.
    We write $\rho$ for the continuous representation of $G_{k}$  [necessarily of dimension $d_{k}$] obtained by forming the composite
    \begin{align*}
        G_{k} \twoheadrightarrow G_{k}^{\ab} \stackrel{\chi_{\pi,\id_{k}}}{\to} \mathcal{O}_{k}^{\times} \hookrightarrow \Aut_{\Q_{p_{k}}}(k_{+}),
    \end{align*}
     where the first arrow is the natural surjective continuous homomorphism, and the third arrow is the natural inclusion.
     Then one verifies easily that this continuous representation $\rho$ is irreducible and abelian.
     
     Next, we show that $\rho$ is a crystalline representation.
     Let $F_{\pi}$ be the Lubin-Tate formal group law associated with the Lubin-Tate formal power series $\pi t+t^{{p_{k}}^{f_{k}}}$.
     Then $\rho$ is isomorphic to the continuous $p_{k}$-adic representation induced by the action on the $p_{k}$-adic Tate module of $F_{\pi}$ [cf., e.g., \cite{12}, $\rm I\hspace{-.15em}I\hspace{-.15em}I$, \S A.4, Proposition 4].
     Since $F_{\pi}$ is defined over $\mathcal{O}_{k}$, the corresponding $p_{k}$-divisible group is also defined over $\mathcal{O}_{k}$.
     It is well-known [cf.\ \cite{15}, Theorem 6.2, $\rm\hspace{.18em}i\hspace{.18em})$] that the continuous $p_{k}$-adic representation induced by the action on the $p_{k}$-adic Tate module of $F_{\pi}$ is crystalline.
     Thus, the representation $\rho$ is a crystalline representation.
     
     Next, we show by contradiction that the continuous representation $\rho \circ \varphi$ is not Hodge–Tate, where $\varphi$ is the automorphism of $G_{k}$ defined in the statement of Lemma \ref{O_{k^{(d=1)}}}.
     Let us recall that it follows immediately from the various definitions involved that the composite
     \begin{align*}
         \mathcal{O}_{k}^{\times} \stackrel{\rec_{k}}{\hookrightarrow} G_{k}^{\ab} \stackrel{\chi_{\pi,\id_{k}}}{\to} \mathcal{O}_{k}^{\times}
     \end{align*}
     is an automorphism that restricts to an automorphism of the subgroup $\mathcal{O}_{k^{(d=1)}}^{\times} \subset \mathcal{O}_{k}^{\times}$.
    We write $\varphi^{\ab} \colon G_{k}^{\ab} \stackrel{\sim}{\longrightarrow} G_{k}^{\ab}$ for the automorphism of the topological group $G_{k}^{\ab}$ induced by $\varphi$.
     It follows from the assumption of the contradiction that $\rho \circ \varphi$ is Hodge–Tate.
     Thus, it follows from \cite{11}, Lemma 1.9, that there exists an open subgroup $U \subset \mathcal{O}_{k^{(d=1)}}^{\times}$ whose image by the composite
     \begin{align*}
         \mathcal{O}_{k^{(d=1)}}^{\times} \hookrightarrow \mathcal{O}_{k}^{\times} \hookrightarrow G_{k}^{\ab} \to \mathcal{O}_{k}^{\times},
     \end{align*}
     where the first arrow is a natural injective homomorphism, the second arrow is the injective homomorphism obtained by restricting the local reciprocity homomorphism of $k$, and the third arrow is $\chi_{\pi,\id_{k}} \circ \varphi^{\ab}$, is contained in the subgroup $\mathcal{O}_{k^{(d=1)}}^{\times} \subset \mathcal{O}_{k}^{\times}$.
     Thus, it follows from the various definitions involved that $\varphi^{\times}(U) \subset \mathcal{O}_{k^{(d=1)}}^{\times}$ and is an open subgroup of $\mathcal{O}_{k^{(d=1)}}^{\times}$.
     In particular, we obtain that $\varphi^{\times}(U) \subset \varphi^{\times}(\mathcal{O}_{k^{(d=1)}}^{\times}) \cap \mathcal{O}_{k^{(d=1)}}^{\times}$.
    However, this contradicts Lemma \ref{O_{k^{(d=1)}}}, (2), since $\varphi^{\times}(U) \subset \mathcal{O}_{k^{(d=1)}}^{\times}$ is an open subgroup of $\mathcal{O}_{k^{(d=1)}}^{\times}$.
     Thus, the composite $\rho \circ \varphi \colon G_{k} \to \Aut_{\Q_{p_{k}}}(k_{+})$ is not Hodge-Tate.
     This completes the proof of Theorem \ref{Aut-intristic Hodge-Tate}.
\end{proof}

\begin{rem}\label{odd case}
    In the case where $d_k$ is odd, a proof analogous to that of Theorem \ref{Aut-intristic Hodge-Tate} would require an element $\alpha \in \Aut(G_{k})$ such that $\alpha_{+}(y_1) \neq y_1$.
    However, at the time of writing, it is not clear to the author whether or not such an element exists.
    For this reason, the assumption that $d_k$ is even is imposed in Theorem \ref{Aut-intristic Hodge-Tate}.

\end{rem}

\begin{rem}\label{Aut-intristic  Hodge-Tate vs Hodge-Tate}
    Suppose that $d_{k}=1$ and that $p_{k}$ is odd. 
    As observed in the proof of Theorem \ref{Aut-intristic Hodge-Tate}, there exists an irreducible, abelian, and crystalline [hence, Hodge-Tate] one-dimensional continuous representation $\rho_{0} \colon G_k \to \Aut_{\Q_{p_k}}(V)$.
    As shown in \cite{11}, Theorem A, when a continuous  $p_{k}$-adic representation $\rho \colon G_k \to \Aut_{\Q_{p_k}}(V)$ is one-dimensional, or, even, two-dimensional and reducible, the property that $\rho$ is Hodge-Tate is equivalent to the property that $\rho$ is Aut-intrinsically Hodge-Tate.
    Thus, this representation $\rho_{0} \colon G_k \to \Aut_{\Q_{p_k}}(V)$ is Aut-intrinsically Hodge-Tate.
    
    On the other hand, it is well-known [cf., e.g., the discussion given at the final portion of \cite{6}, Chapter V\hspace{-1.2pt}I\hspace{-1.2pt}I, \S5] that there exists an outer automorphism of $G_k$ that is not induced by any field automorphism of $k$.
    Thus, it follows from \cite{4}, Corollary 3.4, that there exists a continuous representation of $G_{k}$ that is Hodge-Tate but not Aut-intrinsically Hodge-Tate.
It follows that the following inclusion holds:
\begin{align*}
    \emptyset \neq \{ \text{Aut-intrinsically Hodge-Tate}\}\subsetneq \{\text{Hodge-Tate}\}.
\end{align*}
\end{rem}

\begin{rem}
   Let $\chi \colon G_{k} \to \Q_{p_{k}}^{\times}$ be a nontrivial continuous character which factors through $\Gal(K/k)$ for some totally ramified finite extension $K/k$ of degree greater than one.
Then we observe that the representation $\rho^{\prime}$ defined by $\chi$ is de Rham [hence also Hodge-Tate] but not crystalline as follows.
In fact, the de Rham-ness of $\rho^{\prime}$ follows from the facts that a continuous $p_k$-adic representation is de Rham if and only if its restriction to an open subgroup is de Rham, together with the fact that the trivial representation is de Rham.
Next, we show that the representation $\rho^{\prime}$ is not crystalline.
Let us write $G_{K}\stackrel{\mathrm{def}}{=} \Gal(\overline{k}/K)$ for the absolute Galois group of $K$ determined by $\overline{k}$.
Since $K/k$ is a totally ramified extension, we have $B_{\mathrm{cris}}^{G_K} \cong k_0$, where $B_{\mathrm{cris}}$ denotes the crystalline period ring, and $k_0$ is the maximal unramified subextension of $k/k^{(d=1)}$.
Moreover, by the definition of $\chi$, we have $\Q_{p_k}(\chi)^{G_K} = \Q_{p_k}(\chi)$ and $\Q_{p_k}(\chi)^{\Gal(K/k)} = 0$, where $\Q_{p_k}(\chi)$ denotes the $\Q_{p_k}[G_k]$-module whose underlying $\Q_{p_k}$-vector space is $\Q_{p_k}$ and on which $G_k$ acts via $\chi$.
Thus, we have
\begin{align*}
    (\Q_{p_{k}}(\chi) \otimes_{\Q_{p_{k}}} B_{\mathrm{cris}})^{G_{k}}
    \cong &((\Q_{p_{k}}(\chi) \otimes_{\Q_{p_{k}}} B_{\mathrm{cris}})^{G_{K}})^{\Gal(K/k)} \cong (\Q_{p_{k}}(\chi) \otimes_{\Q_{p_{k}}} k_{0})^{\Gal(K/k)}=0.
\end{align*}
In particular, the representation $\rho^{\prime}$ is not crystalline.

We note that any one-dimensional continuous linear representation that is Hodge-Tate is necessarily Aut-intrinsically Hodge-Tate [cf.\ \cite{11}, Theorem A]. In particular, the representation $\rho^{\prime}$ is Aut-intrinsically Hodge-Tate but not crystalline.
Thus, there is no inclusion relation in either direction between the classes of Aut-intrinsically Hodge-Tate representations and crystalline representations.
\end{rem}

\begin{rem}
    An interesting problem related to Aut-intrinsic Hodge-Tate-ness is whether, given an automorphism $\alpha \colon G_{k} \stackrel{\sim}{\longrightarrow} G_{k}$, one can suitably formulate, by means of $\alpha$, the difference between the class of continuous representations of $G_k$ that are Hodge-Tate and the class of continuous representations of $G_k$ that are Hodge-Tate but cease to be so after composition with $\alpha$.
    However, at the time of writing the present paper, this problem remains unexplored.
\end{rem}

\clearpage

\section{On the non-normality of the field-theoretic subgroups of the outer automorphism groups of the absolute Galois groups of absolutely Galois MLFs}

In the present \S2, let $k$ be an MLF, $\overline{k}$ an algebraic closure of $k$, and $G$ a group of MLF-type.
In the present \S2, we discuss the outer automorphism groups of the absolute Galois groups of certain MLFs.
In particular, we show that if $p_{k}$ is odd, $d_{k}$ is greater than one, and $k$ is an absolutely Galois MLF, i.e., the extension $k/k^{(d=1)}$ is a Galois extension, then the field-theoretic subgroup $\Aut(k) \subset \Out(G_{k})$ is not normal.
This result generalizes certain theorems in \cite{1} and \cite{2}.

\begin{dfn}\label{field-theoritic subgroup}
    We shall refer to the image of the natural injective group homomorphism $\Aut(k) \hookrightarrow \Out(G_{k})$ [cf., e.g., \cite{3}, Proposition 2.1] as the \textit{field-theoretic subgroup} of $\Out(G_{k})$.
\end{dfn}

\begin{rem}\label{group-theoretic but not ring theoretic isom}
    It is well-known that the isomorphism class of the field $k$ itself cannot be reconstructed group-theoretically from $G_k$ in general.
    For example, let $p$ be an odd prime number, $k_0 \stackrel{\mathrm{def}}{=} \Q_p(\zeta_p, \sqrt[p]{p})$ and $k_1 \stackrel{\mathrm{def}}{=} \Q_p(\zeta_p, \sqrt[p]{p+1})$, where $\zeta_{p}$ is a primitive $p$-th root of unity.
    Fix an algebraic closure $\overline{k_0}$ of $k_0$ and an algebraic closure $\overline{k_1}$ of $k_1$.
    Then it follows from the main theorem of \cite{10} that the group $G_{k_0}\stackrel{\mathrm{def}}{=}\Gal(\overline{k_{0}}/k_{0})$ is isomorphic to the group $G_{k_1}\stackrel{\mathrm{def}}{=}\Gal(\overline{k_{1}}/k_{1})$, while the field $k_0$ is not isomorphic to the field $k_1$.
    
    Furthermore, the following observation illustrates, in a stronger sense, that a group isomorphism $G_{k_0} \cong G_{k_1}$ is not compatible with their ring structures.
    Let $\varphi \colon G_{k_1} \stackrel{\sim}{\longrightarrow} G_{k_0}$ be an arbitrary group isomorphism, and let $\Phi \colon \Out(G_{k_1}) \stackrel{\sim}{\longrightarrow} \Out(G_{k_0})$ denote the induced group isomorphism.
    Then it follows from \cite{2}, Corollary 8.6, that $\Phi(\Gal(k_1/\Q_{p}(\zeta_{p}))) \not\subset N_{\Out(G_{k_0})}(\Aut(k_0))$.
    In particular, the subgroup $\Aut(k_{0}) \subset \Out(G_{k_{0}})$ is not a normal subgroup of $\Out(G_{k_{0}})$.
\end{rem}

It follows from Lemma \ref{O_{k^{(d=1)}}}, (2), that there exists an outer automorphism $\alpha \in \Out(G_{k})$ such that the automorphism $\alpha^{\times} \colon k^{\times} \stackrel{\sim}{\longrightarrow} k^{\times}$ induced by the mono-anabelian reconstruction algorithm does not preserve the subgroup $(k^{(d=1)})^{\times} \subset k^{\times}$.
In particular, the subgroup $(k^{(d=1)})^{\times} \subset k^{\times}$  cannot be reconstructed group-theoretically and functorially from the group $G_{k}$ in general.
However, we will show that if $k$ is absolutely Galois and $\alpha \in N_{\Out(G_{k})}(\Aut(k))$, then the induced isomorphism $\alpha^{\times}$ preserves the subgroup $(k^{(d=1)})^{\times} \subset k^{\times}$ [cf.\ Theorem \ref{Q^{×}-preserving} below].

\begin{dfn}
    Let $\Pi$ be a profinite group.
    Then we shall say that $\Pi$ is \textit{slim} if for every open subgroup $\Gamma$ of $\Pi$, the equality $Z_{\Pi}(\Gamma) = {1}$ holds.
\end{dfn}
A group of MLF-type is an example of a slim profinite group [cf., e.g., \cite{3}, Proposition 1.8].

\begin{lem}\label{slim profinite group}
    Let $\Pi$ be a slim profinite group, and let $\Gamma \subset \Pi$ be an open normal subgroup of $\Pi$.
    We write $\Aut_{\Gamma}(\Pi)$ for the subgroup of $\Aut(\Pi)$ consisting of elements $\varphi$ such that $\varphi(\Gamma) = \Gamma$.
    Then the following assertions hold:
    \begin{enumerate}
    \item [(1)] The restriction map
    \begin{align*}
        \Aut_{\Gamma}(\Pi) \to \Aut(\Gamma);\ \varphi \mapsto \varphi|_{\Gamma}
    \end{align*}
    is an injective group homomorphism.
    \item [(2)]  The outer action $\Pi/\Gamma \to \Out(\Gamma)$, which is defined by the natural exact sequence of groups
    \begin{align*}
        1 \to \Gamma \to \Pi \to \Pi/\Gamma \to 1,
    \end{align*}
    is injective.
    \item [(3)] We regard $\Pi/\Gamma$ as a subgroup of $\Out(\Gamma)$ via the outer action $\Pi/\Gamma \hookrightarrow \Out(\Gamma)$ [cf.\ assertion (2)] and $\Aut_{\Gamma}(\Pi)/\Inn(\Gamma)$ as a subgroup of $\Out(\Gamma)$ via the natural injection $\Aut_{\Gamma}(\Pi)/\Inn(\Gamma) \hookrightarrow \Out(\Gamma)$ [cf.\ assertion (1)].
    Then the following equality holds:
    \begin{align*}
        N_{\Out(\Gamma)}(\Pi/\Gamma)=\Aut_{\Gamma}(\Pi)/\Inn(\Gamma).
    \end{align*}
    \end{enumerate} 
\end{lem}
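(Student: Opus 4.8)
The statement is a standard package of facts about slim profinite groups and the normalizer/outer-action formalism, so I would prove the three assertions in order, using slimness only where it is genuinely needed (assertions (1) and (2)) and then combining them formally for (3). Throughout, $\Gamma \subset \Pi$ is open and normal, and I write $\Inn(\Gamma)$ for the image of $\Gamma$ in $\Aut(\Gamma)$ via conjugation; note $\Inn(\Gamma)$ is a normal subgroup of $\Aut_\Gamma(\Pi)/\!\!\sim$ once we know the restriction is injective, since conjugation by elements of $\Gamma$ is visibly in the image of the restriction map.

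\textbf{Assertion (1).} The restriction map $\Aut_\Gamma(\Pi)\to\Aut(\Gamma)$ is clearly a group homomorphism, so only injectivity requires an argument. Suppose $\varphi\in\Aut_\Gamma(\Pi)$ restricts to the identity on $\Gamma$. For any $g\in\Pi$ and any $h\in\Gamma$, since $\Gamma$ is normal we have $ghg^{-1}\in\Gamma$, hence
\[
\varphi(g)\,h\,\varphi(g)^{-1} = \varphi(g)\,\varphi(h)\,\varphi(g)^{-1} = \varphi(ghg^{-1}) = ghg^{-1}.
\]
Therefore $\varphi(g)^{-1}g$ centralizes $\Gamma$, i.e. $\varphi(g)^{-1}g\in Z_\Pi(\Gamma)$. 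Since $\Gamma$ is open and $\Pi$ is slim, $Z_\Pi(\Gamma)=\{1\}$, so $\varphi(g)=g$; as $g$ was arbitrary, $\varphi=\id_\Pi$. This gives injectivity.

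\textbf{Assertion (2).} The outer action $\Pi/\Gamma\to\Out(\Gamma)$ sends the class of $g\in\Pi$ to the class of the conjugation automorphism $c_g\colon h\mapsto ghg^{-1}$ of $\Gamma$. If $g\in\Pi$ maps to the trivial class in $\Out(\Gamma)$, then $c_g\in\Inn(\Gamma)$, so there is $\gamma\in\Gamma$ with $c_g=c_\gamma$ as automorphisms of $\Gamma$; equivalently $\gamma^{-1}g\in Z_\Pi(\Gamma)=\{1\}$ by slimness and openness of $\Gamma$, so $g=\gamma\in\Gamma$, i.e. the class of $g$ in $\Pi/\Gamma$ is trivial. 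Hence the outer action is injective.

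\textbf{Assertion (3).} Regard both $\Pi/\Gamma$ and $\Aut_\Gamma(\Pi)/\Inn(\Gamma)$ as subgroups of $\Out(\Gamma)$ via assertions (2) and (1). The inclusion $\Aut_\Gamma(\Pi)/\Inn(\Gamma)\subset N_{\Out(\Gamma)}(\Pi/\Gamma)$ is the only part with real content in the ``$\supseteq$'' direction of normalization, and the reverse inclusion $N_{\Out(\Gamma)}(\Pi/\Gamma)\subset\Aut_\Gamma(\Pi)/\Inn(\Gamma)$ is where one must actually construct an extension of an automorphism of $\Gamma$ to $\Pi$. For ``$\subseteq$'': take $\varphi\in\Aut_\Gamma(\Pi)$ and $g\in\Pi$; then in $\Aut(\Gamma)$ we compute $\varphi|_\Gamma\circ c_g\circ(\varphi|_\Gamma)^{-1}=c_{\varphi(g)}$ (check on $h\in\Gamma$: both send $h\mapsto\varphi(g\,\varphi^{-1}(h)\,g^{-1})=\varphi(g)\,h\,\varphi(g)^{-1}$, using $\varphi(\Gamma)=\Gamma$), so conjugation by the class of $\varphi|_\Gamma$ carries the subgroup $\Pi/\Gamma$ into itself; hence $\Aut_\Gamma(\Pi)/\Inn(\Gamma)$ normalizes $\Pi/\Gamma$. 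For ``$\supseteq$'': let $\bar\psi\in\Out(\Gamma)$ normalize $\Pi/\Gamma$, and pick a lift $\psi\in\Aut(\Gamma)$. Then for each $g\in\Pi$ there is $g'\in\Pi$ (unique modulo $\Gamma$, by assertion (2)) and $\gamma(g)\in\Gamma$ with $\psi\circ c_g\circ\psi^{-1}=c_{g'}\circ c_{\gamma(g)}=c_{g'\gamma(g)}$ in $\Aut(\Gamma)$. I would now build $\widetilde\psi\colon\Pi\to\Pi$ by $\widetilde\psi|_\Gamma=\psi$ and $\widetilde\psi(g)=g'\gamma(g)$ for a chosen system of coset representatives, and verify it is a well-defined homomorphism and bijective; the element $g'\gamma(g)$ is pinned down exactly because $Z_\Pi(\Gamma)=\{1\}$ forces $c_{g'\gamma(g)}$ to determine $g'\gamma(g)$. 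This $\widetilde\psi$ lies in $\Aut_\Gamma(\Pi)$, restricts to $\psi$, and its class in $\Out(\Gamma)$ is $\bar\psi$, giving the reverse inclusion and hence the equality.

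\textbf{Main obstacle.} The genuinely delicate point is the ``$\supseteq$'' inclusion in assertion (3): extending a normalizing outer automorphism of $\Gamma$ to an honest automorphism of $\Pi$. One must check that the assignment $g\mapsto$ (the unique element of $\Pi$ inducing $\psi c_g\psi^{-1}$ on $\Gamma$, in the correct $\Gamma$-coset) is multiplicative and continuous, and that it is independent of the choice of lift $\psi$ up to $\Inn(\Gamma)$. Slimness is the engine that makes the ``unique element'' language legitimate — it is used three times (in (1), in (2), and again here to rigidify the extension) — but assembling the extension into a continuous group homomorphism requires a small compatibility check with the profinite topology, which is where I would be most careful. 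In writing this up I expect one could alternatively phrase (3) cleanly via the commutative diagram relating $\Aut_\Gamma(\Pi)\to\Aut(\Gamma)$ to the conjugation action on the image of $\Pi/\Gamma$, deducing the normalizer equality by a diagram chase; but the construction of the extension is unavoidable in some form.
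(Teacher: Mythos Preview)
Your proposal is correct and follows essentially the same approach as the paper. The only notable difference is in assertion (3): rather than choosing a lift $\psi\in\Aut(\Gamma)$ of $\bar\psi$ and working with coset representatives, the paper first identifies $\Pi/\Gamma$ with $\Inn(\Pi)/\Inn(\Gamma)$ inside $\Out(\Gamma)$ and reduces the statement to the equality $N_{\Aut(\Gamma)}(\Inn(\Pi))=\Aut_\Gamma(\Pi)$ at the level of $\Aut(\Gamma)$; then for $\varphi\in N_{\Aut(\Gamma)}(\Inn(\Pi))$, the extension $\psi\colon\Pi\to\Pi$ is defined directly by $\psi(\pi)=\pi'$ where $\varphi\circ i_\pi|_\Gamma\circ\varphi^{-1}=i_{\pi'}|_\Gamma$, with uniqueness of $\pi'$ coming from slimness of $\Gamma$ (equivalently, injectivity of $\Pi\hookrightarrow\Aut(\Gamma)$). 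This packaging avoids the bookkeeping with $g'$, $\gamma(g)$, and coset representatives, but the underlying mechanism --- slimness forcing the extension to be uniquely and well-defined --- is exactly what you identified; your remark that ``$g'\gamma(g)$ is pinned down exactly because $Z_\Pi(\Gamma)=\{1\}$'' is precisely the paper's observation. The paper, incidentally, does not address the continuity issue you flag as the main obstacle; in the application ($\Pi=G_{k^{(d=1)}}$) this is moot since abstract automorphisms of groups of MLF-type are automatically continuous.
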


\begin{proof}
    First, we verify assertion (1).
    Let $\varphi \in \Aut_{\Gamma}(\Pi)$.
    Since $\Pi$ is slim, the group homomorphism $\Pi \to \Aut(\Gamma)$ by conjugation is injective.
    Moreover, it follows from various definitions involved that the following diagram commutes:
    \[
\begin{tikzcd}
\Pi \arrow[r, "\subset"] \arrow[d, "\varphi"]
&\Aut(\Gamma) \arrow[d, "\cong"] \\
\Pi \arrow[r, "\subset"]
& || \Aut(\Gamma),
\end{tikzcd}
    \]
    where the right-hand vertical arrow is the group homomorphism given conjugation by $\varphi|_{\Gamma}$.
   Thus, if $\varphi|_{\Gamma} = \id_{\Gamma}$, then it follows from the commutativity of this diagram and the injectivity of $\Pi \to \Aut(\Gamma)$ that $\varphi = \id_{\Pi}$.
   This completes the proof of assertion (1).
    
    Next, we verify assertion (2).
    Let $\pi \in \Pi$ be an element.
    We write $i_{\pi}$ for the inner automorphism of $\Pi$ determined by $\pi$.
    Suppose that the automorphism $i_{\pi}|_{\Gamma}$ of $\Gamma$ is an inner automorphism.
    Then there exists an element $\gamma_{0} \in \Gamma$ such that $i_{\pi}|_{\Gamma} = i_{\gamma_{0}}|_{\Gamma}$, where $i_{\gamma_{0}}$ denotes the inner automorphism of $\Pi$ determined by $\gamma_{0}$.
    Thus, it follows from the slimness of $\Pi$ and the injectivity of the map $\Aut_{\Gamma}(\Pi) \to \Aut(\Gamma)$, $\varphi \mapsto \varphi|_{\Gamma}$ [cf.\ assertion (1)], that $\pi = \gamma_{0} \in \Gamma$.
    This completes the proof of assertion (2).
    
    Next, we verify assertion (3).
    Since $\Pi$ is slim, and slimness is inherited by open subgroups, it follows that $\Gamma$ is also slim.
    In particular, the injection $\Pi/\Gamma \hookrightarrow \Out(\Gamma)$ of assertion (2) splits as the isomorphism $\Pi/\Gamma \stackrel{\sim}{\longrightarrow} \Inn(\Pi)/\Inn(\Gamma)$ and the inclusion $\Inn(\Pi)/\Inn(\Gamma) \hookrightarrow \Out(\Gamma)$.
    Thus, it is immediate that, to verify assertion (3), it suffices to show that $N_{\Aut(\Gamma)}(\Inn(\Pi)) = \Aut_{\Gamma}(\Pi)$.
    The inclusion $\Aut_{\Gamma}(\Pi) \subset N_{\Aut(\Gamma)}(\Inn(\Pi))$ is trivial.

    We now show the inclusion $N_{\Aut(\Gamma)}(\Inn(\Pi)) \subset \Aut_{\Gamma}(\Pi)$.
    Take an element $\varphi \in N_{\Aut(\Gamma)}(\Inn(\Pi))$.
    For each element $\pi \in \Pi$, there exists $\pi^{\prime} \in \Pi$ such that the following equality holds:
   \begin{align*}
       \varphi \circ (i_{\pi}|_{\Gamma}) \circ \varphi^{-1}=i_{\pi^{\prime}}|_{\Gamma},
   \end{align*}
   where we write $i_{\pi}$, $i_{\pi^{\prime}}$ for the inner automorphisms of $\Pi$ determined by $\pi$, $\pi^{\prime}$ respectively.
   Since $\Gamma$ is slim, the element $\pi^{\prime}$ is uniquely determined by $\pi$.
   Moreover, it is easy to verify that if $\pi \in \Gamma$, then $\pi^{\prime} = \varphi(\pi)$.
   Furthermore, it is also easy to verify that if we define the map $\psi \colon \Pi \to \Pi$ by $\psi(\pi) \stackrel{\mathrm{def}}{=} \pi^{\prime}$, then $\psi$ is an automorphism of $\Pi$ that satisfies $\psi|_{\Gamma} = \varphi$.
Thus, the desired inclusion follows.
\end{proof}

 By applying Lemma \ref{slim profinite group}, we obtain the following result:

\begin{prop}\label{ab. gal normalizer}
    Suppose that $k$ is an absolutely Galois MLF.
    We shall write $\Aut_{G_{k}} (G_{k^{(d=1)}})$ for the subgroup of $\Aut(G_{k^{(d=1)}})$ consisting of elements $\varphi$ such that $\varphi(G_{k}) = G_{k}$.
    Then we have, by considering restrictions, a natural homomorphism
    \begin{align*}
        \Aut_{G_{k}}(G_{k^{(d=1)}}) \to \Aut(G_{k}).
    \end{align*}
    Then the following assertions hold:
    \begin{enumerate}
        \item [(1)] The homomorphism $\Aut_{G_{k}}(G_{k^{(d=1)}}) \to \Aut(G_{k})$ is injective.
        In particular, we also have an injection $\Aut_{G_{k}}(G_{k^{(d=1)}})/\Inn(G_{k}) \hookrightarrow \Out(G_{k})$.
        Let us regard $\Aut_{G_{k}}(G_{k^{(d=1)}})$, $\Aut_{G_{k}}(G_{k^{(d=1)}})/\Inn(G_{k})$ as subgroups of $\Aut(G_{k})$, $\Out(G_{k})$ by means of these injections, respectively:
    \begin{align*}
        \Aut_{G_{k}}(G_{k^{(d=1)}}) \subset \Aut(G_{k}),\ \ \Aut_{G_{k}}(G_{k^{(d=1)}})/\Inn(G_{k}) \subset \Out(G_{k}).
    \end{align*}
    \item [(2)] The natural homomorphisms $G_{k^{(d=1)}} \to \Inn(G_{k^{(d=1)}}) \hookrightarrow \Aut(G_{k^{(d=1)}})$ determine an isomorphism
    \begin{align*}
        G_{k^{(d=1)}}/G_{k} \stackrel{\sim}{\longrightarrow} \Inn(G_{k^{(d=1)}})/\Inn(G_{k}).
    \end{align*}
     Let us identify $G_{k^{(d=1)}}/G_{k}$ with $\Inn(G_{k^{(d=1)}})/\Inn(G_{k})$  by means of this isomorphism
     \begin{align*}
G_{k^{(d=1)}}/G_{k}=\Inn(G_{k^{(d=1)}})/\Inn(G_{k})\ \  (\subset \Aut_{G_{k}}(G_{k^{(d=1)}})/\Inn(G_{k}) \subset \Out(G_{k})).
     \end{align*}
     \item [(3)] The following equality of subgroups of $\Out(G_{k})$ holds:
      \begin{align*}
          N_{\Out(G_{k})}(\Aut(k))=\Aut_{G_{k}}(G_{k^{(d=1)}})/\Inn(G_{k}).
      \end{align*}
    \end{enumerate}
\end{prop}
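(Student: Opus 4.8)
The plan is to deduce all three assertions from Lemma \ref{slim profinite group}, applied to the pair $\Pi = G_{k^{(d=1)}}$ and $\Gamma = G_k$. First I would verify the hypotheses: a group of MLF-type is slim [cf.\ \cite{3}, Proposition 1.8], so $G_{k^{(d=1)}}$ is a slim profinite group; and, since $k/k^{(d=1)}$ is by hypothesis Galois, $G_k$ is the kernel of the restriction homomorphism $G_{k^{(d=1)}} \twoheadrightarrow \Gal(k/k^{(d=1)})$, hence an open (of index $d_k$) normal subgroup of $G_{k^{(d=1)}}$. Granting these, assertion (1) is exactly Lemma \ref{slim profinite group}, (1). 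For its ``in particular'' part, I would note that if $\varphi \in \Aut_{G_k}(G_{k^{(d=1)}})$ is such that $\varphi|_{G_k} = i_{g_0}|_{G_k}$ for some $g_0 \in G_k$, then $\varphi \circ i_{g_0}^{-1}$ restricts to the identity on $G_k$, so $\varphi = i_{g_0}$ as automorphisms of $G_{k^{(d=1)}}$ by the injectivity just proved; hence the kernel of the composite $\Aut_{G_k}(G_{k^{(d=1)}}) \to \Aut(G_k) \twoheadrightarrow \Out(G_k)$ equals the image of $G_k$ under conjugation inside $G_{k^{(d=1)}}$, and this image is carried isomorphically onto $\Inn(G_k)$ by restriction (again by slimness of $G_{k^{(d=1)}}$), which yields the stated injection of quotients and fixes the identification ``$\Inn(G_k) \subset \Aut_{G_k}(G_{k^{(d=1)}})$''.

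For assertion (2), I would use that the canonical surjection $G_{k^{(d=1)}} \twoheadrightarrow \Inn(G_{k^{(d=1)}})$ is injective --- hence an isomorphism --- by slimness of $G_{k^{(d=1)}}$; under this isomorphism the open normal subgroup $G_k$ is carried onto the subgroup identified with $\Inn(G_k)$ above, so passing to quotients gives the isomorphism $G_{k^{(d=1)}}/G_k \stackrel{\sim}{\longrightarrow} \Inn(G_{k^{(d=1)}})/\Inn(G_k)$. That the target sits inside $\Aut_{G_k}(G_{k^{(d=1)}})/\Inn(G_k) \subset \Out(G_k)$ follows from the inclusion $\Inn(G_{k^{(d=1)}}) \subset \Aut_{G_k}(G_{k^{(d=1)}})$ (valid since $G_k$ is normal in $G_{k^{(d=1)}}$) together with assertion (1), and the injectivity of the resulting map to $\Out(G_k)$ is the content of Lemma \ref{slim profinite group}, (2).

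The substance lies in assertion (3), which I would obtain from Lemma \ref{slim profinite group}, (3): applied to $\Pi = G_{k^{(d=1)}}$, $\Gamma = G_k$ this gives $N_{\Out(G_k)}(G_{k^{(d=1)}}/G_k) = \Aut_{G_k}(G_{k^{(d=1)}})/\Inn(G_k)$, where $G_{k^{(d=1)}}/G_k$ is viewed in $\Out(G_k)$ via the outer action of assertion (2). It therefore remains to identify, as subgroups of $\Out(G_k)$, this copy of $G_{k^{(d=1)}}/G_k$ with the field-theoretic subgroup $\Aut(k)$. On the field side: $k^{(d=1)}$ is the minimal MLF contained in $k$, hence is preserved --- indeed fixed pointwise, being the closure of the prime field --- by every field automorphism of $k$, so $\Aut(k) = \Aut(k/k^{(d=1)}) = \Gal(k/k^{(d=1)})$, which restriction to $k$ identifies with $G_{k^{(d=1)}}/G_k$. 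On the other side: the natural injection $\Aut(k) \hookrightarrow \Out(G_k)$ of \cite{3}, Proposition 2.1, sends $\phi$ to the outer automorphism of $G_k$ represented by conjugation by any extension of $\phi$ to $\overline{k}$; for $\phi \in \Gal(k/k^{(d=1)})$ one may choose this extension in $\Gal(\overline{k}/k^{(d=1)}) = G_{k^{(d=1)}}$, and then the resulting outer automorphism of $G_k$ is precisely the image, under the outer action, of the class of that extension in $G_{k^{(d=1)}}/G_k$. Hence the two subgroups coincide and assertion (3) follows. I expect the main obstacle to be exactly this last identification: one must keep track of the several incarnations of ``$\Inn(G_k)$'' (in $\Aut(G_k)$, in $\Inn(G_{k^{(d=1)}})$, and in $\Aut_{G_k}(G_{k^{(d=1)}})$) and verify the compatibility of $\Aut(k) \hookrightarrow \Out(G_k)$ with the outer action $G_{k^{(d=1)}}/G_k \hookrightarrow \Out(G_k)$ --- all ultimately reducing to the injectivity statements supplied by slimness, but requiring some care to set up cleanly.
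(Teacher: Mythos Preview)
Your proposal is correct and follows essentially the same approach as the paper: apply Lemma \ref{slim profinite group} to the pair $\Pi = G_{k^{(d=1)}}$, $\Gamma = G_k$, invoke the slimness of groups of MLF-type (\cite{3}, Proposition 1.8), and for assertion (3) identify $\Aut(k)$ with $G_{k^{(d=1)}}/G_k$ compatibly with the two embeddings into $\Out(G_k)$. The paper records this last compatibility as a commutative triangle, whereas you unwind it explicitly via extensions of field automorphisms to $\overline{k}$; this is the same argument.
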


\begin{proof}
   Assertion (1) follows directly from Lemma \ref{slim profinite group}, (1), and \cite{3}, Lemma 1.8.
   Assertion (2) follows directly from \cite{3}, Lemma 1.8.
   Assertion (3) follows directly from Lemma \ref{slim profinite group}, (3), \cite{3}, Lemma 1.8, and the commutativity of the following diagram:
   \[
   \begin{tikzcd}
       G_{k^{(d=1)}}/G_{k} \arrow[r, "\subset"] \arrow[d, "\cong"]
&\Out(G_{k})\\
\Aut(k) \arrow[ru, swap, "\subset"],
   \end{tikzcd}
\]
   where the upper horizontal arrow is the injective homomorphism defined in the statement of assertion (2), the diagonal arrow is the natural injective homomorphism, and the left-hand vertical arrow is the natural isomorphism [cf.\ our assumption that $k$ is absolutely Galois].
\end{proof}

\begin{dfn}
     Let $\alpha$ be an element of $\Out(G_{k})$.
     We write $\alpha^{\times} \colon k^{\times} \stackrel{\sim}{\longrightarrow} k^{\times}$ for the automorphism induced from $\alpha$ by the mono-anabelian reconstruction algorithm.
     We shall say that $\alpha$ is $\Q_{p_{k}}^{\times}$-\textit{characteristic} if the equality $\alpha^{\times}((k^{(d=1)})^{\times})= (k^{(d=1)})^{\times}$ holds. 
        We shall say that $\alpha$ is $\Q_{p_{k}}^{\times}$-\textit{preserving} if $\alpha$ is $\Q_{p_{k}}^{\times}$-characteristic, and the equality $\alpha^{\times}|_{(k^{(d=1)})^{\times}}=\id_{(k^{(d=1)})^{\times}}$ holds.
\end{dfn}

\begin{lem}\label{d(G)=1 type}
 If $d_{k}=1$, then every element of $\Out(G_{k})$ is $\Q_{p_{k}}^{\times}$-preserving.
\end{lem}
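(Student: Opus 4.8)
The plan is to reduce the claim to showing that the automorphism $\alpha^{\ab}$ of $G_k^{\ab}$ induced by $\alpha$ is the identity, and then to prove this using the functorial recovery of the valuation and of the $p_k$-adic cyclotomic character, the hypothesis $d_k=1$ entering precisely where one must control the reconstructed unit group.

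First, since $d_k=1$ we have $k^{(d=1)}=k$, so $(k^{(d=1)})^\times=k^\times$; as $\alpha^\times$ is, by construction, an automorphism of $k^\times(G_k)$ (which corresponds to $k^\times$ under \cite{3}, Proposition 3.11, $\rm(i\hspace{-.08em}v\hspace{-.06em})$), the equality $\alpha^\times((k^{(d=1)})^\times)=(k^{(d=1)})^\times$ is automatic, so $\alpha$ is $\Q_{p_k}^\times$-characteristic, and it remains to show $\alpha^\times=\id_{k^\times(G_k)}$. Here I would use that $\alpha^\times$ is the restriction to $k^\times(G_k)$ of $\alpha^{\ab}$, that $\mathcal{O}^\times(G_k)\subset k^\times(G_k)\subset G_k^{\ab}$, and that via $\rec_{G_k}\colon\widehat{k^\times}\stackrel{\sim}{\to}G_k^{\ab}$ the subgroup $k^\times(G_k)$ corresponds to the image of $k^\times=\mathcal{O}_k^\times\times\pi^{\Z}$ and $\mathcal{O}^\times(G_k)$ to $\mathcal{O}_k^\times$. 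Thus $k^\times(G_k)$ is generated by $\mathcal{O}^\times(G_k)$ together with a single element $\phi:=\rec_{G_k}(\pi)$, where $\pi$ is a uniformizer of $k$, and it suffices to show that $\alpha^{\ab}$ fixes $\mathcal{O}^\times(G_k)$ pointwise and fixes $\phi$.

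Next I would argue as follows. Since the reconstructed valuation $k^\times(G_k)\to\Z$ is recovered functorially from $G_k$ [cf.\ \cite{3}], it is $\alpha$-equivariant; hence $\alpha^{\ab}$ acts trivially on $k^\times(G_k)/\mathcal{O}^\times(G_k)\cong\Z$, so that $\alpha^{\ab}(\phi)=\phi\cdot u_0$ for some $u_0\in\mathcal{O}^\times(G_k)$, and $\alpha^{\ab}$ restricts to an automorphism of $\mathcal{O}^\times(G_k)$. To pin down this automorphism together with the unit $u_0$, I would invoke the functorial recovery of the $p_k$-adic cyclotomic character $\chi_{\mathrm{cyc}}\colon G_k^{\ab}\to\Z_{p_k}^\times$ from $G_k$ — i.e.\ cyclotomic rigidity for MLFs, a part of the mono-anabelian package of \cite{3} — which yields $\chi_{\mathrm{cyc}}\circ\alpha^{\ab}=\chi_{\mathrm{cyc}}$. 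It is here that $d_k=1$ is used essentially: because the norm $N_{k/k^{(d=1)}}$ is then the identity, the composite $\chi_{\mathrm{cyc}}\circ\rec_{G_k}$ sends a unit $u\in\mathcal{O}^\times(G_k)=\mathcal{O}_k^\times$ to $u^{\pm1}$ (in particular it is injective on $\mathcal{O}^\times(G_k)$) and sends the uniformizer $\pi=p_k$ to $1$. Combining this with $\chi_{\mathrm{cyc}}\circ\alpha^{\ab}=\chi_{\mathrm{cyc}}$, we get $\alpha^{\ab}|_{\mathcal{O}^\times(G_k)}=\id$ and $1=\chi_{\mathrm{cyc}}(\phi)=\chi_{\mathrm{cyc}}(\alpha^{\ab}(\phi))=\chi_{\mathrm{cyc}}(\phi)\,\chi_{\mathrm{cyc}}(u_0)=\chi_{\mathrm{cyc}}(u_0)$, hence $u_0=1$. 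Therefore $\alpha^{\ab}$ fixes $\mathcal{O}^\times(G_k)$ and $\phi$, so $\alpha^\times=\id_{k^\times(G_k)}$, and $\alpha$ is $\Q_{p_k}^\times$-preserving.

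The step I expect to be the genuine obstacle is the last one, forcing $\alpha^{\ab}$ to act trivially on $\mathcal{O}^\times(G_k)$: for $d_k>1$ this fails (this is exactly what Lemma \ref{O_{k^{(d=1)}}} exploits), so one cannot avoid using that $k$ is the minimal MLF of its residue characteristic — equivalently, that $N_{k/k^{(d=1)}}$ is the identity — in order to upgrade cyclotomic rigidity to full rigidity of the reconstructed multiplicative group. If \cite{3} already records that the homomorphism $\Out(G)\to\Aut(k^\times(G))$ is trivial whenever $d(G)=1$, then the cleanest proof is simply to cite that; the above is the self-contained version.
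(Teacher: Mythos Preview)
Your proof is correct but proceeds differently from the paper's. The paper does exactly what you anticipate in your closing sentence: it cites the full reconstruction of the topological \emph{field} $k(G_k)$ when $d(G_k)=1$ (from \cite{2}, Definition~5.2 and Theorem~5.4), so that by functoriality the homomorphism $\Out(G_k)\to\Aut(k^\times(G_k))$ factors through $\Aut(k(G_k))\cong\Aut(\Q_{p_k})=\{1\}$ and is therefore trivial. You instead unpack part of what underlies that reconstruction: using only cyclotomic rigidity together with the explicit shape of $\chi_{\mathrm{cyc}}\circ\rec_k$ for $k=\Q_{p_k}$ (injective on $\mathcal{O}_k^\times$, trivial on the uniformizer $p_k$), you pin down $\alpha^{\ab}$ directly on generators of $k^\times(G_k)$. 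The paper's route is a one-line appeal to a stronger black box; yours is more self-contained and isolates precisely the ingredient---cyclotomic rigidity combined with $N_{k/k^{(d=1)}}=\id$---that makes the $d_k=1$ case rigid, which also explains transparently why the argument cannot extend to $d_k>1$. One minor point of phrasing: where you say the reconstructed valuation is ``$\alpha$-equivariant'' and deduce triviality on $k^\times(G_k)/\mathcal{O}^\times(G_k)\cong\Z$, what you are really using is that the Frobenius generator of $G_k^{\ab}/\mathcal{O}^\times(G_k)\cong\hat{\Z}$ is itself group-theoretically recovered (via its conjugation action on $I(G_k)/P(G_k)$), so that $\alpha$ fixes it; this is correct, but ``equivariant'' alone would not suffice.
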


\begin{proof}
  Let $k(G_{k})$ be the topological field defined in \cite{2}, Definition 5.2 [Note that if $d_{k} = 1$, then $d(G_{k}) = 1$ --- cf. \cite{3}, Proposition 3.6]. 
By definition, the group $k(G_{k})^{\times}$ is the multiplicative group of the field $k(G_{k})$.
Moreover, it follows immediately from \cite{2}, Theorem 5.4, that $k(G_{k})$ is canonically isomorphic to $k$ as a topological field.
It follows from the functoriality of the mono-anabelian reconstruction algorithm that the following diagram commutes:
\[
   \begin{tikzcd}
       \Out(G_{k}) \arrow[r,] \arrow[d,]
&\Aut(k(G_{k})^{\times})\\
\Aut(k(G_{k})) \arrow[ru, swap, "\subset"].
   \end{tikzcd}
\]
Since $\Aut(k(G_{k})) \cong \Aut(k) = 1$ and the above diagram commutes, the homomorphism $\Out(G_{k}) \to \Aut(k(G_{k})^{\times})$ is trivial.
This completes the proof of Lemma \ref{d(G)=1 type}.
\end{proof}

\begin{thm}\label{Q^{×}-preserving} Let $\alpha$ be an element of $\Out(G_{k})$.
    Suppose that $k$ is an absolutely Galois MLF.
        If $\alpha \in N_{\Out(G_{k})}(\Aut(k))$, then $\alpha$ is $\Q_{p_{k}}^{\times}$-preserving.
\end{thm}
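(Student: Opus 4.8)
The plan is to reduce the statement to Lemma \ref{d(G)=1 type}, applied to the minimal MLF $k^{(d=1)}$, by means of the description of the normalizer $N_{\Out(G_{k})}(\Aut(k))$ furnished by Proposition \ref{ab. gal normalizer} together with the functoriality of the mono-anabelian reconstruction of the multiplicative group with respect to transfer (Verlagerung) maps. (When $d_{k}=1$ the assertion is literally Lemma \ref{d(G)=1 type}, and the argument below is uniform in $d_{k}$, so there is no need to distinguish cases.)

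First I would invoke Proposition \ref{ab. gal normalizer}, (1) and (3): since $\alpha \in N_{\Out(G_{k})}(\Aut(k)) = \Aut_{G_{k}}(G_{k^{(d=1)}})/\Inn(G_{k})$, there exists an automorphism $\widetilde{\alpha}$ of $G_{k^{(d=1)}}$ with $\widetilde{\alpha}(G_{k}) = G_{k}$ whose restriction $\beta \stackrel{\mathrm{def}}{=} \widetilde{\alpha}|_{G_{k}} \in \Aut(G_{k})$ represents $\alpha \in \Out(G_{k})$; in particular, $\alpha^{\times}$ is the automorphism of $k^{\times}$ induced from $\beta$ by the mono-anabelian reconstruction algorithm. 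Next, since $d_{k^{(d=1)}} = 1$ [cf.\ \cite{3}, Proposition 3.6] and the reconstructed multiplicative-group automorphism depends only on the image of $\widetilde{\alpha}$ in $\Out(G_{k^{(d=1)}})$, Lemma \ref{d(G)=1 type} shows that the automorphism $\widetilde{\alpha}^{\times}$ of $(k^{(d=1)})^{\times}$ induced from $\widetilde{\alpha}$ is the identity.

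The key step is to compare $\alpha^{\times}$ with $\widetilde{\alpha}^{\times}$ through the inclusion $(k^{(d=1)})^{\times} \hookrightarrow k^{\times}$. Recall that, under the local reciprocity isomorphisms $\rec_{k^{(d=1)}}$, $\rec_{k}$, this inclusion corresponds to the transfer map $\mathrm{Ver}\colon G_{k^{(d=1)}}^{\ab} \to G_{k}^{\ab}$ associated with the open subgroup $G_{k} \subset G_{k^{(d=1)}}$, and that $\mathrm{Ver}$ carries the group-theoretically reconstructed subgroup $k^{\times}(G_{k^{(d=1)}})$ into $k^{\times}(G_{k})$ compatibly with the canonical identifications $k^{\times}(G_{k^{(d=1)}}) \cong (k^{(d=1)})^{\times}$, $k^{\times}(G_{k}) \cong k^{\times}$ [cf.\ \cite{3}, Proposition 3.11]. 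Since $\widetilde{\alpha}(G_{k}) = G_{k}$, the functoriality of the transfer map yields $\beta^{\ab} \circ \mathrm{Ver} = \mathrm{Ver} \circ \widetilde{\alpha}^{\ab}$, hence, upon passing to the reconstructed multiplicative groups, the square relating $\widetilde{\alpha}^{\times}$ on $(k^{(d=1)})^{\times}$, $\alpha^{\times}$ on $k^{\times}$, and the two copies of the inclusion $(k^{(d=1)})^{\times} \hookrightarrow k^{\times}$ commutes. As $\widetilde{\alpha}^{\times} = \id$, it follows that $\alpha^{\times}$ restricts to the identity on $(k^{(d=1)})^{\times} \subset k^{\times}$; in particular $\alpha^{\times}((k^{(d=1)})^{\times}) = (k^{(d=1)})^{\times}$, so that $\alpha$ is $\Q_{p_{k}}^{\times}$-preserving.

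The main obstacle I anticipate is the careful verification of the compatibility invoked in the last step: one must check that the group-theoretic algorithm reconstructing the subgroup $k^{\times}(G) \subset G^{\ab}$ (and the canonical identification $k^{\times}(G) \cong k^{\times}$) is compatible with transfer maps attached to open subgroups — equivalently, that the reconstructed morphism $k^{\times}(G_{k^{(d=1)}}) \to k^{\times}(G_{k})$ induced by $\mathrm{Ver}$ agrees with the field-theoretic inclusion $(k^{(d=1)})^{\times} \hookrightarrow k^{\times}$ — and that this compatibility is natural with respect to automorphisms preserving the open subgroup $G_{k}$. This is a routine but slightly delicate bookkeeping exercise combining the functoriality statements of \cite{3}, Proposition 3.11, with the functoriality of local class field theory; everything else in the argument is formal.
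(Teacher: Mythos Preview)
Your argument is correct and follows essentially the same route as the paper: lift $\alpha$ to an automorphism of $G_{k^{(d=1)}}$ via Proposition \ref{ab. gal normalizer}, (3), apply Lemma \ref{d(G)=1 type} to trivialize its action on $(k^{(d=1)})^{\times}$, and descend via the transfer map. The compatibility you flag as the ``main obstacle'' is exactly what the paper invokes by citing the commutative diagram of \cite{3}, Lemma 1.7, (iii), together with the functoriality of the transfer map, so there is no gap.
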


\begin{proof}
   This assertion follows immediately from Proposition \ref{ab. gal normalizer}, (3), Lemma \ref{d(G)=1 type}, the commutative diagram appearing in the statement of \cite{3}, Lemma 1.7, $\rm(i\hspace{-.08em}i\hspace{-.08em}i)$, and the functoriality of the transfer map.
\end{proof}

The second main theorem of the present paper --- namely, the non-normality of the field-theoretic subgroup of $\Out(G_k)$ --- follows from some results in \S1 and the techniques developed in \cite{1} under the assumptions that $d_k$ is even, $p_k$ is odd, and $k$ is absolutely Galois.

\begin{thm}\label{non-normality when d_{k} is even}
    Suppose that $k$ is an absolutely Galois MLF, that $p_{k}$ is odd, and that $d_{k}$ is even.
    Then the set of $\Out(G_{k})$-conjugates of the subgroup $\Aut(k) \subset \Out(G_{k})$ is infinite.
    In particular, the subgroup $\Aut(k) \subset \Out(G_{k})$ is not a normal subgroup of $\Out(G_{k})$.
\end{thm}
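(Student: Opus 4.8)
The plan is to exhibit a single element of $\Out(G_{k})$ whose powers already account for infinitely many $\Out(G_{k})$-conjugates of $\Aut(k)$. Let $\varphi\in\Aut(G_{k})$ be the automorphism appearing in the statement of Lemma~\ref{O_{k^{(d=1)}}} [defined on the generators of Theorem~\ref{generator and relation of group of MLF-type} by $\varphi(x_{2})=x_{2}x_{1}$, $\varphi(x_{i})=x_{i}$ for $i\neq 2$, $\varphi(\sigma)=\sigma$, $\varphi(\tau)=\tau$], and, by abuse of notation, write again $\varphi$ for its class in $\Out(G_{k})$; recall that $(\varphi^{n})_{+}$ and $(\varphi^{n})^{\times}$ depend only on this class. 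First I would prove that $\varphi^{n}\notin N_{\Out(G_{k})}(\Aut(k))$ for every $n\in\Z\setminus\{0\}$, and then deduce that the conjugates $\varphi^{n}\,\Aut(k)\,\varphi^{-n}$, $n\in\Z$, are pairwise distinct, which is more than enough for the assertion, since a normal subgroup has a single conjugate.

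For the first, and main, step I would argue by contradiction. Suppose $\varphi^{n}\in N_{\Out(G_{k})}(\Aut(k))$ for some $n\neq 0$; since $k$ is absolutely Galois, Theorem~\ref{Q^{×}-preserving} gives that $\varphi^{n}$ is $\Q_{p_{k}}^{\times}$-preserving, in particular $\Q_{p_{k}}^{\times}$-characteristic, i.e.\ $(\varphi^{n})^{\times}\bigl((k^{(d=1)})^{\times}\bigr)=(k^{(d=1)})^{\times}$. Because the subgroup $\mathcal{O}_{k}^{\times}\subset k^{\times}$ is reconstructed group-theoretically from $G_{k}$, the automorphism $(\varphi^{n})^{\times}$ preserves it, and since $\mathcal{O}_{k^{(d=1)}}^{\times}=(k^{(d=1)})^{\times}\cap\mathcal{O}_{k}^{\times}$ this forces $(\varphi^{n})^{\times}\bigl(\mathcal{O}_{k^{(d=1)}}^{\times}\bigr)=\mathcal{O}_{k^{(d=1)}}^{\times}$. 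Now $(\varphi^{n})^{\times}$ and $(\varphi^{n})_{+}$ are compatible with the [group-theoretically reconstructed] $p_{k}$-adic logarithm $\log_{k}\colon\mathcal{O}_{k}^{\times}\to k_{+}$ [cf.\ \cite{3}, Proposition 3.11; \cite{1}, Lemma 1.2], and $\log_{k}(\mathcal{O}_{k^{(d=1)}}^{\times})$ is an open subgroup of $k^{(d=1)}_{+}$, hence generates the $\Q_{p_{k}}$-line $k^{(d=1)}_{+}$. Therefore $(\varphi^{n})_{+}\bigl(k^{(d=1)}_{+}\bigr)=\langle(\varphi^{n})_{+}(\log_{k}(\mathcal{O}_{k^{(d=1)}}^{\times}))\rangle_{\Q_{p_{k}}}=\langle\log_{k}(\mathcal{O}_{k^{(d=1)}}^{\times})\rangle_{\Q_{p_{k}}}=k^{(d=1)}_{+}$, contradicting Lemma~\ref{O_{k^{(d=1)}}}, (1). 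Hence $\varphi^{n}\notin N_{\Out(G_{k})}(\Aut(k))$ for all $n\neq 0$.

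For the deduction I would note that $\{\,n\in\Z:\varphi^{n}\in N_{\Out(G_{k})}(\Aut(k))\,\}$ is the preimage of a subgroup under the homomorphism $\Z\to\Out(G_{k})$, $n\mapsto\varphi^{n}$, hence a subgroup of $\Z$, and by the previous step it contains no nonzero integer, so it equals $\{0\}$. Consequently $n\mapsto\varphi^{n}\,\Aut(k)\,\varphi^{-n}$ is injective: if $\varphi^{n}\,\Aut(k)\,\varphi^{-n}=\varphi^{m}\,\Aut(k)\,\varphi^{-m}$, then $\varphi^{\,m-n}$ normalizes $\Aut(k)$, so $m=n$. Thus $\Aut(k)$ has infinitely many $\Out(G_{k})$-conjugates, and in particular is not a normal subgroup of $\Out(G_{k})$. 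I expect the main obstacle to be the ``bridge'' in the second paragraph — upgrading the purely multiplicative conclusion of Theorem~\ref{Q^{×}-preserving} to a statement about the additive module $k^{(d=1)}_{+}$, where the explicit non-preservation computation of Lemma~\ref{O_{k^{(d=1)}}} (itself built on the group-theoretic description of $\Ker(\Tr_{k/k^{(d=1)}})$ from Theorem~\ref{determination of Ker(Tr)}) is available. This is precisely where \S1 of the present paper feeds into the argument of \cite{1}, and it rests essentially on the group-theoreticity of $\mathcal{O}_{k}^{\times}$ and of $\log_{k}$ in the mono-anabelian reconstruction.
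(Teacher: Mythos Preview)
Your proof is correct and follows essentially the same route as the paper: exhibit the automorphism $\varphi$ of Lemma~\ref{O_{k^{(d=1)}}}, combine Theorem~\ref{Q^{×}-preserving} with that lemma to conclude $\varphi^{n}\notin N_{\Out(G_{k})}(\Aut(k))$ for every nonzero $n$, and deduce that the conjugates $\varphi^{n}\Aut(k)\varphi^{-n}$ are pairwise distinct. The only cosmetic difference is that the paper cites part~(2) of Lemma~\ref{O_{k^{(d=1)}}} (the multiplicative statement that $\varphi^{\times}(\mathcal{O}_{k^{(d=1)}}^{\times})\cap\mathcal{O}_{k^{(d=1)}}^{\times}$ is not open), whereas you pass to the additive side and invoke part~(1) directly; since (2) is itself deduced from (1) in the lemma's proof, and since the paper's later Remark in fact sketches exactly your ``bridge'' from $\Q_{p_{k}}^{\times}$-preserving to $(\Q_{p_{k}})_{+}$-preserving, the two arguments are interchangeable.
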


\begin{proof}
    Let $\varphi$ be the automorphism of $G_{k}$ defined in the statement of Lemma \ref{O_{k^{(d=1)}}}.
    Then it follows from Theorem \ref{Q^{×}-preserving}, and Lemma \ref{O_{k^{(d=1)}}}, (2), that $\varphi^n \notin N_{\Out(G_k)}(\Aut(k))$ for all integer $n \neq 0$.
    In particular, the set of $\Out(G_{k})$-conjugates of the subgroup $\Aut(k) \subset \Out(G_{k})$ is infinite.
\end{proof}

\begin{rem}\label{various non-normality results}
    Theorem \ref{non-normality when d_{k} is even} is a generalization of the main theorem of \cite{1} and one of the main results of \cite{2}, which appeared in Remark~\ref{group-theoretic but not ring theoretic isom}.  
The method of the proof of Theorem \ref{non-normality when d_{k} is even} is essentially the same as the method of the proof that was applied in \cite{1} and is substantially different from the approach taken in \cite{2}.
\end{rem}

\begin{rem}\label{ring theoretic deta}
Suppose that the subgroup $G_k \subset G_{k^{(d=1)}}$ is a characteristic open subgroup.
For example, if $k/k^{(d=1)}$ is an abelian extension, then the subgroup $G_k \subset G_{k^{(d=1)}}$ is characteristic and open [cf.\ \cite{2}, Theorem 6.3, $\rm(\hspace{.18em}i\hspace{.18em})$].
Then, by Proposition \ref{ab. gal normalizer}, (2), (3), we obtain the following exact sequence of groups:
\begin{align*}
    1 \to \Aut(k) \to N_{\Out(G_k)}(\Aut(k)) \to \Out(G_{k^{(d=1)}}) \to 1.
\end{align*}
From the existence of this exact sequence and the fact that $\Out(G_{k^{(d=1)}}) \neq 1$ for odd residue characteristic $p_k$ [cf., e.g., the discussion given at the final portion of \cite{6}, Chapter V\hspace{-1.2pt}I\hspace{-1.2pt}I, \S5], we deduce that $\Aut(k) \subsetneq N_{\Out(G_k)}(\Aut(k))$.
Put another way, there exists an element $\alpha \in N_{\Out(G_k)}(\Aut(k))$ such that $\alpha \notin \Aut(k)$.
As discussed in the discussion following Remark \ref{group-theoretic but not ring theoretic isom}, in general, the subgroup $(k^{(d=1)})^{\times} \subset k^{\times}$ cannot be reconstructed from the group $G_k$ in a purely group-theoretic and functorial manner.  
Put another way, the subgroup $(k^{(d=1)})^{\times} \subset k^{\times}$ depends on the ring structure of $k$ in an essential way.
On the other hand, by Theorem \ref{Q^{×}-preserving}, if $k$ is an absolutely Galois MLF and $\alpha \in N_{\Out(G_k)}(\Aut(k))$, then the induced isomorphism $\alpha^{\times}$ preserves the subgroup $(k^{(d=1)})^{\times} \subset k^{\times}$ and $\alpha^{\times}|_{(k^{(d=1)})^{\times}}=\id_{(k^{(d=1)})^{\times}}$.
It follows from this discussion that every element $\alpha \in N_{\Out(G_k)}(\Aut(k))$, which is not necessarily contained in $\Aut(k)$, preserves certain structures of ring-theoretic nature.
\end{rem}

\begin{rem}
    Let $\alpha$ be an element of $N_{\Out(G_k)}(\Aut(k))$. 
Suppose that $k$ is an absolutely Galois MLF.
As is shown in \cite{1}, Lemma 3.3, it holds that $\alpha_{+}|_{k^{(d=1)}_{+}} = \id_{k^{(d=1)}_{+}}$.  

In what follows, we provide an alternative proof of this fact, which is stated in \cite{1}, by means of Theorem \ref{Q^{×}-preserving}.
What we need to show is that, for $\alpha \in \Out(G_{k})$, if $\alpha$ is $\Q_{p_{k}}^{\times}$-preserving, then it is also $(\Q_{p_{k}})_{+}$-preserving.
Since $\alpha$ is $\Q_{p_{k}}^{\times}$-preserving, we have $\alpha^{\times}|_{(k^{(d=1)})^{\times}}=\id_{(k^{(d=1)})^{\times}}$.
   In particular, we have $\alpha^{\times}|_{\mathcal{O}^{\times}_{k^{(d=1)}}}=\id_{\mathcal{O}^{\times}_{k^{(d=1)}}}$.
   Therefore, on $2p_{k}\mathcal{I}_{k}$, we have $\alpha_{+} = \id_{2p_{k}\mathcal{I}_{k}}$.
   Since, for an arbitrary $x \in k_{+}$, 
     there exists an integer $n$ such that $p_{k}^{n}x \in 2p_{k}\mathcal{I}_{k}$ [cf.\ \cite{3}, Lemma 1.2, $\rm(\hspace{-.06em}v\hspace{-.08em}i)$], we conclude that $\alpha_{+}=\id_{k_{+}}$.
\end{rem}
We now prepare to prove the non-normality of the field-theoretic subgroup of the outer automorphism group of the absolute Galois group of $k$ in the case where $d_k$ is odd, $k$ is absolutely Galois, and $p_{k}$ is odd.

In the remainder of the present paper, we assume that the prime numbers $p_{k}$ and $p(G)$ are odd.

Since $G$ is a topologically finitely generated profinite group [cf.\ Theorem \ref{generator and relation of group of MLF-type}], for any integer $n > 0$, there exist only finitely many subgroups of $G$ of index $n$.
In particular, the set of all characteristic subgroups of $G$ of finite index forms a fundamental system of open neighborhoods of $1 \in G$.
Hence, it follows from \cite{19}, Proposition 4.4.3, that $\Aut(G)$ and $\Out(G)$ have natural profinite group structures.
In the remainder of the present paper, we endow $\Aut(G)$ and $\Out(G)$ with these profinite group structures.

Let us consider the group homomorphism $\Out(G) \to \Aut_{\Q_{p(G)}}(k_{+}(G))$ defined via the mono-anabelian reconstruction algorithm [cf.\ \cite{3}, Proposition 3.11, $\rm(i\hspace{-.08em}v\hspace{-.06em})$, \cite{1}, Lemma 1.2].
We equip $ \Aut_{\Q_{p(G)}}(k_{+}(G))$ with the topology induced by the $p(G)$-adic topology of $k_{+}(G)$, i.e., the relative topology with respect to
\begin{align*}
    \Aut_{\Q_{p(G)}}(k_{+}(G)) \subset \End_{\Q_{p(G)}}(k_{+}(G)) \cong \Q_{p(G)}^{d(G)^{2}},
\end{align*}
or, equivalently, the compact open topology defined by the $p(G)$-adic topology of $k_{+}(G)$.
\begin{lem}\label{closed map}
    The action $\mathrm{Aut}(G) \curvearrowright k_{+}(G)$ which is defined via the mono-anabelian reconstruction algorithm is continuous.  
    In particular, the induced maps $\Aut(G) \to \Aut_{\Q_{p(G)}}(k_{+}(G))$ and $\Out(G) \to \Aut_{\Q_{p(G)}}(k_{+}(G))$ are continuous and closed.
\end{lem}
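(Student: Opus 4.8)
The plan is to establish the continuity of the action $\Aut(G) \curvearrowright k_{+}(G)$ by propagating it, step by step, through the mono-anabelian reconstruction algorithm starting from the tautological action of $\Aut(G)$ on $G$; then to deduce the continuity of the induced homomorphisms $\Aut(G) \to \Aut_{\Q_{p(G)}}(k_{+}(G))$ and $\Out(G) \to \Aut_{\Q_{p(G)}}(k_{+}(G))$ by a soft argument; and finally to observe that closedness is automatic by a compactness/Hausdorffness argument.

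First I would recall that, since $G$ is topologically finitely generated, the evaluation map $\Aut(G) \times G \to G$ is continuous: the subgroups $\ker(\Aut(G) \to \Aut(G/N))$, for $N \subset G$ open characteristic, form a neighborhood basis of $1$ in $\Aut(G)$, and if $\varphi$ lies in the coset of $\varphi_{0}$ modulo such a subgroup and $g \in g_{0}N$, then $\varphi(g) \in \varphi_{0}(g_{0})N$ [cf.\ \cite{19}, \S4.4]. Passing to the quotient gives the continuity of $\Aut(G) \times G^{\ab} \to G^{\ab}$, and since the reconstruction of $\mathcal{O}^{\times}(G) \subset G^{\ab}$ is functorial, this subgroup is $\Aut(G)$-invariant and closed, whence $\Aut(G) \times \mathcal{O}^{\times}(G) \to \mathcal{O}^{\times}(G)$ is continuous. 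Now the reconstruction provides an $\Aut(G)$-equivariant continuous surjection $q \colon \mathcal{O}^{\times}(G) \to L$ onto a compact open $\Aut(G)$-invariant subgroup $L$ of $k_{+}(G)$, and one has $k_{+}(G) = \bigcup_{n \ge 0} p(G)^{-n}L$ as a filtered union of compact open $\Aut(G)$-invariant subgroups [cf.\ \cite{3}, Lemma 1.2, Proposition 3.11, (iv); \cite{1}, Lemma 1.2; and the definition of perfection in \S0]. Hence $\Aut(G) \times k_{+}(G)$ is the union of the open subsets $\Aut(G) \times p(G)^{-n}L$, so it suffices to check continuity of the action on each of these. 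Multiplication by $p(G)^{n}$ is an $\Aut(G)$-equivariant homeomorphism $p(G)^{-n}L \xrightarrow{\sim} L$, so one is reduced to $\Aut(G) \times L \to L$; and since $q$ is a continuous surjection from a compact space onto the Hausdorff space $L$, the map $\id_{\Aut(G)} \times q$ is closed, hence a quotient map, so the continuity of $\Aut(G) \times L \to L$ follows from that of $\Aut(G) \times \mathcal{O}^{\times}(G) \to \mathcal{O}^{\times}(G)$.

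Next I would pass from the action to the homomorphism $\Aut(G) \to \Aut_{\Q_{p(G)}}(k_{+}(G))$: choosing a $\Q_{p(G)}$-basis $e_{1}, \ldots, e_{d(G)}$ of $k_{+}(G)$, each coordinate map $\Aut(G) \to k_{+}(G)$, $\varphi \mapsto \varphi \cdot e_{i}$, is the composite of $\varphi \mapsto (\varphi, e_{i})$ with the (now continuous) action, hence continuous; therefore $\Aut(G) \to \End_{\Q_{p(G)}}(k_{+}(G)) \cong \Q_{p(G)}^{d(G)^{2}}$ is continuous, and so is $\Aut(G) \to \Aut_{\Q_{p(G)}}(k_{+}(G))$ with its subspace topology. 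Since inner automorphisms induce the identity on $k_{+}(G)$ [cf.\ \S0, Groups of MLF-type], this homomorphism factors through the quotient $\Out(G) = \Aut(G)/\Inn(G)$, so $\Out(G) \to \Aut_{\Q_{p(G)}}(k_{+}(G))$ is continuous for the quotient topology on $\Out(G)$. Finally, both $\Aut(G)$ and $\Out(G)$ are profinite, hence compact, whereas $\Aut_{\Q_{p(G)}}(k_{+}(G)) \subset \Q_{p(G)}^{d(G)^{2}}$ is Hausdorff, and a continuous map from a compact space to a Hausdorff space is closed; this gives the asserted closedness of both homomorphisms.

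The only point requiring care is the claim, used in the second paragraph, that the reconstruction exhibits $k_{+}(G)$ as the filtered union of the $\Aut(G)$-invariant compact open subgroups $p(G)^{-n}L$ — that is, that the passage from $\mathcal{O}^{\times}(G)$ to $k_{+}(G)$ via the $p(G)$-adic logarithm and perfection behaves as expected with respect to the $\Aut(G)$-action. This is a matter of unwinding the definitions in \cite{3} and \cite{1}; the remainder of the argument is formal point-set topology, and I do not anticipate a genuine obstacle.
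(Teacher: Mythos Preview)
Your proof is correct and follows essentially the same route as the paper's: reduce the continuity of the action on $k_{+}(G)$ to that on $\mathcal{O}^{\times}(G)$ via the perfection/logarithm construction, then to $G^{\ab}$, and conclude closedness from the compact-to-Hausdorff principle. The only cosmetic difference is that the paper phrases each step as continuity of the induced homomorphism $\Aut(G) \to \Aut(-)$ with respect to compact-open topologies (invoking \cite{19}, Theorem 4.4.2, for the coincidence of the profinite and compact-open topologies on $\Aut(G^{\ab})$, and ``functoriality of perfection'' for the passage to $k_{+}(G)$), whereas you work directly with the action maps $\Aut(G) \times (-) \to (-)$ and handle the perfection step by your explicit union-of-$p(G)^{-n}L$ argument.
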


\begin{proof}
    Since $k_{+}(G)$ is a locally compact Hausdorff space, the topological spaces $\Aut(G)$ and $\Out(G)$ are compact, and $\Aut_{\Q_{p(G)}}(k_{+}(G))$ is Hausdorff, to verify Lemma \ref{closed map}, it suffices to show that the action $\mathrm{Aut}(G) \curvearrowright k_{+}(G)$ defined via the mono-anabelian reconstruction algorithm is continuous.
    Moreover, since the map $\Aut(\mathcal{O}^{\times}(G)) \to \Aut(k_{+}(G))$ induced by the functoriality of perfection [cf.\ \S 0, Notational conventions, Monoids] is continuous with respect to the compact-open topology on $\Aut(\mathcal{O}^{\times}(G))$, to verify Lemma \ref{closed map}, it suffices to show that the action $\Aut(G) \curvearrowright \mathcal{O}^{\times}(G)$ defined via the mono-anabelian reconstruction algorithm is continuous.
    
    By construction, the map $\Aut(G) \to \Aut(\mathcal{O}^{\times}(G))$ factors through the composite $\Aut(G) \to \Aut_{\mathcal{O}^{\times}(G)}(G^{\mathrm{ab}}) \to \Aut(\mathcal{O}^{\times}(G))$, where we write $\Aut_{\mathcal{O}^{\times}(G)}(G^{\mathrm{ab}})$ for the subgroup of $\Aut(G^{\mathrm{ab}})$ consisting of elements that preserve the subgroup $\mathcal{O}^{\times}(G) \subset G^{\mathrm{ab}}$.
    We equip $\Aut(G^{\mathrm{ab}})$ with the profinite topology as in the case of $\Aut(G)$. 
    The subgroup $\Aut_{\mathcal{O}^{\times}(G)}(G^{\mathrm{ab}})$ is endowed with the subspace topology induced from $\Aut(G^{\mathrm{ab}})$.
    It is clear that the map $\Aut(G) \to \Aut_{\mathcal{O}^{\times}(G)}(G^{\mathrm{ab}})$ is continuous.
   Moreover, since the profinite topology and the compact-open topology on $\mathrm{Aut}(G^{\mathrm{ab}})$ coincide [cf., e.g., \cite{19}, Theorem 4.4.2], the map $\Aut_{\mathcal{O}^{\times}(G)}(G^{\mathrm{ab}}) \to \Aut(\mathcal{O}^{\times}(G))$, which is defined by $\varphi \mapsto \varphi|_{\mathcal{O}^{\times}(G)}$, is continuous.
    Thus, the action $\Aut(G) \curvearrowright \mathcal{O}^{\times}(G)$ is continuous.
    This completes the proof of Lemma \ref{closed map}.
\end{proof}

When $d_{k}>1$, let us consider the basis $y_{1},\ldots,y_{d_{k}}$ of $k_{+}$ over $\Q_{p_{k}}$ corresponding to the basis of $k_{+}(G_{k})$ that appeared in the discussion preceding Lemma \ref{preservation Ker(Tr)} via the isomorphism of topological groups $k_{+}(G_{k}) \stackrel{\sim}{\longrightarrow} k_{+}$ of \cite{3}, Proposition 3.11, $\rm(i\hspace{-.08em}v\hspace{-.06em})$.
In the remainder of the present paper, we always equip $k_{+}$ with this basis, which allows us to identify $\Aut_{\Q_{p_{k}}}(k_{+})$ with $\GL_{d_{k}}(\Q_{p_{k}})$.
We equip $\GL_{d_{k}}(\Q_{p_{k}})$ with its natural $p_{k}$-adic Lie group structure.

\begin{lem}\label{the centralizer of Aut(k) in GL}
    Suppose that $k$ is an absolutely Galois MLF and that $d_{k}>1$.
    Then the following assertions hold:
    \begin{enumerate}
        \item [(1)] The centralizer of $H\stackrel{\mathrm{def}}{=}\Aut(k) \subset \Aut_{\Q_{p_{k}}}(k_{+})=\GL_{d_{k}}(\Q_{p_{k}})$ in $\GL_{d_{k}}(\Q_{p_{k}})$ is a closed subgroup of $\GL_{d_{k}}(\Q_{p_{k}})$.
    In particular, we regard the centralizer of $H$ in $\GL_{d_{k}}(\Q_{p_{k}})$ as a $p_{k}$-adic Lie subgroup of $\GL_{d_{k}}(\Q_{p_{k}})$ [cf.\ \cite{17}, Theorem 9.6].
    \item [(2)] When $(\Q_{p_{k}}[H])^{\times}$ is equipped with its natural $p_{k}$-adic Lie group structure, there exists an isomorphism of $p_{k}$-adic Lie groups
    \begin{align*}
        Z_{\GL_{d_{k}}(\Q_{p_{k}})}(H)\stackrel{\sim}{\longrightarrow} (\Q_{p_{k}}[H])^{\times}.
    \end{align*} 
    \item [(3)] The dimension of the $p_{k}$-adic Lie group $Z_{\GL_{d_{k}}(\Q_{p_{k}})}(H)$ is at most $d_{k}$.
    \end{enumerate}
\end{lem}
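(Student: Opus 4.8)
\emph{The plan} is to identify $Z_{\GL_{d_{k}}(\Q_{p_{k}})}(H)$ with the unit group of the commutant of the $H$-action on $k_{+}$, to recognise that commutant as the group algebra $\Q_{p_{k}}[H]$ by means of the normal basis theorem, and then to read off the Lie-theoretic conclusions. First I would record the basic facts: since $k$ is absolutely Galois and $k^{(d=1)}=\Q_{p_{k}}$ has no nontrivial field automorphisms, one has $H=\Aut(k)=\Gal(k/k^{(d=1)})$, a group of order $d_{k}$; moreover the composite $\Aut(k)\hookrightarrow\Out(G_{k})\to\Aut_{\Q_{p_{k}}}(k_{+})$ is the tautological action of field automorphisms of $k$ on $k_{+}$ (a compatibility intrinsic to the mono-anabelian reconstruction algorithm), so that $H$ is literally its image in $\GL_{d_{k}}(\Q_{p_{k}})$ and $k_{+}$, so equipped, is a faithful $\Q_{p_{k}}[H]$-module.

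For assertion (1): for each fixed $h\in H$ the equation $gh=hg$ is a system of $\Q_{p_{k}}$-linear conditions on $g\in M_{d_{k}}(\Q_{p_{k}})=\End_{\Q_{p_{k}}}(k_{+})$, so $Z_{\GL_{d_{k}}(\Q_{p_{k}})}(H)=\bigcap_{h\in H}\{g\in\GL_{d_{k}}(\Q_{p_{k}}):gh=hg\}$ is closed in $\GL_{d_{k}}(\Q_{p_{k}})$; the closed-subgroup theorem [\cite{17}, Theorem 9.6] then exhibits it as a $p_{k}$-adic Lie subgroup. For the algebraic heart of assertion (2): put $\Lambda\stackrel{\mathrm{def}}{=}\Q_{p_{k}}[H]$ and let $\Lambda\to\End_{\Q_{p_{k}}}(k_{+})$ be the $\Q_{p_{k}}$-algebra map extending the $H$-action. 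By the normal basis theorem for the finite Galois extension $k/k^{(d=1)}$, the $\Lambda$-module $k_{+}$ is free of rank one, i.e.\ $k_{+}\cong\Lambda$. Since an endomorphism of $k_{+}$ centralises $H$ precisely when it is $\Lambda$-linear, we obtain $Z_{\End_{\Q_{p_{k}}}(k_{+})}(H)=\End_{\Lambda}(k_{+})\cong\End_{\Lambda}(\Lambda)\cong\Lambda^{\mathrm{op}}$ as $\Q_{p_{k}}$-algebras; composing with the $\Q_{p_{k}}$-algebra isomorphism $\Lambda^{\mathrm{op}}\stackrel{\sim}{\longrightarrow}\Lambda$ given by $\sum_{h}a_{h}h\mapsto\sum_{h}a_{h}h^{-1}$ yields a $\Q_{p_{k}}$-algebra isomorphism $Z_{\End_{\Q_{p_{k}}}(k_{+})}(H)\stackrel{\sim}{\longrightarrow}\Q_{p_{k}}[H]$.

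To finish (2) I would pass to unit groups, using that the unit group of a finite-dimensional associative $\Q_{p_{k}}$-algebra $A$ is the Zariski-open locus $\{a\in A:\det(L_{a})\neq 0\}$ inside the affine space $A$ (here $L_{a}$ denotes left multiplication by $a$), hence carries a canonical $p_{k}$-adic Lie group structure whose formation is functorial for $\Q_{p_{k}}$-algebra isomorphisms; this delivers the isomorphism of $p_{k}$-adic Lie groups $Z_{\GL_{d_{k}}(\Q_{p_{k}})}(H)\stackrel{\sim}{\longrightarrow}(\Q_{p_{k}}[H])^{\times}$. Assertion (3) is then immediate, since $(\Q_{p_{k}}[H])^{\times}$ is a nonempty open subset of the $\Q_{p_{k}}$-vector space $\Q_{p_{k}}[H]$ of dimension $|H|=d_{k}$, so that $Z_{\GL_{d_{k}}(\Q_{p_{k}})}(H)$ has dimension $d_{k}$ --- in particular at most $d_{k}$.

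The step I expect to require the most care is the compatibility of the isomorphism of (2) with the $p_{k}$-adic Lie group structures, i.e.\ the claim that the topology on $Z_{\GL_{d_{k}}(\Q_{p_{k}})}(H)$ inherited from $\GL_{d_{k}}(\Q_{p_{k}})$ agrees with the one transported from $(\Q_{p_{k}}[H])^{\times}$. This should follow from the observation that $Z_{\End_{\Q_{p_{k}}}(k_{+})}(H)$ is a $\Q_{p_{k}}$-linear subspace of $\End_{\Q_{p_{k}}}(k_{+})\cong\Q_{p_{k}}^{d_{k}^{2}}$, so its subspace topology is exactly its intrinsic $p_{k}$-adic topology as a finite-dimensional $\Q_{p_{k}}$-algebra, which is preserved by the algebra isomorphisms above; modulo this and the classical normal basis theorem, every remaining step is elementary linear algebra.
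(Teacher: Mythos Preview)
Your proof is correct and follows essentially the same route as the paper's: both invoke the normal basis theorem to identify $k_{+}$ with $\Q_{p_{k}}[H]$ as a left $\Q_{p_{k}}[H]$-module, compute the commutant as $\End_{\Q_{p_{k}}[H]}(\Q_{p_{k}}[H])\cong(\Q_{p_{k}}[H])^{\mathrm{op}}\cong\Q_{p_{k}}[H]$, and pass to unit groups. Your write-up is somewhat more explicit than the paper's---you spell out why the embedding $\Aut(k)\hookrightarrow\Aut_{\Q_{p_{k}}}(k_{+})$ is the tautological Galois action, you justify closedness via linear conditions rather than just ``$H$ is finite'', and you address the topological compatibility in (2) carefully---and in (3) you in fact obtain the sharper conclusion $\dim=d_{k}$ rather than merely $\leq d_{k}$, but none of this constitutes a genuinely different argument.
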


\begin{proof}
    Since $H=\Aut(k)$ is a finite group, assertion (1) holds.
    Next, we verify assertion (2).
    Since $k$ is absolutely Galois and $H=\Aut(k)=\Gal(k/k^{(d=1)})$, it follows from the normal basis theorem that $k_{+} \cong \Q_{p_{k}}[H]$ as a left $\Q_{p_{k}}[H]$-module [where $k_{+}$ is endowed with its natural left $\Q_{p_{k}}[H]$-module structure].
    By taking the automorphism groups of both sides of the $\Q_{p_{k}}$-isomorphism $k_{+} \cong \Q_{p_{k}}[H]$, we obtain a topological group isomorphism
    \begin{align*}
        Z_{\GL_{d_{k}}(\Q_{p_{k}})}(H) \stackrel{\sim}{\longrightarrow} ((\Q_{p_{k}}[H])^{\times})^{\mathrm{op}} \stackrel{\sim}{\longrightarrow} (\Q_{p_{k}}[H])^{\times},
    \end{align*}
    where we note that the centralizer $Z_{\GL_{d_{k}}(\Q_{p_{k}})}(H)$ is naturally isomorphic to the group of automorphisms of $k_{+}$ as a left $\Q_{p_{k}}[H]$-module.
    By construction, this isomorphism is compatible with the natural $p_{k}$-adic Lie group structure and thus gives an isomorphism of $p_{k}$-adic Lie groups.
    This completes the proof of assertion (2).
    Finally, since the dimension of $(\Q_{p_{k}}[H])^{\times}$ as a $p_{k}$-adic Lie group is at most  $d_{k}$, it follows from assertion (2) that assertion (3) holds.
    This completes the proof of Lemma \ref{the centralizer of Aut(k) in GL}.
\end{proof}

\begin{lem}\label{Sp is not virtually abelian}
    Let $n$ be a positive integer.
    Then the group $\mathrm{Sp}_{2n}(\Z_{p_{k}})$ is not a virtually abelian profinite group, i.e., has no abelian open subgroup.
\end{lem}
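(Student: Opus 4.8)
The plan is to show directly that every open subgroup of $\mathrm{Sp}_{2n}(\Z_{p_{k}})$ contains a pair of non-commuting elements. First I would reduce to the principal congruence subgroups. For $m \geq 1$ set $\Gamma(m) \stackrel{\mathrm{def}}{=} \Ker\big(\mathrm{Sp}_{2n}(\Z_{p_{k}}) \to \mathrm{Sp}_{2n}(\Z_{p_{k}}/p_{k}^{m}\Z_{p_{k}})\big)$. Since $\mathrm{Sp}_{2n}(\Z_{p_{k}})$ carries the topology induced from the $p_{k}$-adic topology on $\mathrm{M}_{2n}(\Z_{p_{k}}) \cong \Z_{p_{k}}^{4n^{2}}$, the subgroups $\Gamma(m)$ are open of finite index and form a fundamental system of open neighborhoods of the identity; hence any open subgroup $U \subset \mathrm{Sp}_{2n}(\Z_{p_{k}})$ contains $\Gamma(m)$ for some $m$. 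As every subgroup of an abelian group is abelian, it suffices to prove that each $\Gamma(m)$ is non-abelian.

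Next I would exhibit explicit non-commuting elements of $\Gamma(m)$. Embed $\mathrm{SL}_{2} = \mathrm{Sp}_{2}$ into $\mathrm{Sp}_{2n}$ as the subgroup acting on the symplectic plane spanned by the first pair of standard basis vectors and fixing its orthogonal complement, and consider the images $u$, $v$ of the transvections $\left(\begin{smallmatrix} 1 & p_{k}^{m} \\ 0 & 1 \end{smallmatrix}\right)$ and $\left(\begin{smallmatrix} 1 & 0 \\ p_{k}^{m} & 1 \end{smallmatrix}\right)$. Both lie in $\mathrm{Sp}_{2n}(\Z_{p_{k}})$ and reduce to the identity modulo $p_{k}^{m}$, so $u, v \in \Gamma(m)$; a direct $2 \times 2$ computation gives $uv$ and $vu$ differing in two entries by $\pm p_{k}^{2m} \neq 0$, so $uv \neq vu$. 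Hence $\Gamma(m)$ is non-abelian for every $m \geq 1$, and therefore so is every open subgroup $U$ of $\mathrm{Sp}_{2n}(\Z_{p_{k}})$. This proves that $\mathrm{Sp}_{2n}(\Z_{p_{k}})$ has no abelian open subgroup.

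There is essentially no serious obstacle here; the only points that require care are the verification that the $\Gamma(m)$ form a neighborhood basis of $1$ (immediate from the description of the topology on $\mathrm{Sp}_{2n}(\Z_{p_{k}})$ as a closed subgroup of $\mathrm{M}_{2n}(\Z_{p_{k}})$) and the bookkeeping ensuring that the chosen unipotent matrices genuinely lie in the symplectic group, which is clear since transvections along the standard symplectic plane preserve the form. Alternatively, one could argue $p_{k}$-adic-Lie-theoretically: an abelian open subgroup would have abelian Lie algebra, but an open subgroup of $\mathrm{Sp}_{2n}(\Z_{p_{k}})$ has the same Lie algebra $\mathfrak{sp}_{2n}(\Q_{p_{k}})$ as the ambient group, which is simple --- in particular non-abelian --- for every $n \geq 1$. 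I would nevertheless prefer the elementary argument above, as it avoids invoking the structure theory of semisimple Lie algebras.
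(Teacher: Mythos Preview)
Your primary argument is correct and more elementary than the paper's. The paper argues purely $p_{k}$-adic-Lie-theoretically: an open subgroup $U \subset \mathrm{Sp}_{2n}(\Z_{p_{k}})$ has the same Lie algebra as the ambient group, namely $\mathfrak{sp}_{2n}(\Q_{p_{k}})$; if $U$ were abelian the Lie bracket would vanish, contradicting the fact that this bracket is nontrivial. This is exactly the alternative you sketch in your final paragraph, so you have in fact anticipated the paper's proof while choosing to lead with a different one. Your explicit-matrix argument has the virtue of being completely self-contained --- no appeal to the Lie functor or to the identification of the Lie algebra of a closed subgroup --- whereas the paper's approach is shorter and fits more naturally with the surrounding dimension-counting via Lie groups in the proof of Theorem~\ref{non-normality when d_{k} is odd}. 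One small point of care in your write-up: the phrase ``the first pair of standard basis vectors'' presupposes a convention in which $e_{1}, e_{2}$ form a hyperbolic pair; to be safe you might say ``a hyperbolic pair in the standard symplectic basis'' so the embedding $\mathrm{SL}_{2} \hookrightarrow \mathrm{Sp}_{2n}$ is unambiguous regardless of the reader's convention.
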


\begin{proof}
    Let $U$ be an open subgroup of $\mathrm{Sp}_{2n}(\Z_{p_k})$.
Since $\mathrm{Sp}_{2n}(\Z_{p_k})$ is a compact open subgroup of the $p_k$-adic Lie group $\mathrm{Sp}_{2n}(\Q_{p_k})$, it naturally carries a structure of $p_k$-adic Lie group and, in particular, is profinite.
In general, for a $p_k$-adic Lie group $X$, the Lie algebra structure defined on the tangent space at the identity element of $X$ is naturally identified with the Lie algebra structure defined on the tangent space at the identity element of any open subgroup of $X$.
Thus, the Lie algebra associated with $\mathrm{Sp}_{2n}(\Z_{p_k})$ is naturally identified with the Lie algebra associated with $U$.
We prove Lemma \ref{Sp is not virtually abelian} by contradiction.
Suppose that $U$ is abelian.
Then the Lie bracket on the corresponding Lie algebra is trivial.
However, this contradicts the fact that the Lie bracket on the Lie algebra $\mathfrak{sp}_{2n}(\Q_{p_k})$, which is associated with $\mathrm{Sp}_{2n}(\Z_{p_k})$, is nontrivial.
This completes the proof of Lemma \ref{Sp is not virtually abelian}.
\end{proof}

Suppose that $d_{k} \geq 3$. 
Let $g$ be defined as follows:  
\[
g \stackrel{\mathrm{def}}{=} 
\begin{cases}
\frac{d_k - 1}{2}, & \text{if } d_k \text{ is odd}, \\
\frac{d_k - 2}{2}, & \text{if } d_k \text{ is even}.
\end{cases}
\] 

Let $S$ be a closed orientable surface of genus $g (\geq 1)$.
Furthermore, let $P$ be a point on $S$.
Then it is well-known that there exists an isomorphism
    \begin{align*}
        \pi_{1}(S \setminus\{P\}) \stackrel{\sim}{\longrightarrow} \langle a_{1},b_{1},\ldots,a_{g},b_{g},c\ |\ [a_{1},b_{1}]\cdots[a_{g},b_{g}]c=1 \rangle,
    \end{align*}
    which is induced by standard generating loops of $S \setminus {P}$ [cf., e.g., \cite{18}, the discussion following Proposition 1.26 and related arguments].
    Define $\delta\stackrel{\mathrm{def}}{=}[a_{1},b_{1}] \cdots [a_{g},b_{g}]=c^{-1}$ and $\mathrm{Aut}^{\delta}(\pi_1(S \setminus \{P\})) \subset \Aut(\pi_{1}(S \setminus \{P\}))$ to be the subgroup of automorphisms of $\pi_{1}(S \setminus \{P\})$ that fix $\delta \in \pi_{1}(S \setminus \{P\})$.
    For each integer $i$ satisfying $1 \leq i \leq g$, 
we assign
\begin{align*}
    a_{i} \mapsto \left\{
\begin{aligned}
&x_{2i+1}\ (\text{if $d_{k}$ is even})\\
&x_{2i}\  (\text{if $d_{k}$ is odd}),
\end{aligned}
\right.\ 
b_{i} \mapsto \left\{
\begin{aligned}
&x_{2i+2}\ (\text{if $d_{k}$ is even})\\
&x_{2i+1}\  (\text{if $d_{k}$ is odd}).
\end{aligned}
\right.\ 
\end{align*}
    It follows from Theorem \ref{generator and relation of group of MLF-type} that, under this assignment, automorphism $\varphi$ of $\pi_{1}(S \setminus \{P\})$ that fixes $\delta$ induces an automorphism of $G_{k}$, which, by abuse of notation, we denote by $\varphi$, as follows:
    As for the generators other than the $x_i$ corresponding to $a_i$ and $b_i$, we define $\varphi$ according to the parity of $d_k$ as follows:
\begin{itemize}
    \item If $d_k$ is odd, then
    \[
        \varphi(\sigma) = \sigma,\quad \varphi(\tau) = \tau,\quad \varphi(x_0) = x_0,\quad \varphi(x_1) = x_1.
    \]
    \item If $d_k$ is even, then
    \[
        \varphi(\sigma) = \sigma,\quad \varphi(\tau) = \tau,\quad \varphi(x_0) = x_0,\quad \varphi(x_1) = x_1,\quad \varphi(x_2) = x_2.
    \]
\end{itemize}

    Let $\Mod(S \setminus\{P\})$ be the mapping class group of $S \setminus\{P\}$ [see \cite{16}, the beginning of Chapter 2].
    Consider the natural homomorphism $\Mod(S \setminus\{P\}) \to \Out(\pi_{1}(S \setminus \{P\}))$.  
It is well-known that the natural map $\Mod(S \setminus \{P\}) \to \Out(\pi_{1}(S \setminus \{P\}))$ is injective, and that there exists a lifting in $\Aut(\pi_{1}(S \setminus \{P\}))$ of the image of each element of $\Mod(S \setminus \{P\})$ which fixes $\delta$ [cf.\ \cite{16}, Theorem 4.1, and the remark following \cite{16}, Theorem 4.1].
Throughout the remainder of the present paper, we regard $\Mod(S \setminus \{P\})$ as a subgroup of $\Out(\pi_{1}(S \setminus \{P\}))$ via the natural injective group homomorphism $\Mod(S \setminus \{P\}) \hookrightarrow \Out(\pi_{1}(S \setminus \{P\}))$.
Let $\varphi$ be an element of $\Mod(S \setminus \{P\})$.
As mentioned above, there exists a lifting of $\varphi$ to $\Aut(\pi_{1}(S \setminus \{P\}))$ which fixes $\delta$.
Hence, each such lifting induces an automorphism of $G_{k}$.

 We obtain a map of sets
\begin{align*}
   \rho \colon \Mod(S \setminus \{P\}) \to \Out(G_k),
\end{align*}
which is not necessarily a homomorphism of groups, by considering the diagram of sets

\[
\begin{tikzcd}[row sep=large, column sep=large]
& \mathrm{Aut}^{\delta}(\pi_1(S \setminus \{P\})) \arrow[r, hook] \arrow[d, hook] & \mathrm{Aut}(G_k) \arrow[d] \\
& \mathrm{Aut}(\pi_1(S \setminus \{P\})) \arrow[d] & \mathrm{Out}(G_k) \\
\Mod(S \setminus \{P\}) \arrow[r, hook] & \mathrm{Out}(\pi_1(S \setminus \{P\})). &
\end{tikzcd}
\]
Write $D \subset \Out(G_k)$ for the closed subgroup of $\Out(G_k)$ that is topologically generated by the image of $\rho$.

\begin{rem}\label{mapping class group meaning}
   Let us consider the automorphisms $\varphi_{i}$, $\varphi^{\prime}_{i}$, and $\varphi^{\prime\prime}_{i}$ of $G_{k}$, which are defined in the proof of Theorem \ref{determination of Ker(Tr)}.  
   Then we observe that, in light of the relation of $\Out(G_{k})$ with $\Mod(S \setminus \{P\})$ discussed above, for each integer $1 \leq i \leq g$, $\varphi_{i}$ corresponds to the Dehn twist determined by a longitude, $\varphi^{\prime}_{i}$ corresponds to the Dehn twist determined by a meridian, and, if $g \geq 2$, then the element $\varphi^{\prime\prime}_{i}$ corresponds to the Dehn twist determined by a loop spanning two adjacent holes of a torus [cf.\ \cite{14}, Lemma 7.3.4].
\end{rem}

To prove the main theorem, we introduce a classical result in topology.

\begin{thm}[\cite{16}]\label{The Symplectic Representation}
      Recall that the images $[a_{1}], [b_{1}],\ldots, [a_{g}], [b_{g}] \in \pi_{1}(S \setminus \{P\})^{\ab}$ of $a_{1},b_{1}, \ldots, a_{g},b_{g} \in \pi_{1}(S \setminus \{P\})$ form a basis of the free $\Z$-module $\pi_{1}(S \setminus \{P\})^{\ab}$ [cf.\ the canonical isomorphism $\pi_{1}(S \setminus \{P\})^{\ab} \stackrel{\sim}{\longrightarrow} H_{1}(S\setminus \{P\},\Z)$].
     Consider the representation
\begin{align*}
    \Mod(S \setminus\{P\}) \to \Out(\pi_{1}(S \setminus \{P\})) \to \Aut(\pi_{1}(S \setminus \{P\})^{\ab})\stackrel{\sim}{\longrightarrow} \GL_{2g}(\Z),
\end{align*}
where the third arrow is an isomorphism induced by the basis of $\pi_{1}(S \setminus \{P\})^{\ab}$ mentioned above.
Then the image of this map is $\mathrm{Sp}_{2g}(\Z)$.
\end{thm}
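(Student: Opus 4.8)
The plan is to establish the two inclusions separately: that the image of the displayed representation is contained in $\mathrm{Sp}_{2g}(\Z)$, and conversely that it contains $\mathrm{Sp}_{2g}(\Z)$. Since this is a classical fact, I will only sketch the standard argument; see \cite{16} for the details.

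For the inclusion ``image $\subseteq \mathrm{Sp}_{2g}(\Z)$'', I would first record that $\pi_{1}(S \setminus \{P\})^{\ab} \cong H_{1}(S \setminus \{P\}, \Z)$ is free of rank $2g$ with basis $[a_{1}],[b_{1}],\ldots,[a_{g}],[b_{g}]$ --- indeed $S \setminus \{P\}$ is homotopy equivalent to a wedge of $2g$ circles, and after eliminating $c$ the presentation becomes free on the $a_{i},b_{i}$. I would then observe that the algebraic intersection pairing on this group is, in the given basis, the standard symplectic form: each pair $a_{i},b_{i}$ is represented by simple closed curves meeting transversally in one point, the remaining pairs of standard curves may be taken disjoint, and the puncture imposes no extra relation because $\delta = [a_{1},b_{1}]\cdots[a_{g},b_{g}]$ is a product of commutators, hence null-homologous. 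Since $\Mod(S \setminus \{P\})$ consists of isotopy classes of orientation-preserving homeomorphisms, every element preserves this intersection form exactly, so its image in $\GL_{2g}(\Z)$ lies in the isometry group of the form, which is by definition $\mathrm{Sp}_{2g}(\Z)$.

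For the reverse inclusion, I would use the transvection formula for Dehn twists: for a simple closed curve $\gamma$ on $S \setminus \{P\}$ with class $v = [\gamma]$, the twist $T_{\gamma} \in \Mod(S \setminus \{P\})$ acts on $H_{1}(S \setminus \{P\}, \Z)$ by the symplectic transvection $x \mapsto x + \langle x, v\rangle v$. I would then invoke the classical generation of $\mathrm{Sp}_{2g}(\Z)$ by elementary symplectic transvections: $\mathrm{Sp}_{2g}(\Z)$ is generated by the transvections $T_{v}$ attached to a suitable finite list of primitive vectors --- for instance the $[a_{i}]$, the $[b_{i}]$, and classes of the form $[a_{i}] \pm [a_{i+1}]$ --- each of which is realized by an embedded simple closed curve that can be drawn on $S$ avoiding $P$. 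Hence each such generating transvection is the image of a genuine Dehn twist in $\Mod(S \setminus \{P\})$, and the image of the representation therefore contains all of $\mathrm{Sp}_{2g}(\Z)$. Alternatively, one can reduce to the closed-surface case via the Birman exact sequence $1 \to \pi_{1}(S,P) \to \Mod(S \setminus \{P\}) \to \Mod(S) \to 1$, noting that each point-pushing class $\mathrm{Push}(\alpha)$ is a product $T_{\alpha_{1}}T_{\alpha_{2}}^{-1}$ of twists about two curves homologous to $[\alpha]$ and so acts trivially on $H_{1}$; thus the symplectic representation factors through $\Mod(S)$ with unchanged image.

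The main obstacle is the surjectivity half: one must fix an explicit finite generating set of $\mathrm{Sp}_{2g}(\Z)$ consisting of transvections and verify that every vector occurring in it is represented by an embedded simple closed curve on the \emph{once-punctured} surface $S \setminus \{P\}$, so that the associated Dehn twist genuinely lies in $\Mod(S \setminus \{P\})$ rather than in some abstract overgroup. Both ingredients are standard --- the former is elementary symplectic linear algebra over $\Z$, the latter a routine exercise in drawing curves --- so the theorem follows as stated in \cite{16}.
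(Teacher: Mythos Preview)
Your proposal is correct and aligned with the paper's approach: the paper's own proof is merely the one-line citation ``this assertion follows from the naturality of the isomorphism $\pi_{1}(S \setminus \{P\})^{\ab} \stackrel{\sim}{\longrightarrow} H_{1}(S \setminus \{P\},\Z)$; \cite{16}, Theorem 6.4; and the remark following \cite{16}, Theorem 6.4'', and your sketch (intersection form preserved, surjectivity via Dehn-twist transvections, reduction to the closed surface through the Birman sequence) is exactly the content of that reference. There is nothing to correct.
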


\begin{proof}
    This assertion follows from the naturality of the isomorphism $\pi_{1}(S \setminus \{P\})^{\ab} \stackrel{\sim}{\longrightarrow} H_{1}(S \setminus \{P\},\Z)$; \cite{16}, Theorem 6.4; and the remark following \cite{16}, Theorem 6.4.
\end{proof}

We now prove the non-normality of field-theoretic subgroups in the case where $d_{k}>1$ is odd.
This result [cf.\ Theorem \ref{non-normality when d_{k} is odd} below] combined with Theorem \ref{non-normality when d_{k} is even} gives Theorem I [cf. Introduction of the present paper].
In the remainder of the present paper, unless otherwise specified, we assume that $d_{k}>1$ is odd.

In the following proof, we regard $\mathrm{Sp}_{2g}(\Z_{p_{k}})$ as a subgroup of $\GL_{d_{k}}(\Q_{p_{k}})$ via the injective group homomorphism which is defined by
\[
    A \mapsto \begin{bmatrix}
    1 & 0_{1 \times 2g} \\
    0_{2g \times 1} & A
    \end{bmatrix},
\]
where $0_{1 \times 2g}$ [respectively, $0_{2g \times 1}$] denotes the $1 \times 2g$ matrix [respectively, the $2g \times 1$ matrix] whose entries are all $0$.

\begin{rem}\label{Def of Phi}
    We write $\Phi$ for the group homomorphism $\Out(G_k) \to \Aut_{\Q_{p_{k}}}(k_{+}) \stackrel{\sim}{\longrightarrow} \GL_{d_k}(\Q_{p_k})$ induced by the mono-anabelian reconstruction algorithm.
    Here, we note that it is not clear to the author at the time of writing whether or not it is possible to choose suitable liftings so that $\rho$ becomes a group homomorphism.
    On the other hand, it follows from the definitions of the mono-anabelian reconstruction algorithm and of $\rho$ that the following diagram commutes:
    \[
\begin{tikzcd}
\Out(G_{k}) \arrow[r, "\Phi"] & \GL_{d_{k}}(\Q_{p_{k}}) \\
\Mod(S \setminus\{P\}) \arrow[r,] \arrow[u, "\rho"] & \mathrm{Sp}_{2g}(\Z) \arrow[u, "\subset"],
\end{tikzcd}
\]
where the lower horizontal arrow is the surjective homomorphism discussed in Theorem \ref{The Symplectic Representation}.
What is crucial in the proof of Theorem \ref{non-normality when d_{k} is odd} below is the commutativity of this diagram and the surjectivity of $\Mod(S \setminus {P}) \to \mathrm{Sp}_{2g}(\Z)$.
\end{rem}

In what follows, for a $p_{k}$-adic Lie group $X$, we write $\dim(X)$ for the dimension of $X$ as a $p_{k}$-adic Lie group.

\begin{thm}\label{non-normality when d_{k} is odd}
     Suppose that $k$ is an absolutely Galois MLF, that $p_{k}$ is odd, and that $d_{k}>1$ is odd.
    Then the set of $\Out(G_{k})$-conjugates of the subgroup $\Aut(k) \subset \Out(G_{k})$ is infinite.
    In particular, $\Aut(k) \subset \Out(G_{k})$ is not a normal subgroup of $\Out(G_{k})$.
\end{thm}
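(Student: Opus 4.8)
The plan is to argue by contradiction, comparing the image under $\Phi$ of the ``mapping class group'' subgroup $D \subset \Out(G_{k})$ with the image under $\Phi$ of the centralizer of $\Aut(k)$. Suppose the set of $\Out(G_{k})$-conjugates of $\Aut(k)$ were finite; equivalently, $N \stackrel{\mathrm{def}}{=} N_{\Out(G_{k})}(\Aut(k))$ has finite index in $\Out(G_{k})$. Since $\Aut(k)$ is finite, the centralizer $Z \stackrel{\mathrm{def}}{=} Z_{\Out(G_{k})}(\Aut(k))$ is a closed subgroup of $\Out(G_{k})$ (a finite intersection of centralizers of single elements) and is normal in $N$ with $[N:Z] \le |\Aut(\Aut(k))| < \infty$; hence $Z$ is a closed subgroup of finite index, i.e.\ an open subgroup, of $\Out(G_{k})$. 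The whole point will be to show that $Z$ is ``too small'' to be open.

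First I would compute $\Phi(D)$. By the commutative square of Remark~\ref{Def of Phi} together with the surjectivity in Theorem~\ref{The Symplectic Representation}, the set $\Phi(\rho(\Mod(S \setminus \{P\})))$ equals $\mathrm{Sp}_{2g}(\Z)$, viewed inside $\GL_{d_{k}}(\Q_{p_{k}})$ via the fixed block embedding. Since $\Phi$ is a group homomorphism, it carries the subgroup of $\Out(G_{k})$ generated by $\rho(\Mod(S \setminus \{P\}))$ onto the subgroup of $\GL_{d_{k}}(\Q_{p_{k}})$ generated by $\mathrm{Sp}_{2g}(\Z)$, namely $\mathrm{Sp}_{2g}(\Z)$ itself; and since $D$ is the closure of the former subgroup while $\Phi$ is continuous and closed by Lemma~\ref{closed map}, it follows that $\Phi(D)$ is the $p_{k}$-adic closure of $\mathrm{Sp}_{2g}(\Z)$ in $\GL_{d_{k}}(\Q_{p_{k}})$. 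As $\mathrm{Sp}_{2g}(\Z_{p_{k}})$ is compact and $\mathrm{Sp}_{2g}(\Z) \to \mathrm{Sp}_{2g}(\Z/p_{k}^{n}\Z)$ is surjective for every $n \ge 1$ (strong approximation for the simply connected group $\mathrm{Sp}_{2g}$), this closure is exactly $\mathrm{Sp}_{2g}(\Z_{p_{k}})$; thus $\Phi(D) = \mathrm{Sp}_{2g}(\Z_{p_{k}})$, a $p_{k}$-adic Lie group of dimension $g(2g+1) = \tfrac{1}{2}d_{k}(d_{k}-1)$.

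Next I would bound $\Phi(D \cap Z)$ from two sides. On the one hand, since $Z$ is open in $\Out(G_{k})$ and $D$ is profinite, $D \cap Z$ is an open, hence finite-index, subgroup of $D$; as $\Phi$ is closed, $\Phi(D \cap Z)$ is a closed subgroup of $\Phi(D) = \mathrm{Sp}_{2g}(\Z_{p_{k}})$ of finite index, therefore an open subgroup, so $\dim \Phi(D \cap Z) = g(2g+1)$. On the other hand, every element of $Z$ commutes in $\Out(G_{k})$ with every element of $\Aut(k)$, so $\Phi(D \cap Z) \subseteq \Phi(Z) \subseteq Z_{\GL_{d_{k}}(\Q_{p_{k}})}(H)$, where $H \stackrel{\mathrm{def}}{=} \Aut(k) \subset \GL_{d_{k}}(\Q_{p_{k}})$; by Lemma~\ref{the centralizer of Aut(k) in GL}, (2) and (3), this centralizer is isomorphic, as a $p_{k}$-adic Lie group, to $(\Q_{p_{k}}[H])^{\times}$ and has dimension at most $d_{k}$. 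Comparing the two bounds gives $g(2g+1) = \tfrac{1}{2}d_{k}(d_{k}-1) \le d_{k}$, forcing $d_{k} \le 3$; since $d_{k} > 1$ is odd, only $d_{k} = 3$ (so $g = 1$) survives.

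Finally I would eliminate $d_{k} = 3$, where the dimension count is inconclusive and one must instead use non-virtual-abelianness. Here $\Phi(D \cap Z)$ is an open — in particular finite-index — subgroup of $\mathrm{Sp}_{2}(\Z_{p_{k}}) = \mathrm{SL}_{2}(\Z_{p_{k}})$, hence non-abelian by Lemma~\ref{Sp is not virtually abelian}; but $\Phi(D \cap Z) \subseteq Z_{\GL_{3}(\Q_{p_{k}})}(H) \cong (\Q_{p_{k}}[H])^{\times}$ with $|H| = d_{k} = 3$, so $H$ is cyclic of order $3$, the ring $\Q_{p_{k}}[H]$ is commutative, and $(\Q_{p_{k}}[H])^{\times}$ is abelian — a contradiction. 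Hence $N_{\Out(G_{k})}(\Aut(k))$ has infinite index, so the set of $\Out(G_{k})$-conjugates of $\Aut(k)$ is infinite and, in particular, $\Aut(k) \subset \Out(G_{k})$ is not normal. I expect the crux of the argument to be the identification $\Phi(D) = \mathrm{Sp}_{2g}(\Z_{p_{k}})$: one must pass from the merely set-theoretic map $\rho$ to the homomorphism $\Phi$ so that the surjectivity of Theorem~\ref{The Symplectic Representation} transfers to a statement about $p_{k}$-adic closures, exploit the closedness in Lemma~\ref{closed map} both there and to make $\Phi(D \cap Z)$ open, and invoke strong approximation for $\mathrm{Sp}_{2g}$; granted that, the dimension comparison and the $d_{k} = 3$ dichotomy are routine.
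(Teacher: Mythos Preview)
Your proposal is correct and follows essentially the same approach as the paper: reduce via $[N:Z] < \infty$ to showing $Z_{\Out(G_k)}(\Aut(k))$ has infinite index, identify $\Phi(D) = \mathrm{Sp}_{2g}(\Z_{p_k})$, contrast its dimension $g(2g+1)$ with the bound $\dim Z_{\GL_{d_k}(\Q_{p_k})}(\Aut(k)) \le d_k$ from Lemma~\ref{the centralizer of Aut(k) in GL}, and treat the residual case $d_k = 3$ via Lemma~\ref{Sp is not virtually abelian} and the commutativity of $\Q_{p_k}[H]$.

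The only cosmetic difference is that the paper compares $I \stackrel{\mathrm{def}}{=} \Phi(\Out(G_k))$ with $\Phi(Z_{\Out(G_k)}(\Aut(k)))$ directly, whereas you first intersect with $D$ and compare $\Phi(D)$ with $\Phi(D \cap Z)$; both variants work, and yours has the mild advantage of not needing to name $I$. You also spell out the strong approximation step (density of $\mathrm{Sp}_{2g}(\Z)$ in $\mathrm{Sp}_{2g}(\Z_{p_k})$) that the paper leaves implicit when asserting $\Phi(D) = \mathrm{Sp}_{2g}(\Z_{p_k})$.
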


\begin{proof}
    It follows from the existence of the natural exact sequence
    \begin{align*}
        1 \to Z_{\Out(G_{k})}(\Aut(k)) \to N_{\Out(G_{k})}(\Aut(k)) \to \Aut(\Aut(k)),
    \end{align*}
    together with the fact that $\Aut(k)$ is a finite group, that the subgroup $Z_{\Out(G_{k})}(\Aut(k)) \subset N_{\Out(G_{k})}(\Aut(k))$ is of finite index.
    Thus, in order to show that the subgroup
    \begin{align*}
        N_{\Out(G_k)}(\Aut(k)) \subset \Out(G_k)
    \end{align*}
    is not of finite index, it suffices to show that the subgroup $Z_{\Out(G_k)}(\Aut(k)) \subset \Out(G_k)$ is not of finite index.
    
    We first consider the case where $g \geq 2$.
It follows from Lemma \ref{closed map} that the map $\Phi$ [cf. Remark \ref{Def of Phi}] is a closed map.
Thus, both $I \stackrel{\mathrm{def}}{=} \Phi(\Out(G_k))$ and $Z \stackrel{\mathrm{def}}{=} \Phi(Z_{\Out(G_k)}(\Aut(k)))$ are closed subgroups of $\GL_{d_k}(\Q_{p_k})$.
In particular, it follows from \cite{17}, Theorem 9.6, that both $I$ and $Z$ naturally admit structures of $p_k$-adic Lie groups.
Furthermore, it follows from Remark \ref{Def of Phi} that the image of $D$ [cf. the discussion preceding Remark \ref{mapping class group meaning}] by $\Phi$ is equal to $\mathrm{Sp}_{2g}(\Z_{p_k})$, which implies that $\mathrm{Sp}_{2g}(\Z_{p_k}) \subset I$.
As is well-known, since the dimension of $\mathrm{Sp}_{2g}(\Z_{p_k})$ as a $p_k$-adic Lie group is $2g^{2} + g$, it follows that $\dim(I) \geq 2g^2 + g$.

Let us now estimate the dimension of $Z$ as a $p_k$-adic Lie group.
By definition, we have $Z \subset Z_{\GL_{d_{k}}(\Q_{p_{k}})}(\Aut(k))$.
Thus, it follows from \ref{the centralizer of Aut(k) in GL}, (3), that the  dimension of $Z_{\GL_{d_{k}}(\Q_{p_{k}})}(\Aut(k))$ as a $p_k$-adic Lie group is at most $2g + 1$.
Hence, we obtain $\dim(Z) \leq 2g + 1$.
Since $g \geq 2$, we have $2g^2 + g > 2g + 1$.
Therefore, the dimension comparison shows that $Z$ is not an open subgroup of $I$.
Indeed, if $Z$ were open in $I$, then $\dim(Z) = \dim(I)$ would hold, which contradicts the inequality $2g^2 + g > 2g + 1$.
On the other hand, since $Z$ is a closed subgroup of $I$ [cf.\ Lemma \ref{closed map}], it follows that $Z$ is not of finite index in $I$.
In particular, we conclude that the subgroup $N_{\Out(G_k)}(\Aut(k)) \subset \Out(G_k)$ is not of finite index.

Finally, we consider the case where $g = 1$, i.e., $d_{k} = 3$.
Suppose, for the sake of contradiction, that $Z_{\Out(G_k)}(\Aut(k)) \subset \Out(G_k)$ is a subgroup of finite index.
Then since the group $\Aut(k) = \Gal(k/k^{(d=1)})$ is abelian and $Z_{\GL_{d_{k}}(\Q_{p_{k}})}(\Aut(k)) \cong (\Q_{p_{k}}[\Aut(k)])^{\times}$ [cf.\ Lemma \ref{the centralizer of Aut(k) in GL}, (2)], it follows that $Z_{\GL_{d_{k}}(\Q_{p_{k}})}(\Aut(k))$ is also abelian.
Thus, the profinite group $I$ is virtually abelian, i.e., has an abelian open subgroup.
Since any closed subgroup of a virtually abelian profinite group is a virtually abelian profinite group, it follows that the closed subgroup $\mathrm{Sp}_{2g}(\Z_{p_{k}}) \subset I$ is a virtually abelian profinite group.
However, this contradicts Lemma \ref{Sp is not virtually abelian}.
This completes the proof of Theorem \ref{non-normality when d_{k} is odd}.
\end{proof}

\begin{rem}\label{summary of main result}
  Suppose that $k$ is absolutely Galois, that $p_{k}$ is odd, and that $d_{k} \geq 2$.
  Then it follows from Theorem \ref{non-normality when d_{k} is even} and Theorem \ref{non-normality when d_{k} is odd} that the subgroup $\Aut (k) \subset \Out(G_{k})$ is not normal.
  Here, we note that this non-normality implies that the subgroup $\Aut (k) \subset \Out(G_{k})$ cannot be reconstructed functorially and group-theoretically from $G_{k}$.
  Indeed, suppose to the contrary that such a mono-anabelian reconstruction algorithm exists.
  Then it would imply that, for any automorphism $\alpha \colon G_{k} \stackrel{\sim}{\longrightarrow} G_{k}$ of the group $G_{k}$, we have $\Out(\alpha)(\Aut(k))=\Aut(k)$, where we write $\Out(\alpha) \colon \Out(G_{k}) \stackrel{\sim}{\longrightarrow} \Out(G_{k})$ for the automorphism of the group $\Out(G_{k})$ induced from $\alpha$.
  On the other hand, it follows formally that the automorphism $\Out(\alpha)$ coincides with the inner automorphism determined by the image of $\alpha$ by the natural surjective group homomorphism $\Aut(G_{k}) \to \Out(G_{k})$.
  This implies that $\Aut(k) \subset \Out(G_{k})$ is normal, which contradicts our non-normality result.
\end{rem}

\begin{rem}\label{odd-case}
Observe that one verifies immediately that the method applied in the proof of Theorem \ref{non-normality when d_{k} is odd} [i.e., in the odd degree case] can also be applied to prove Theorem \ref{non-normality when d_{k} is even} [i.e., in the even degree case].
We leave the routine details to the interested reader.
On the other hand, in contrast to the proof of Theorem \ref{non-normality when d_{k} is even}, the proof of Theorem \ref{non-normality when d_{k} is odd} does not enable us to explicitly construct elements of $\Out(G_k) \setminus N_{\Out(G_k)}(\Aut(k))$.
That is to say, although its scope is more limited, the method applied in the proof of Theorem \ref{non-normality when d_{k} is even} has the advantage of providing explicit elements of the desired type.
Indeed, Theorem \ref{Aut-intristic Hodge-Tate} was obtained through such an observation.
\end{rem}

\begin{rem}
    When $p_{k}=2$ and $\sqrt{-1} \in k$, as in the case where $p_{k}$ is odd, an explicit description by generators and relations for the topological group $G_{k}$ [as discussed in Theorem \ref{generator and relation of group of MLF-type}] is known.
    In fact, in \cite{21}, Nishio proved similar results to Theorem \ref{Aut-intristic Hodge-Tate} and Theorem \ref{non-normality when d_{k} is even} in the situation where $k$ is absolutely abelian, and $\sqrt{-1} \in k$ by means of the explicit description.
    On the other hand, when $p_{k}=2$ and $\sqrt{-1} \notin k$, a similar explicit description by generators and relations for the topological group $G_{k}$ is not known.
    In particular, it is not clear to the author at the time of writing whether or not the subgroup $\Aut(k) \subset \Out(G_{k})$ is a normal subgroup in the case where $p_{k}=2$ and $\sqrt{-1} \notin k$, even when $k$ is absolutely Galois.
\end{rem}

\begin{rem}
    If $d_{k} = 1$, then it is easy to see that $\Aut(k) = 1$.
    In particular, the subgroup $\Aut(k) \subset \Out(G_{k})$ is a normal subgroup.
    On the other hand, when $k$ is not absolutely Galois and $\Aut(k) \neq 1$, the non-normality of the field-theoretic subgroup cannot be established by the method developed in the present paper.
   In a joint work in preparation by the author and Tsujimura, we are studying similar results to the various results obtained in the present paper under the assumption that $\Aut(k)\neq 1$ without assuming that $k$ is absolutely Galois.
    Moreover, we are also studying analogous results for positive-characteristic local fields and, furthermore, for certain special henselian valued fields that are not necessarily local fields.
\end{rem}

\begin{rem}
    Let $\alpha \in \Out(G_{k})$.
    As discussed in Remark \ref{ring theoretic deta}, if  $ \alpha \in N_{\Out(G_{k})}(\Aut(k))$, then it preserves certain objects that essentially depend on the ring structure of $k$.
    On the other hand, by Theorem \ref{non-normality when d_{k} is even} and Theorem \ref{non-normality when d_{k} is odd}, if $k$ is absolutely Galois and $d_{k}>1$, then the subgroup $N_{\Out(G_{k})}(\Aut(k)) \subset \Out(G_{k})$ is a proper subgroup.
    In light of these observations, it is natural to ask whether or not the elements of the subgroup $N_{\Out(G_{k})}(\Aut(k)) \subset \Out(G_{k})$ can be characterized, as in Theorem \ref{characterization of Aut k} [cf.\ Introduction of the present paper], as those that preserve suitable ``ring-theoretic data''.
   However, at the time of writing the present paper, we have not yet obtained even a candidate for an appropriate characterization.
\end{rem}

\section*{Acknowledgments}

I would like to express my sincere gratitude to Professor Yuichiro Hoshi, Professor Shinichi Mochizuki, Professor Shota Tsujimura, and Professor Akio Tamagawa for their numerous insightful discussions and warm encouragement.
I am also grateful to Reiya Tachihara for carefully reading my drafts and providing invaluable comments.
I am especially thankful to Professor Yuichiro Hoshi for suggesting the theme of the present paper and for his detailed feedback on the drafts, and to Professor Akio Tamagawa for his remarks on a critical idea in the proof of Theorem \ref{non-normality when d_{k} is odd}.
Finally, I would like to express my heartfelt appreciation to my family, significant other, and friends for their unwavering support.

\end{document}